\pdfpageattr{/Group <</S /Transparency /I true /CS /DeviceRGB>>}
\documentclass[letterpaper, twoside, 10pt, leqno]{amsart}

\usepackage{xcolor}
\usepackage{amssymb}
\usepackage{mathtools}
\usepackage{mathrsfs}
\usepackage{tikz}
    \usetikzlibrary{cd}
\usepackage{enumitem}
    \setlist[enumerate]{label={\textnormal{(\alph*)~\,}}, ref={(\alph*)}, nosep, align=left, left={\parindent}, labelsep=0pt, widest=b, rightmargin=0pt}
\usepackage[alphabetic, nobysame]{amsrefs}
\usepackage[colorlinks=true, allcolors=cyan!50!blue]{hyperref}
\usepackage[capitalise, nameinlink, noabbrev]{cleveref}
    
\usepackage[hyphenbreaks]{breakurl}

\allowdisplaybreaks
\numberwithin{equation}{section}

\theoremstyle{definition}
    \newtheorem{pdef}{Definition}[section]
\theoremstyle{plain}
    \newtheorem{pthrm}{Theorem}[section]
    \newtheorem{pmain}{Theorem}
    \newtheorem{pmaincor}[pmain]{Corollary}
    \newtheorem{plem}[pthrm]{Lemma}
    \newtheorem{pcor}[pthrm]{Corollary}
    \newtheorem{pprop}[pthrm]{Proposition}
\theoremstyle{remark}
    \newtheorem{prmrk}{Remark}[section]
    \newtheorem{pwarn}{Warning}[section]

\crefname{section}{Section}{Sections}
\crefname{subsection}{Subsection}{Subsections}
\crefname{pdef}{Definition}{Definitions}
\crefname{pthrm}{Theorem}{Theorems}
\crefname{pmain}{Theorem}{Theorems}
\crefname{pmaincor}{Corollary}{Corollaries}
\crefname{plem}{Lemma}{Lemmas}
\crefname{pcor}{Corollary}{Corollaries}
\crefname{pprop}{Proposition}{Propositions}
\crefname{prmrk}{Remark}{Remarks}
\crefname{pwarn}{Warning}{Warnings}
\AddToHook{env/pdef/begin}{\crefalias{pdef}{pdef}}
\AddToHook{env/pthrm/begin}{\crefalias{pthrm}{pthrm}}
\AddToHook{env/pmain/begin}{\crefalias{pmain}{pmain}}
\AddToHook{env/pmaincor/begin}{\crefalias{pmain}{pmaincor}}
\AddToHook{env/plem/begin}{\crefalias{pthrm}{plem}}
\AddToHook{env/pcor/begin}{\crefalias{pthrm}{pcor}}
\AddToHook{env/pprop/begin}{\crefalias{pthrm}{pprop}}
\AddToHook{env/prmrk/begin}{\crefalias{prmrk}{prmrk}}
\AddToHook{env/pwarn/begin}{\crefalias{pwarn}{pwarn}}

\newcommand{\injto}{\hookrightarrow}
\newcommand{\surjto}{\twoheadrightarrow}

\newcommand{\isoto}[1][\sim]{\stackrel{#1}{\to}}

\newcommand{\del}{\nabla}

\newcommand{\Lap}{\triangle}

\DeclareMathOperator{\Hess}{Hess}

\newcommand{\vol}{\mathrm{vol}}

\newcommand{\Ric}{\mathrm{Ric}}
\DeclareMathOperator{\tr}{tr}

\DeclareMathOperator{\Grad}{grad}
\DeclareMathOperator{\Div}{div}
\newcommand{\II}{\mathrm{I\!\!\:I}}

\begin{document}

\title[A Ruh-Vilms theorem in Weitzenb{\"o}ck geometry]{A Ruh-Vilms theorem for hypersurfaces \\ in Weitzenb{\"o}ck geometry}
\author[Dongha Lee]{Dongha Lee}
\address{School of Mathematics, Korea Institute for Advanced Study, 85, Hoe\-gi-ro, Dong\-dae\-mun-gu, Seoul, Republic of Korea}
\email{leejydh97@kias.re.kr}
\date{\today}
\subjclass[2020]{53C43, 53B05}
\keywords{Gauss map, harmonic map, Weitzenb{\"o}ck geometry}
\begin{abstract}
A well-known theorem by Ruh and Vilms states that the Laplacian of the Gauss map for a smooth immersion into Euclidean space is given by the covariant derivative of the mean curvature vector field. For hypersurfaces, this implies that the Gauss map is harmonic iff the mean curvature is constant. In this paper, we extend this result to hypersurfaces in Weit\-zen\-b{\"o}ck geometry. While Riemannian geometry corresponds to the curved geometry without torsion, Weit\-zen\-b{\"o}ck geometry is a flat geometry with torsion. They represent two opposite extremes of Riemann-Cartan geometry.
\end{abstract}
\maketitle

\section{Introduction}

A celebrated work by Ruh and Vilms~\cite{MR0259768} establishes that the Laplacian of the Gauss map associated with a smooth immersion into the Euclidean space coincides with the covariant derivative of the mean curvature vector field. For a connected oriented smooth hypersurface \( M \) immersed in \( \mathbb{R}^{n+1} \), this result implies that the Gauss map \( g \colon M \to \mathbb{S}^n \) is harmonic iff \( M \) has constant mean curvature (CMC). This significantly facilitates the study of CMC hypersurfaces in Euclidean space and has yielded a wide range of applications. There has been extensive and continuous research to extend Ruh and Vilms' result to various geometric settings such as other Gauss maps and different ambient spaces. Notable contributions include~\cite{MR0859957} for the spherical Gauss map, \cites{MR0829396,MR0879397} for the Gauss section, \cite{MR0978614} for certain Gauss maps in a general Riemannian setting, \cite{MR2002821} for ambient Lie groups, \cite{MR1993542} for the ambient \( 3 \)-sphere, \cites{MR2231651,MR3456159} for ambient symmetric spaces, \cite{MR2343386} for the ambient \( \mathbb{H}^2 \times \mathbb{R} \), \cite{MR2763624} for the ambient anti-de Sitter \( 3 \)-space, \cite{MR4132000} for the ambient \( 7 \)-sphere, \cite{MR4159849} for the ambient complex projective plane, and~\cite{MR4795879} for the ambient para-complex projective plane.

The main result of the present paper is an extension of Ruh and Vilms' theorem to hypersurfaces in \emph{Weit\-zen\-b{\"o}ck geometry}, which is a flat geometry with torsion. Connections with torsion were pioneered and profoundly studied by Cartan~\cites{MR1509253,MR1509255,MR1509263}. His seminal work laid the foundation for \emph{Einstein-Cartan theory} in theoretical physics and \emph{Riemann-Cartan geometry} in mathematics. To be precise, a \emph{Riemann-Cartan manifold} is a smooth manifold endowed with a Riemannian metric and a metric-compatible connection, which may have nonzero \emph{torsion} and \emph{curvature}. A \emph{Riemannian manifold} with the Levi-Civita connection corresponds to the case with zero torsion, while a \emph{Weit\-zen\-b{\"o}ck manifold} lies at the other extreme with zero curvature. It is a parallelized manifold equipped with the natural flat Riemann-Cartan structure induced by its parallelization. For more details about these geometries and historical background, see Section~1 of~\cite{MR3468462}, Section~2 of~\cite{MR5011406}, Section~2 of~\cite{MR2037618}, and Section~1 of~\cite{MR2866744}.

The standard framework for submanifolds in Riemannian geometry is naturally extended to the Riemann-Cartan setting, despite the presence of nonzero torsion. This extension was systematically established in~\cite{MR5011406}, providing the necessary groundwork for the discussion in this paper. A crucial difference from the Riemannian setting is that the second fundamental form fails to be symmetric in general. Indeed, it is symmetric exactly when the differential \( 2 \)-form \( \tau \) defined by the normal component of the ambient torsion vanishes (see~\eqref{E:tau1} and~\eqref{E:tau2}). In the case of surfaces, it turned out that the Hodge dual of this \( 2 \)-form \( \tau \) produces a smooth scalar field \( {\star} \tau \) that serves as an ``imaginary counterpart'' to the mean curvature \( H \). It was observed that the complex-valued geometric quantity \( H + \boldsymbol{i} {\star} \tau \) plays a central role in extending various results in minimal surface theory.

One of the key features of submanifolds in Weit\-zen\-b{\"o}ck geometry is that the notion of the Gauss map arises naturally and serves as a central tool. By virtue of the existence of a fixed ambient global smooth frame, it is possible to define the Gauss map of a hypersurface as the tuple of the coefficient functions of the unit normal vector field. Then it is a map from the hypersurface to the unit sphere, which directly extends the classical Gauss map in Euclidean geometry. This map is the primary object of study in this paper, and the main interest lies in its Laplacian and harmonicity. Based on this map, generalizations of several classical results are presented in~\cite{MR5011406}, including the well-known theorem that characterizes minimal surfaces in \( \mathbb{R}^3 \) in terms of the conformality of the Gauss map.

The sustained interest in Ruh and Vilms' theorem has led to the aforementioned extensions in a wide range of geometric settings. The significance of the present work does not lie merely in providing another formal generalization. For surfaces, the complex-valued quantity \( H + \boldsymbol{i} {\star} \tau \) studied in~\cite{MR5011406} reappears naturally in the context of the Laplacian of the Gauss map (\cref{T:B,C:B,C:C}), thereby revealing another geometric aspect. The relevance of this quantity is also supported by its essential role in the renormalization of the Chern-Simons invariant for convex-cocompact hyperbolic \( 3 \)-manifolds~\cite{LeeCS}. Moreover, the special case of parallel totally skew-symmetric torsion (\cref{T:D}) recovers the earlier result of~\cite{MR2002821} for hypersurfaces in Lie groups with bi-invariant metrics (\cref{C:Lie-group}). Finally, as a substantial application, a Weit\-zen\-b{\"o}ck version of a theorem of Hoffman, Osserman, and Schoen~\cite{MR0694604} is established (\cref{T:HOS1,T:HOS2}), which contains the classical Euclidean case.

This paper is organized as follows. \cref{S:geometry-with-torsion} reviews Riemann-Cartan and Weit\-zen\-b{\"o}ck geometries, including their differential operators and hypersurfaces. The Riemann-Cartan version of the Codazzi equation together with its corollary is presented as a key ingredient (\cref{P:Codazzi-equation,C:mean-curvature}). Then \cref{S:Laplacian} is devoted to the Laplacian of the Gauss map. A Riemann-Cartan manifold has two connections, one of which is the Levi-Civita connection. For a Riemann-Cartan hypersurface \( M \), there are thus two possible Laplacians, \( \Lap g \) and \( \bar{\Lap} g \), of the Gauss map \( g \colon M \to \mathbb{S}^n \), with respect to the induced connection \( \del \) on \( M \) and the Levi-Civita connection \( \bar{\del} \) on \( M \), unless \( M \) is torsion-free. In this paper, both of them are covered. \cref{SS:hypersurfaces} starts with the general computation for hypersurfaces, yielding \cref{T:A}. The particular case of torsion-free hypersurfaces is the following.

\begin{pmain}[\cref{T:A,C:A}]
Let \( M \) be an oriented torsion-free smooth hypersurface embedded in a Weit\-zen\-b{\"o}ck\/ \( (n+1) \)-manifold. Let \( g \colon M \to \mathbb{S}^n \) be the Gauss map. Then, under the identification \( \phi \colon TM \isoto g^* T \mathbb{S}^n \) in \cref{L:identification},
\begin{align}
- \phi^{-1} \Lap g & = \Div (HI + 2W^{\mathsf{a}}), \label{E:A}
\end{align}
where \( H \) is the mean curvature, \( I \) is the identity operator, and \( W^{\mathsf{a}} \) is the skew-symmetric part of the Weingarten map. In particular, the Gauss map is harmonic iff the smooth\/ \( (1,1) \)-tensor field \( HI + 2W^{\mathsf{a}} \) is divergence-free.
\end{pmain}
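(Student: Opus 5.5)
The plan is to use the global parallel orthonormal frame \( E_1, \dots, E_{n+1} \) of the Weit\-zen\-b{\"o}ck manifold, with ambient connection \( D \) satisfying \( DE_i = 0 \). Write the unit normal as \( N = \sum_i g^i E_i \), so that \( g = (g^1, \dots, g^{n+1}) \). I will compute the tension field of \( g \colon M \to \mathbb{S}^n \) componentwise. Since \( \mathbb{S}^n \subset \mathbb{R}^{n+1} \) is isometrically embedded, \( \Lap g \) is the tangential part (orthogonal to \( g \)) of the vector \( (\Lap g^1, \dots, \Lap g^{n+1}) \). Here \( \Lap g^i \) is the trace of the Hessian taken with respect to the induced connection \( \del \).

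For the first derivatives, parallelism of \( E_i \) and metric compatibility of \( D \) give
\[
  Y g^i = \langle D_Y N, E_i \rangle = -\langle W Y, E_i \rangle .
\]
Under the identification \( \phi \) of \cref{L:identification}, a tangent vector \( v \perp N \) corresponds to its coefficient tuple, so this says \( \phi^{-1} \circ dg = -W \). For the second derivatives, I differentiate again and split \( D_X(WY) = \del_X(WY) + \II(X, WY) N \). This gives
\[
  \operatorname{Hess} g^i (X,Y) = -\langle (\del_X W) Y, E_i \rangle - \II(X, WY)\, g^i .
\]
Tracing over an orthonormal frame \( e_1, \dots, e_n \) of \( TM \), the second term is a multiple of \( g \) and drops out after projection. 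Reading the remainder through \( \phi \), I obtain
\[
  \phi^{-1} \Lap g = -\sum_j (\del_{e_j} W) e_j .
\]
This step uses only that \( D \) is flat with parallel frame, and that \( \del \) is the connection used in the Laplacian.

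The remaining task is to identify \( \sum_j (\del_{e_j} W) e_j \) with \( \Div(HI + 2W^{\mathsf{a}}) \), in the paper's convention for the divergence of a \( (1,1) \)-tensor. Decompose \( W = W^{\mathsf{s}} + W^{\mathsf{a}} \), so that \( W = W^{*} + 2W^{\mathsf{a}} \), where \( W^{*} \) is the adjoint. Then
\[
  \sum_j (\del_{e_j} W) e_j = \sum_j (\del_{e_j} W^{*}) e_j + 2 \Div W^{\mathsf{a}} .
\]
The first sum is, up to the metric, the contraction \( X \mapsto \sum_j \langle (\del_{e_j} W) X, e_j \rangle \). I will invoke the Riemann-Cartan Codazzi equation (\cref{P:Codazzi-equation}), specialized to zero ambient curvature and zero tangential torsion (torsion-freeness of \( M \)), via \cref{C:mean-curvature}. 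This should give
\[
  \sum_j \langle (\del_{e_j} W) X, e_j \rangle = X H ,
\]
that is, \( \sum_j (\del_{e_j} W^{*}) e_j = \Grad H = \Div(HI) \). Substituting yields \( -\phi^{-1}\Lap g = \Div(HI + 2W^{\mathsf{a}}) \). Harmonicity is equivalent to \( \Lap g = 0 \), and \( \phi \) is an isomorphism, which gives the final claim.

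The main obstacle is this last Codazzi step together with the bookkeeping of conventions. The Codazzi equation with torsion contains terms involving the normal torsion \( \tau \) and its derivatives. I must check that, in the Weit\-zen\-b{\"o}ck and torsion-free-hypersurface case, these terms either vanish or are exactly absorbed by the skew part: they should appear as the discrepancy between \( \Div W \) and \( \Div W^{*} \), which is \( 2\Div W^{\mathsf{a}} \). I also need to confirm which index of \( \del W \) the paper's \( \Div \) contracts. If \cref{C:mean-curvature} is stated as \( \Div W^{*} = dH \) or as \( \Div W = dH + (\text{torsion term}) \), I will rearrange accordingly, so that the torsion contribution is expressed as \( 2\Div W^{\mathsf{a}} \).
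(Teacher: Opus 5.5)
Your proposal is correct and follows essentially the paper's route: first $-\phi^{-1}\Lap g = \Div W$ (the paper gets this from $\phi W = -\mathrm{d}g$ and the connection-compatibility of $\phi$ rather than your componentwise Hessian computation in $\mathbb{R}^{n+1}$, but the content is the same), then \cref{C:mean-curvature} with $\tilde{R} = 0$ and $T = 0$ gives $\Grad H = \Div W^{\mathsf{t}} = \Div W - \Div(2W^{\mathsf{a}})$. Your worry about extra $\tau$-terms is unfounded: the Codazzi equation in \cref{P:Codazzi-equation} reads $(\del_X W)Y - (\del_Y W)X = -\tilde{R}(X,Y)N - WT(X,Y)$, so both terms vanish here, and the normal torsion enters only through the asymmetry of $W$, which is exactly the $2W^{\mathsf{a}}$ correction you already account for.
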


\noindent This implies Ruh and Vilms' theorem for hypersurfaces in \( \mathbb{R}^{n+1} \) (\cref{C:RV}), as the right-hand side of~\eqref{E:A} becomes \( \Grad H \) for such hypersurfaces.

The focus narrows to surfaces in \cref{SS:surfaces}, and the result is the following.

\begin{pmain}[\cref{T:B,C:B}]
Let \( M \) be an oriented smooth surface embedded in a Weit\-zen\-b{\"o}ck\/ \( 3 \)-manifold. Let \( g \colon M \to \mathbb{S}^2 \) be the Gauss map. Then, under the identification \( \phi \colon TM \isoto g^* T \mathbb{S}^2 \) in \cref{L:identification},
\begin{align}
- \phi^{-1} \Lap g & = \Grad H + J \Grad {\star} \tau + (HI + ({\star} \tau) J - W) (\tr T)^{\sharp}, \\
- \phi^{-1} \bar{\Lap} g & = \Grad H + J \Grad {\star} \tau + (HI + ({\star} \tau) J) (\tr T)^{\sharp},
\end{align}
where \( H \) is the mean curvature, \( \tau \in \Omega^2(M,\mathbb{R}) \) is the normal component of the ambient torsion, \( I \) is the identity operator, \( J \) is the almost complex structure, \( W \) is the Weingarten map, and\/ \( \tr T \in \Omega^1(M,\mathbb{R}) \) is a contraction of the torsion of \( M \) (\cref{D:contraction}). In particular, if \( H + \boldsymbol{i} {\star} \tau = 0 \) everywhere, then the Gauss map is harmonic with respect to the Levi-Civita connection on \( M \).
\end{pmain}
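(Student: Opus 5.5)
This is a plan to prove the statement for surfaces by specializing the general hypersurface formula \cref{T:A} to dimension two. Here is how I expect \cref{T:A} to read. Fix the global parallel orthonormal frame \( E_1,\dots,E_{n+1} \) of the Weit\-zen\-b{\"o}ck manifold and write \( N = \sum_a g^a E_a \). Since the ambient connection \( D \) satisfies \( D E_a = 0 \), we have \( D_X N = \sum_a (X g^a) E_a \). This gives \( dg(X) = -W X \) under \( \phi \) (with sign conventions as in \cref{L:identification}).

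The Laplacian is computed from the pullback connection on \( g^*T\mathbb{S}^n \subset g^*\mathbb{R}^{n+1} \). This connection is the tangential projection of componentwise differentiation, which under \( \phi \) is exactly \( \del \), the induced connection on \( M \), because \( D \) is flat and compatible. Therefore
\[ -\phi^{-1}\Lap g = \sum_i (\del_{e_i} W)(e_i) + (\text{a torsion term } W(\tr T)^\sharp\text{-type}), \]
where the correction appears because the trace defining \( \Lap \) uses \( \del \), which has torsion. For \( \bar\Lap \) one replaces \( \del \) by \( \bar\del \) in the trace of the Hessian. The difference \( \del-\bar\del \) is given by the contorsion of \( T \), and its trace contracts to a term involving \( \tr T \).

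The key step is to rewrite \( \sum_i (\del_{e_i}W)e_i \) using the Riemann–Cartan Codazzi equation \cref{P:Codazzi-equation}. With zero ambient curvature, this reads
\[ (\del_X W)Y - (\del_Y W)X + W(T(X,Y)) = 0, \]
up to the precise placement of \( \tau \)-terms. Tracing it gives \cref{C:mean-curvature}, which expresses the divergence of \( W \) as \( \Grad H \) plus a divergence of the skew part and terms in \( W(\tr T)^\sharp \). This is how \cref{T:A} arises in general.

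In dimension two everything becomes explicit. The skew-symmetric part of \( W \) is \( W^{\mathsf a} = c\,({\star}\tau)J \) for a constant \( c \), presumably \( \pm\tfrac12 \) by \eqref{E:tau1}–\eqref{E:tau2}. So \( W = HI + W^{\mathsf{s}}_0 + c({\star}\tau)J \), where \( W^{\mathsf s}_0 \) is traceless symmetric. Since \( \del J = 0 \) (as \( \del \) is metric and \( M \) is oriented of dimension two), we get \( \Div(({\star}\tau)J) = J\Grad{\star}\tau \), adjusting for sign convention on \( \Div \) of a (1,1)-tensor. Substituting into the traced Codazzi identity yields \( \Grad H + J\Grad{\star}\tau \), and the torsion terms collect into \( (HI+({\star}\tau)J)(\tr T)^\sharp \) minus \( W(\tr T)^\sharp \) for \( \Lap \).

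For \( \bar\Lap \), the contorsion correction should contribute exactly \( +W(\tr T)^\sharp \). I will verify this by writing \( T(X,Y) = \theta(X)Y - \theta(Y)X \), the form of any torsion on a surface with \( \theta \) determined by \( \tr T \). Then \( \del_XY - \bar\del_XY = \) \( \theta(Y)X - g(X,Y)\theta^\sharp \) up to normalization. Tracing the Hessian difference \( \sum_i(\del-\bar\del)_{e_i}e_i = -(\tr T)^\sharp \) (normalized) gives \( dg((\tr T)^\sharp) = -W(\tr T)^\sharp \), which accounts for the missing term.

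I expect the main obstacle to be bookkeeping of constants and signs. This covers the factor relating \( W^{\mathsf a} \) to \( ({\star}\tau)J \), the normalization of \( \tr T \) in \cref{D:contraction}, and the sign of \( \phi \). I would fix these by testing against a single example, e.g.\ a left-invariant frame on a 3-dimensional Lie group, or simply carry symbols throughout. The final claim then follows immediately: if \( H+\boldsymbol i{\star}\tau\equiv0 \), then \( H={\star}\tau=0 \), so every term in the \( \bar\Lap \) formula vanishes.
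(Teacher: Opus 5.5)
Your skeleton matches the paper's: specialize the hypersurface computation of \cref{T:A} to \( n = 2 \), as in \cref{L:B}. However, the torsion bookkeeping goes wrong at exactly the point where the answer is decided.

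First, there is no torsion correction between \( -\phi^{-1} \Lap g \) and \( \sum_i (\del_{e_i} W) e_i \). Since \( \phi \) intertwines \( \del \) with \( g^* \del^{\mathbb{S}^2} \) and \( \mathrm{d} g = -\phi W \), one has \( (\del_X \mathrm{d} g) Y = (g^* \del^{\mathbb{S}^2})_X (\mathrm{d} g (Y)) - \mathrm{d} g (\del_X Y) = -\phi (\del_X W) Y \). Both \( \Lap g \) and \( \Div W \) are traces built from the same connection \( \del \), so \( -\phi^{-1} \Lap g = \Div W \) exactly (\cref{T:Laplacian}). The torsion of \( M \) enters only when you switch to \( \bar{\del} \) (\cref{P:Hessian-Laplacian}). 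There your idea is right but your sign is reversed. Write \( T(X,Y) = \theta(X) Y - \theta(Y) X \), so that \( \theta = \tr T \). Then the contorsion is \( C(X,Y) = \langle X, Y \rangle \theta^{\sharp} - \theta(Y) X \), with \( \tr_M C = +(\tr T)^{\sharp} \). This is what gives \( -\phi^{-1} \bar{\Lap} g = -\phi^{-1} \Lap g + W (\tr T)^{\sharp} \).

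Second, and more importantly, the traced Codazzi equation with \( \tilde{R} = 0 \) (\cref{C:mean-curvature}) gives \( \Div W^{\mathsf{t}} = \Grad H + \tr(WT)^{\sharp} \). Hence \( -\phi^{-1} \Lap g = \Grad H + J \Grad {\star} \tau + \tr(WT)^{\sharp} \). The torsion term is therefore \( \tr(WT)^{\sharp} \), which in general dimension is not expressible through \( \tr T \) at all. In dimension two, \emph{all} of \( (HI + ({\star} \tau) J - W)(\tr T)^{\sharp} \) comes from it. The real content of the surface case is the two-dimensional identity
\[ \tr(WT)^{\sharp} = (HI - W^{\mathsf{t}}) (\tr T)^{\sharp} = (HI + ({\star} \tau) J - W) (\tr T)^{\sharp}. \]
Your proposal never derives this. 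It asserts that ``the torsion terms collect'' into the right expression, while attributing the \( -W (\tr T)^{\sharp} \) to the nonexistent correction above. Carried out as described, the bookkeeping would either double-count that term or leave it unexplained.

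Your own parametrization closes the gap at once. You get \( \tr(WT)(X) = \tr \bigl( Y \mapsto \theta(X) WY - \theta(Y) WX \bigr) = H \theta(X) - \theta(WX) \), and then use \( W^{\mathsf{t}} = W - 2W^{\mathsf{a}} = W - ({\star} \tau) J \). The paper obtains the same identity via \( S(X,Y) = T(WX,Y) + T(X,WY) = HT \) in \cref{R:S,L:B}.

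Two minor points. Since \( H = \tr W \), the trace part of \( W \) is \( \tfrac{1}{2} H I \), not \( HI \). Also, \( c = \tfrac{1}{2} \) by \eqref{E:tau2}.
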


\noindent For torsion-free surfaces, combining with a result of~\cite{MR5011406} yields the following equivalence, which generalizes a classical result for surfaces in \( \mathbb{R}^3 \) (\cref{C:Hopf}). See also Section~3 of~\cite{MR3456159} for related results.

\begin{pmaincor}[\cref{C:C}]
Let \( M \) be an oriented torsion-free smooth surface embedded in a Weit\-zen\-b{\"o}ck\/ \( 3 \)-manifold. Then the following three are equivalent.
\begin{enumerate}
\item The Gauss map \( g \colon M \to \mathbb{S}^2 \) is harmonic.
\item \( H + \boldsymbol{i} {\star} \tau \) is a holomorphic function on \( M \).
\item The Hopf differential \( \varphi \) on \( M \) is a holomorphic quadratic differential.
\end{enumerate}
\end{pmaincor}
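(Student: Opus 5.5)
The plan is to deduce the corollary from the second theorem, using the torsion-free hypothesis to remove the contraction terms. Then I would identify the remaining vector field with a $\bar\partial$-derivative of $H + \boldsymbol{i}{\star}\tau$, and finally relate that to the Hopf differential through the Codazzi equation.

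\emph{Step 1: (a)$\Leftrightarrow$(b).} Since $M$ is torsion-free, the induced connection $\del$ coincides with the Levi-Civita connection $\bar\del$, so $\Lap g = \bar{\Lap} g$. The torsion $T$ of $M$ vanishes, hence $\tr T = 0$. The second theorem (\cref{T:B,C:B}) then gives
\[
- \phi^{-1} \Lap g = \Grad H + J \Grad {\star}\tau .
\]
Because $\phi$ is an isomorphism, $g$ is harmonic iff $\Grad H + J\Grad{\star}\tau = 0$. Next I would write this in an isothermal coordinate $z = x + \boldsymbol{i}y$ with the orientation convention $J\partial_x = \partial_y$. Then $J\Grad f = (-f_y, f_x)/\lambda$ up to the conformal factor $\lambda$. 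The vanishing condition becomes
\[
H_x - ({\star}\tau)_y = 0, \qquad H_y + ({\star}\tau)_x = 0,
\]
which are exactly the Cauchy--Riemann equations for $H + \boldsymbol{i}{\star}\tau$. Equivalently, $\partial_{\bar z}(H + \boldsymbol{i}{\star}\tau) = 0$. The only point to check is the sign convention for $J$ and ${\star}$: with the opposite convention the statement would produce antiholomorphicity of $H - \boldsymbol{i}{\star}\tau$. I would fix the conventions from \cref{S:geometry-with-torsion} to confirm that the combination $H+\boldsymbol{i}{\star}\tau$ is the one that appears.

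\emph{Step 2: (b)$\Leftrightarrow$(c).} This is the ``result of~\cite{MR5011406}'' alluded to in the introduction. My plan is to derive it from the Riemann--Cartan Codazzi equation (\cref{P:Codazzi-equation}), specialized as in \cref{C:mean-curvature}. The Weitzenb\"ock ambient is flat, so the curvature term vanishes, and $M$ is torsion-free, so the tangential torsion terms in Codazzi drop out. I would write the second fundamental form in the coordinate $z$ as
\[
\II = \varphi\, dz^2 + \lambda (H + \boldsymbol{i}{\star}\tau)\, dz\, d\bar z + \overline{\varphi}\, d\bar z^2 ,
\]
possibly up to conjugation or normalization, using that the skew part of $W$ equals $({\star}\tau)J$. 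Codazzi then reads $\partial_{\bar z}\varphi = \tfrac{\lambda}{2}\,\partial_z(\cdots)$, where the right-hand side is the $z$-derivative of $H \pm \boldsymbol{i}{\star}\tau$. Under the correct convention, and mirroring the classical identity $\varphi_{\bar z} = \tfrac{\lambda}{2} H_z$, this gives $\varphi_{\bar z} = 0$ iff $H+\boldsymbol{i}{\star}\tau$ is holomorphic. If the earlier paper states this equivalence explicitly, I would simply cite it.

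\emph{Main obstacle.} I expect the difficulty to be bookkeeping of conventions rather than analysis. The two conjugations must match: the holomorphic function from Step 1 must be the same $H+\boldsymbol{i}{\star}\tau$ as the one Codazzi pairs with the Hopf differential. Here I would first verify the decomposition of $W$ for torsion-free $M$ in a Weitzenb\"ock $3$-manifold, namely $W = W^{\mathsf{s}} + ({\star}\tau)J$ with $\tr W^{\mathsf{s}} = 2H$. I would then recheck the Cauchy--Riemann signs against it.
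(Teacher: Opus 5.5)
Your proposal is correct and follows the paper's proof. For (a)$\Leftrightarrow$(b), you reproduce the paper's isothermal-coordinate computation, which turns $-\phi^{-1}\Lap g = \Grad H + J \Grad {\star}\tau$ into the Cauchy--Riemann equations for $H + \boldsymbol{i}{\star}\tau$. For (b)$\Leftrightarrow$(c), the paper simply cites \cref{T:Hopf-differential}, whose hypothesis $\tilde{R}(X,Y)N = JWJT(X,Y)$ holds trivially because $\tilde{R} = 0$ and $T = 0$.

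If you carry out the Codazzi derivation yourself, three corrections are needed.
\begin{enumerate}
\item The normalizations in this paper are $H = \tr W$ and $W = W^{\mathsf{s}} + \tfrac{1}{2}({\star}\tau)J$, not $({\star}\tau)J$ with $\tr W^{\mathsf{s}} = 2H$.
\item Since $\II$ is not symmetric, keep the two mixed coefficients separate: $\II(\partial_z,\partial_{\bar z}) = \tfrac{\lambda^2}{4}(H + \boldsymbol{i}{\star}\tau)$ and $\II(\partial_{\bar z},\partial_z) = \tfrac{\lambda^2}{4}(H - \boldsymbol{i}{\star}\tau)$.
\item With these, Codazzi gives $\partial_{\bar z}\II(\partial_z,\partial_z) = \tfrac{\lambda^2}{4}\,\overline{\partial_{\bar z}(H + \boldsymbol{i}{\star}\tau)}$, which confirms that the conjugations match.
\end{enumerate}
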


The above results underscore the significance of torsion-free hypersurfaces. Accordingly, the focus of \cref{SS:torsion-free-hypersurfaces} is on their abundance and an explicit example. The investigation continues in \cref{SS:totally-skew-symmetric-torsion}, beginning with the study of ambient manifolds admitting only torsion-free hypersurfaces. Nontrivial examples of such ambient manifolds occur only in dimension three (\cref{P:torsion-free}), characterized by having \emph{totally skew-symmetric torsion} (\cref{D:totally-skew-symmetric}). Indeed, connections with totally skew-symmetric torsion have been of considerable interest in both physics and mathematics (e.g., \cites{MR1006380,MR1928632,MR2047649,MR3261964,MR4148347,MR4184296,MR4574239}). For parallel totally skew-symmetric ambient torsions, the following result is obtained, which recovers Theorem~1 of~\cite{MR2002821} about ambient Lie groups (\cref{C:Lie-group}).

\begin{pmain}[\cref{T:D}]
Let \( M \) be an oriented smooth hypersurface embedded in a Weit\-zen\-b{\"o}ck\/ \( (n+1) \)-manifold with parallel totally skew-symmetric torsion. Let \( g \colon M \to \mathbb{S}^n \) be the Gauss map. Then, under the identification \( \phi \colon TM \isoto g^* T \mathbb{S}^n \) in \cref{L:identification},\/ \( - \phi^{-1} \Lap g = \Grad H \), where \( H \) is the mean curvature. In particular, if \( M \) is connected, the Gauss map is harmonic iff \( M \) has constant mean curvature.
\end{pmain}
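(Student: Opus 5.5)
The plan is to reduce \cref{T:D} to the torsion-free case, i.e.\ \cref{T:A,C:A}, and then show that the extra term \( 2 \Div W^{\mathsf{a}} \) vanishes.

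\emph{Step 1: \( M \) is torsion-free with symmetric Weingarten map.} Write \( \bar{T} \) for the ambient torsion, which is totally skew-symmetric and parallel for the flat Weitzenb{\"o}ck connection \( D \). Let \( N \) be the unit normal. The induced torsion on \( M \) is the tangential part of \( \bar{T} \) restricted to \( TM \), and \( \tau(X,Y) = \langle \bar{T}(X,Y), N \rangle \).
\begin{itemize}
\item By the relation between \( \tau \) and the asymmetry of the second fundamental form (\eqref{E:tau1}, \eqref{E:tau2}), we get \( W^{\mathsf{a}} = \tfrac12 \tau^\sharp \) up to sign, where \( \tau^\sharp \) is the skew endomorphism \( X \mapsto (\iota_N \bar{T}(X,\cdot))^\sharp \) restricted to \( TM \).
\item The induced torsion of \( M \) need not vanish. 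However, for totally skew torsion, \( \nabla \) and the Levi-Civita connection \( \bar{\nabla} \) on \( M \) differ by a skew term. So \( \Div \) is the same for both: \( \Div_{\nabla} = \Div_{\bar\nabla} \), since the correction has zero trace.
\item The general formula of \cref{T:A} (not only \cref{C:A}) should therefore give \( -\phi^{-1}\Lap g = \Div(HI + 2W^{\mathsf{a}}) \) plus terms involving \( \tr T \). By total skew-symmetry, \( \tr T = 0 \), so those terms drop.
\end{itemize}
I will first check exactly which extra terms appear in \cref{T:A} for non-torsion-free \( M \), and verify that each one contains \( \tr T \) or a trace of a totally skew tensor.

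\emph{Step 2: \( \Div W^{\mathsf{a}} = 0 \), the main step.} Let \( \omega \) be the ambient \( 3 \)-form \( \langle \bar{T}(\cdot,\cdot),\cdot \rangle \), so \( \tau = \iota_N \omega|_M \), and \( \Div W^{\mathsf{a}} \) is, up to sign, \( (\delta \tau)^\sharp \). Take a local orthonormal frame \( e_i \) of \( M \), and compute
\[ \textstyle\sum_i (\nabla_{e_i} \tau)(e_i, X) = \sum_i \bigl[ (D_{e_i}\omega)(N, e_i, X) + \omega(D_{e_i} N, e_i, X) \bigr], \]
the remaining terms arising from the tangential parts of the connections. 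This uses that \( \nabla \) is the tangential projection of \( D \).
\begin{itemize}
\item Parallelism gives \( D\omega = 0 \).
\item Since \( D_{e_i} N = -W e_i \), the second term is \( -\sum_i \omega(W e_i, e_i, X) \). This is the pairing of \( \omega(\cdot,\cdot,X) \), which is skew, with \( W \). It therefore picks out only the skew part \( W^{\mathsf{a}} \), which is itself built from \( \omega(N,\cdot,\cdot) \).
\end{itemize}
So the main obstacle is to show that \( \sum_i \omega(W^{\mathsf{a}} e_i, e_i, X) = 0 \). Expanding \( W^{\mathsf{a}} e_i = \tfrac12 \sum_j \omega(N,e_i,e_j) e_j \), up to sign, gives
\[ \textstyle \sum_{i,j} \omega(N,e_i,e_j)\,\omega(e_j,e_i,X). \]
This sum does not vanish by skew-symmetry alone. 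Here I will use parallelism again, via the first Bianchi identity for the flat connection with parallel torsion: \( \mathfrak{S}\,\bar{T}(\bar{T}(X,Y),Z) = 0 \), since curvature and \( D\bar{T} \) both vanish. Contracting this Jacobi-type identity with \( N \) and over \( e_i \) should kill the quadratic sum. The normal-direction terms \( \omega(N,N,\cdot) \) vanish, which is what makes the sum over \( e_i \) match the full ambient trace.

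\emph{Step 3: conclusion.} With \( \Div(2W^{\mathsf{a}}) = 0 \), we get \( -\phi^{-1}\Lap g = \Div(HI) = \Grad H \). Therefore, for connected \( M \), the Gauss map is harmonic iff \( \Grad H = 0 \), i.e.\ iff \( H \) is constant.
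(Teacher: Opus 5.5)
There is a genuine gap. For \( n \ge 3 \), two of your intermediate claims are false, and the two errors happen to cancel. Below, \( \tilde{T} \) and \( \tilde{\del} \) are the ambient torsion and connection (your \( \bar{T} \) and \( D \); in the paper a bar means Levi-Civita).

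(i) In Step~1, the extra term in the first formula of \cref{T:A} is \( \tr(WT)^{\sharp} \), which is not a multiple of \( \tr T \). For totally skew-symmetric \( T \) it reduces to \( \tr(W^{\mathsf{a}} T)^{\sharp} \), since \( \tr(W^{\mathsf{s}} T)^{\sharp} = 0 \), and this does not vanish in general. Your claim \( \Div = \overline{\Div} \) also fails. By \cref{L:gradient-divergence}\labelcref{L:gradient-divergence:b} the difference is \( \frac{1}{2} \tr_M T({{}\cdot{}}, A {{}\cdot{}}) \). This is zero for \( A = HI \), but equals \( -\tr(W^{\mathsf{a}} T)^{\sharp} \) for \( A = 2W^{\mathsf{a}} \). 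Contrary to the heading of Step~1, \( M \) is in general neither torsion-free nor has symmetric \( W \).

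(ii) In Step~2, the quadratic sum does not vanish, so no contraction of the Jacobi identity can kill it; the natural contraction with \( N \) and \( e_i \) is vacuous. Since \( \omega(N,N,{{}\cdot{}}) = 0 \), you may sum over a full ambient orthonormal frame \( (f_a) \). This gives \( \sum_{i,j} \omega(N,e_i,e_j)\, \omega(e_j,e_i,X) = -\sum_a \langle \tilde{T}(N,f_a), \tilde{T}(X,f_a) \rangle = -4 \widetilde{\Ric}(N,X) \), with \( \widetilde{\Ric} \) the ambient Levi-Civita Ricci curvature as in \cref{T:D}. This is nonzero unless \( N \) is a Ricci eigenvector.

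Concretely, take the Lie group \( \mathbb{S}^3 \times \mathbb{R} \) with the left-invariant frame \( (\tilde{E}_1, \dotsc, \tilde{E}_4) \) induced by \( (\boldsymbol{i}, \boldsymbol{j}, \boldsymbol{k}, \partial_t) \). Then \( \tilde{T}(\tilde{E}_1, \tilde{E}_2) = -2\tilde{E}_3 \) and cyclically, and \( \tilde{T}(\tilde{E}_4, {{}\cdot{}}) = 0 \). At a point of any hypersurface where \( N = (\tilde{E}_1 + \tilde{E}_4)/\sqrt{2} \), set \( X = (\tilde{E}_1 - \tilde{E}_4)/\sqrt{2} \). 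The quadratic sum at this \( X \) equals \( -4 \), and \( \Div(2W^{\mathsf{a}}) = -\tr(WT)^{\sharp} = 2X \ne 0 \). Both of your claims do hold when \( n = 2 \): there \( M \) is torsion-free by \cref{P:torsion-free}, and \( \omega(e_j,e_i,X) = 0 \) for three vectors tangent to a surface.

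The repair is already in your Step~2 computation, provided you keep the term you discarded. You correctly obtained \( \langle \Div(2W^{\mathsf{a}}), X \rangle = -\sum_i \omega(We_i, e_i, X) \). By cyclic symmetry, \( \tr(WT)(X) = \sum_i \langle W^{\mathsf{t}} e_i, \tilde{T}(X,e_i) \rangle = \sum_i \omega(X, We_i, e_i) = \sum_i \omega(We_i, e_i, X) \). Hence \( \Div(2W^{\mathsf{a}}) + \tr(WT)^{\sharp} = 0 \) when \( \tilde{\del} \tilde{T} = 0 \). The first formula of \cref{T:A} then gives \( -\phi^{-1} \Lap g = \Div(HI) = \Grad H \), and no Bianchi identity is needed.

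The paper organizes the same cancellation through the Levi-Civita form of \cref{T:A}. For totally skew-symmetric torsion, the algebraic terms \( \tr(WT)^{\sharp} - 2W^{\mathsf{a}} (\tr T)^{\sharp} + \tr_M T({{}\cdot{}}, W^{\mathsf{a}} {{}\cdot{}}) \) cancel. Then \cref{L:totally-skew-symmetric-torsion}\labelcref{L:totally-skew-symmetric-torsion:e} gives \( \overline{\Div}(2W^{\mathsf{a}}) = \tr_M ((\tilde{\del} \tilde{T})(N, {{}\cdot{}})) = 0 \). So the vanishing statement that actually holds is \( \overline{\Div}(2W^{\mathsf{a}}) = 0 \), not \( \Div(2W^{\mathsf{a}}) = 0 \).
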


A well-known application of Ruh and Vilms' theorem is a classification due to Hoffman, Osserman, and Schoen~\cite{MR0694604}: A connected complete oriented CMC surface in \( \mathbb{R}^3 \) whose Gauss map has image contained in a hemisphere must be either a plane or a cylinder. Many previous studies extending Ruh and Vilms' work have also addressed corresponding analogues~\cites{MR2002821,MR2231651,MR3456159,MR4132000}. Following this line of work, we obtain two related results in \cref{SS:HOS-theorem} (\cref{T:HOS1,T:HOS2}), from which the classical Euclidean case follows (\cref{C:HOS}).

\section{Geometry with torsion} \label{S:geometry-with-torsion}

This section collects the preliminaries needed for the present paper. It starts with a brief review of Riemann-Cartan and Weit\-zen\-b{\"o}ck geometries in \cref{SS:Riemann-Cartan-geometry}. It then discusses their differential operators such as gradient, divergence, and Laplacian in \cref{SS:gradient-divergence-Laplacian} and the framework for their hypersurfaces in \cref{SS:Riemann-Cartan-hypersurfaces}. It should be noted that this paper focuses exclusively on the case of the Riemannian signature, whereas the literature on Riemann-Cartan geometry often admits pseudo-Riemannian signatures.

\subsection{Riemann-Cartan geometry} \label{SS:Riemann-Cartan-geometry}

A \emph{Riemann-Cartan manifold} is a Riemannian manifold endowed with a metric-compatible connection, possibly with nonzero torsion. A trivial example is a Riemannian manifold with the Levi-Civita connection. A Riemann-Cartan manifold also has the Levi-Civita connection as a Riemannian manifold. Quantities associated with the Levi-Civita connection will be denoted by a bar in the present paper. For example, \( \bar{\del} \), \( \bar{R} \), and \( \overline{\Ric} \).

For a Riemann-Cartan manifold \( M \) with metric \( \langle {{}\cdot{}}, {{}\cdot{}} \rangle \) and connection \( \del \), the following smooth tensor fields on \( M \) are well defined. The \emph{contorsion} \( C \) and the \emph{torsion} \( T \) of \( M \) are the smooth \( (1,2) \)-tensor fields on \( M \) defined by
\begin{align}
\del_X Y & = \bar{\del}_X Y + C(X,Y), \qquad T(X,Y) = \del_X Y - \del_Y X - [X,Y],
\end{align}
where \( X, Y \in \Gamma(TM) \). \( M \) is \emph{torsion-free} iff \( T = 0 \) everywhere. The \emph{curvature} \( R \) of \( M \) is the smooth \( (1,3) \)-tensor field on \( M \) defined by
\begin{align}
R(X,Y) Z & = \del_X \del_Y Z - \del_Y \del_X Z - \del_{[X,Y]} Z,
\end{align}
and the \emph{Ricci curvature} \( \Ric \) of \( M \) is its contraction \( \Ric(X,Y) = \tr (R({{}\cdot{}}, X) Y) \), where \( X, Y, Z \in \Gamma(TM) \). \( M \) is \emph{flat} iff \( R = 0 \) everywhere. They satisfy
\begin{align}
T(X,Y) & = C(X,Y) - C(Y,X), \\
0 & = \langle C(X,Y), Z \rangle + \langle Y, C(X,Z) \rangle, \\
\langle Z, C(X,Y) \rangle & = \tfrac{1}{2} (\langle Z, T(X,Y) \rangle + \langle X, T(Z,Y) \rangle + \langle Y, T(Z,X) \rangle), \label{E:contorsion-torsion} \\
R(X,Y) Z & = \begin{aligned}[t]
& \bar{R}(X,Y) Z + (\bar{\del}_X C)(Y,Z) - (\bar{\del}_Y C)(X,Z) \label{E:curvature-contorsion} \\
& + C(X,C(Y,Z)) - C(Y,C(X,Z))
\end{aligned}
\end{align}
for all \( X, Y, Z \in \Gamma(TM) \). Along with these, \cref{L:contraction} below will be used later.

\begin{pdef} \label{D:contraction}
For a smooth \( (1,2) \)-tensor field \( P \) on \( M \), we define \( \tr P \in \Omega^1(M,\mathbb{R}) \) by \( (\tr P)(X) = \tr (P(X, {{}\cdot{}})) \), where \( X \in \Gamma(TM) \).
\end{pdef}

\noindent For a smooth \( (1,1) \)-tensor field \( A \) on \( M \), recall that the \emph{adjoint} \( A^{\mathsf{t}} \) of \( A \) is the unique smooth \( (1,1) \)-tensor field satisfying \( \langle X, AY \rangle = \langle A^{\mathsf{t}} X, Y \rangle \) for all \( X, Y \in \Gamma(TM) \), which produces a decomposition \( A = A^{\mathsf{s}} + A^{\mathsf{a}} \) by the \emph{symmetric part} \( A^{\mathsf{s}} = \frac{1}{2} (A + A^{\mathsf{t}}) \) and the \emph{skew-symmetric part} \( A^{\mathsf{a}} = \frac{1}{2} (A - A^{\mathsf{t}}) \).

\begin{plem} \label{L:contraction}
Let \( A \) and \( B \) be smooth\/ \( (1,1) \)-tensor fields on \( M \).
\begin{enumerate}
\item \label{L:contraction:a} \( \del_X (\tr A) = \tr (\del_X A) \) and\/ \( (\del_X A)^{\mathsf{t}} = \del_X A^{\mathsf{t}} \) for all \( X \in \Gamma(TM) \).
\item \label{L:contraction:b} \( \tr_M C(A {{}\cdot{}}, B {{}\cdot{}}) = \frac{1}{2} \tr_M T(A {{}\cdot{}}, B {{}\cdot{}}) + \tr ((AB^{\mathsf{t}})^{\mathsf{s}} T)^{\sharp} \), so\/ \( \tr_M C = (\tr T)^{\sharp} \).
\end{enumerate}
\end{plem}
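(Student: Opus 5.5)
Part \ref{L:contraction:a} uses only that \( \del \) is a metric connection, extended to tensors as a derivation commuting with contractions. Fix a local frame \( (e_i) \) with dual coframe \( (e^i) \). Then \( \tr A = \sum_i e^i(A e_i) \) is a full contraction of \( A \in \Gamma(T^*M \otimes TM) \). The Leibniz rule for \( \del_X \) on tensor products, together with its commutation with contractions, gives \( \del_X (\tr A) = \tr(\del_X A) \); equivalently, \( \del_X I = 0 \) and the frame-dependent terms cancel. For the adjoint, differentiate the identity \( \langle Y, AZ \rangle = \langle A^{\mathsf{t}} Y, Z \rangle \) along \( X \) using metric compatibility:
\begin{align}
\langle \del_X Y, AZ \rangle + \langle Y, (\del_X A) Z \rangle + \langle Y, A \del_X Z \rangle & = \langle (\del_X A^{\mathsf{t}}) Y, Z \rangle + \langle A^{\mathsf{t}} \del_X Y, Z \rangle + \langle A^{\mathsf{t}} Y, \del_X Z \rangle.
\end{align}
The first and third terms on each side cancel in pairs by the defining identity of \( A^{\mathsf{t}} \). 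What remains is \( \langle Y, (\del_X A) Z \rangle = \langle (\del_X A^{\mathsf{t}}) Y, Z \rangle \), that is, \( (\del_X A)^{\mathsf{t}} = \del_X A^{\mathsf{t}} \).

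For part \ref{L:contraction:b}, read \( \tr_M C(A\cdot, B\cdot) \) as the vector field \( \sum_i C(Ae_i, Be_i) \) for a local orthonormal frame \( (e_i) \). Read \( (AB^{\mathsf{t}})^{\mathsf{s}} T \) as the \( (1,2) \)-tensor \( (Z,Y) \mapsto T(Z, (AB^{\mathsf{t}})^{\mathsf{s}} Y) \), so that its \( \tr \) in the sense of \cref{D:contraction} is the \( 1 \)-form \( Z \mapsto \tr(T(Z, (AB^{\mathsf{t}})^{\mathsf{s}}\,\cdot)) \). Pair the left side with an arbitrary \( Z \) and apply \eqref{E:contorsion-torsion} termwise:
\begin{align}
\langle Z, C(Ae_i,Be_i) \rangle & = \tfrac12 \bigl( \langle Z, T(Ae_i,Be_i) \rangle + \langle Ae_i, T(Z,Be_i) \rangle + \langle Be_i, T(Z,Ae_i) \rangle \bigr).
\end{align}
Summing over \( i \), the first term gives \( \tfrac12 \langle Z, \tr_M T(A\cdot,B\cdot) \rangle \).

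The key step is rewriting the remaining two sums as traces of endomorphisms. We have \( \sum_i \langle Ae_i, T(Z,Be_i) \rangle = \tr(A^{\mathsf{t}} T(Z,\cdot) B) \), and cyclicity turns this into \( \tr(T(Z,\cdot) BA^{\mathsf{t}}) \). Likewise the third sum equals \( \tr(T(Z,\cdot) AB^{\mathsf{t}}) \). Since \( BA^{\mathsf{t}} = (AB^{\mathsf{t}})^{\mathsf{t}} \), the two sums together equal \( 2\tr(T(Z,\cdot)(AB^{\mathsf{t}})^{\mathsf{s}}) \). With the factor \( \tfrac12 \), this is exactly \( \langle Z, \tr((AB^{\mathsf{t}})^{\mathsf{s}}T)^{\sharp} \rangle \). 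As \( Z \) is arbitrary, the identity follows.

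For the special case \( A = B = I \), the term \( T(e_i,e_i) \) vanishes by skew-symmetry of \( T \), and \( (AB^{\mathsf{t}})^{\mathsf{s}} = I \). Hence \( \tr_M C = (\tr T)^{\sharp} \).

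The main obstacle is purely bookkeeping: fixing the slot conventions in the notation \( (AB^{\mathsf{t}})^{\mathsf{s}}T \) and in \cref{D:contraction} so that the cyclic-trace manipulation lands on the stated form. The computation itself is routine once the contorsion–torsion formula \eqref{E:contorsion-torsion} is applied.
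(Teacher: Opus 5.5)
Your proof is correct and follows the paper's route. In (a), the paper checks the trace identity by hand using the skew-symmetry of the connection form in an orthonormal frame, where you invoke the general fact that \(\del_X\) commutes with contractions, and it treats the adjoint exactly as you do; in (b), it applies \eqref{E:contorsion-torsion} in an orthonormal frame and combines the last two sums with explicit indices, where you use cyclicity of the trace. Note that the paper's convention is \((PT)(X,Y) = P\,T(X,Y)\) (compare the proof of \cref{C:mean-curvature}) rather than your \(T(X,PY)\); since \(\tr(P\,T(Z,{{}\cdot{}})) = \tr(T(Z,{{}\cdot{}})\,P)\), both readings give the same \(1\)-form, so nothing changes (also, in your adjoint step the terms that cancel on the right are the second and third, not the first and third).
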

\begin{proof}
Let \( \omega \in \Omega^1(U,\mathfrak{o}(n)) \) be the connection \( 1 \)-form relative to a local orthonormal smooth frame \( (E_1, \dotsc, E_n) \) for an open set \( U \) in \( M \). For~(a), fix \( X \in \Gamma(TM) \). The difference \( \del_X (\tr A) - \tr (\del_X A) \) is equal to
\begin{align*}
& \del_X \sum_{i = 1}^n \langle E_i, AE_i \rangle - \sum_{i = 1}^n \langle E_i, (\del_X A) E_i \rangle = \sum_{i = 1}^n (\langle \del_X E_i, AE_i \rangle + \langle E_i, A \del_X E_i \rangle) \\
& \qquad = \sum_{i, j = 1}^n (\omega^j_i(X) A^j_i + \omega^j_i(X) A^i_j) = \sum_{i, j = 1}^n (\omega^i_j(X) + \omega^j_i(X)) A^i_j = 0,
\end{align*}
from which the former equality follows. Applying \( \del_X \) to both sides of the equality \( \langle Y, AZ \rangle = \langle A^{\mathsf{t}} Y, Z \rangle \) yields the latter equality, where \( Y, Z \in \Gamma(TM) \) are given. Now, for~(b), use~\eqref{E:contorsion-torsion}. The left-hand side \( \sum_{i, j = 1}^n \langle E_i, C(AE_j, BE_j) \rangle E_i \) is equal to
\begin{align*}
& \frac{1}{2} \sum_{i, j = 1}^n (\langle E_i, T(AE_j, BE_j) \rangle + \langle AE_j, T(E_i, BE_j) \rangle + \langle BE_j, T(E_i, AE_j) \rangle) E_i \\
& \qquad = \frac{1}{2} \tr_M T(A {{}\cdot{}}, B {{}\cdot{}}) + \frac{1}{2} \sum_{i, j, k, \ell = 1}^n (A^k_j B^{\ell}_j + A^{\ell}_j B^k_j) \langle E_k, T(E_i, E_{\ell}) \rangle E_i.
\end{align*}
The second term is \( \sum_{i, j = 1}^n \langle (AB^{\mathsf{t}})^{\mathsf{s}} E_j, T(E_i, E_j) \rangle E_i \), or simply \( \tr ((AB^{\mathsf{t}})^{\mathsf{s}} T)^{\sharp} \).
\end{proof}

There are two fundamental quantities in Riemann-Cartan geometry: the \emph{torsion} and the \emph{curvature}. Riemannian geometry is the special case with zero torsion. At the other extreme with zero curvature, there is \emph{Weit\-zen\-b{\"o}ck geometry}, arising from parallelization. If a smooth manifold is parallelizable and given a global smooth frame \( (E_1, \dotsc, E_n) \) (i.e., it is \emph{parallelized}), it naturally possesses an oriented flat Riemann-Cartan structure: The orientation is obvious, and the metric and the metric-compatible connection, called the \emph{Weit\-zen\-b{\"o}ck connection}, are determined by \( \langle E_i, E_j \rangle = \delta_{ij} \) and \( \del E_i = 0 \), where \( i, j \in \{ 1, \dotsc, n \} \). A \emph{Weit\-zen\-b{\"o}ck manifold} is essentially a parallelized manifold, but with an emphasis on being equipped with this oriented flat Riemann-Cartan structure.

\begin{pwarn}
A flat Riemann-Cartan manifold is \emph{locally} a Weit\-zen\-b{\"o}ck manifold, as every point admits a local orthonormal smooth frame whose Weit\-zen\-b{\"o}ck connection coincides with the flat connection on the manifold. For this reason, in some literature, a Weit\-zen\-b{\"o}ck manifold merely means a flat Riemann-Cartan manifold.
\end{pwarn}

\noindent On \( \mathbb{R}^n \), the same Riemann-Cartan structure arises whether one considers the Riemannian structure given by the standard inner product or the Weit\-zen\-b{\"o}ck structure given by the standard global frame. In this sense, Riemannian and Weit\-zen\-b{\"o}ck geometries are two independent directions for generalizing Euclidean geometry.

Every closed orientable smooth \( 3 \)-manifold is parallelizable~\cite{MR1509530}, thus admitting a Weit\-zen\-b{\"o}ck structure. Another typical example of a Weit\-zen\-b{\"o}ck manifold is a Lie group. Any basis for the Lie algebra of a Lie group induces a left-invariant global smooth frame via the left translation, defining a Weit\-zen\-b{\"o}ck structure on the Lie group. For example, the Lie group \( \mathbb{S}^3 \) of unit quaternions admits the Weit\-zen\-b{\"o}ck structure given by the basis \( (\boldsymbol{i}, \boldsymbol{j}, \boldsymbol{k}) \).

\subsection{Gradient, divergence, and Laplacian} \label{SS:gradient-divergence-Laplacian}

Let \( M \) be a Riemann-Cartan manifold. The \emph{gradient} of a smooth function \( f \colon M \to \mathbb{R} \) is the smooth vector field \( \Grad f = (\del f)^{\sharp} = (\mathrm{d} f)^{\sharp} \) on \( M \). It does not depend on the connection on \( M \). The \emph{divergence} of a smooth \( (1,1) \)-tensor field \( A \) on \( M \) is the smooth vector field \( \Div A = \tr_M (\del A) \) on \( M \). It does depend on the connection on \( M \), and the difference from the Levi-Civita one is given as follows.

\begin{plem} \label{L:gradient-divergence}
Let \( M \) be a Riemann-Cartan manifold. Let \( f \colon M \to \mathbb{R} \) be a smooth function, and let \( A \) be a smooth\/ \( (1,1) \)-tensor field on \( M \).
\begin{enumerate}
\item \label{L:gradient-divergence:a} The Leibniz rule holds:\/ \( \Div(fA) = A \Grad f + f \Div A \).
\item \label{L:gradient-divergence:b} The difference between the two divergences of \( A \) is given by
\begin{align}
\begin{aligned}
\Div A - \overline{\Div}\, A & = \tr_M C({{}\cdot{}}, A {{}\cdot{}}) - A \tr_M C \\
& = \tfrac{1}{2} \tr_M T({{}\cdot{}}, A {{}\cdot{}}) + \tr (A^{\mathsf{s}} T)^{\sharp} - A (\tr T)^{\sharp}.
\end{aligned}
\end{align}
\end{enumerate}
\end{plem}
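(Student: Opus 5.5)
For part \ref{L:gradient-divergence:a} I will work directly from the definition \( \Div A = \tr_M (\del A) \), i.e.\ \( \Div A = \sum_{i} (\del_{E_i} A) E_i \) in a local orthonormal frame \( (E_1, \dotsc, E_n) \). The product rule \( \del_X (fA) = (Xf) A + f \del_X A \) gives
\begin{align*}
\Div(fA) = \sum_i (E_i f) A E_i + f \Div A = A \Bigl( \sum_i \langle \Grad f, E_i \rangle E_i \Bigr) + f \Div A = A \Grad f + f \Div A .
\end{align*}
This is routine.

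For the first equality in part \ref{L:gradient-divergence:b}, I will express \( \del A \) in terms of \( \bar{\del} A \) and the contorsion. For \( X, Y \in \Gamma(TM) \),
\begin{align*}
(\del_X A) Y & = \del_X (AY) - A \del_X Y \\
& = \bar{\del}_X (AY) + C(X, AY) - A \bar{\del}_X Y - A C(X,Y) \\
& = (\bar{\del}_X A) Y + C(X, AY) - A C(X,Y).
\end{align*}
I then set \( X = Y = E_i \) and sum over \( i \). This yields \( \Div A - \overline{\Div}\, A = \tr_M C({{}\cdot{}}, A {{}\cdot{}}) - A \tr_M C \), where \( \tr_M \) means \( \sum_i P(E_i, E_i) \) for a vector-valued bilinear form \( P \).

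For the second equality, I will invoke \cref{L:contraction}\ref{L:contraction:b} twice. With the choice \( A \mapsto I \) and \( B \mapsto A \), it gives
\begin{align*}
\tr_M C({{}\cdot{}}, A {{}\cdot{}}) = \tfrac{1}{2} \tr_M T({{}\cdot{}}, A {{}\cdot{}}) + \tr ((A^{\mathsf{t}})^{\mathsf{s}} T)^{\sharp},
\end{align*}
and \( (A^{\mathsf{t}})^{\mathsf{s}} = A^{\mathsf{s}} \). The already-stated consequence \( \tr_M C = (\tr T)^{\sharp} \) handles the second term, giving \( A \tr_M C = A (\tr T)^{\sharp} \). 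Combining the two produces the stated formula.

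The only point needing care is the exact convention in the term \( \tr (A^{\mathsf{s}} T) \). I will read it as the 1-form \( X \mapsto \sum_j \langle A^{\mathsf{s}} E_j, T(X, E_j) \rangle \), which is how it appears at the end of the proof of \cref{L:contraction}. I will then confirm that the roles of \( A \) and \( B \) there match the substitution above, so that it is \( AB^{\mathsf{t}} \) with \( A = I \), \( B = A \), giving \( A^{\mathsf{t}} \) and not its transpose. Either way the symmetric part is \( A^{\mathsf{s}} \), so no obstacle is expected.
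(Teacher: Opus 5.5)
Your proposal is correct and follows the paper's proof. In (a) you trace the product rule. In (b) you sum $(\del_{E_i}A - \bar{\del}_{E_i}A)E_i = C(E_i,AE_i) - A\,C(E_i,E_i)$ over an orthonormal frame, then apply \cref{L:contraction}\labelcref{L:contraction:b} with $(I,A)$ in place of $(A,B)$. Your two observations, $(A^{\mathsf{t}})^{\mathsf{s}} = A^{\mathsf{s}}$ and $\tr_M C = (\tr T)^{\sharp}$, are exactly what that step needs.
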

\begin{proof}
For~(a), note that \( \del (fA) = \mathrm{d} f \otimes A + f \del A \) and \( \tr_M (\mathrm{d} f \otimes A) = A (\mathrm{d} f)^{\sharp} \). For~(b), let \( (E_1, \dotsc, E_n) \) be a local orthonormal smooth frame for \( M \). Summing both sides of \( (\del_{E_i} A - \bar{\del}_{E_i} A) E_i = C(E_i, AE_i) - A C(E_i, E_i) \) over \( i \in \{ 1, \dotsc, n \} \) yields the first equality. The second equality follows from \cref{L:contraction}\labelcref{L:contraction:b}.
\end{proof}

Now, consider a smooth map \( F \colon M \to N \) between Riemann-Cartan manifolds. Its differential at \( p \in M \) is a linear map \( \mathrm{d} F |_p \colon T_p M \to T_{F(p)} N \), which defines a global smooth section \( \mathrm{d} F \in \Gamma(T^* M \otimes F^* TN) \). The connection on \( M \) induces a connection on the cotangent bundle \( T^* M \surjto M \), and the connection on \( N \) induces a connection on the pullback bundle \( F^* TN \surjto M \). Then they induce a connection on \( T^* M \otimes F^* TN \surjto M \), which yields the map \( \del (\mathrm{d} F) \colon \Gamma(TM) \to \Gamma(T^* M \otimes F^* TN) \). The \emph{Hessian} of \( F \) at \( p \in M \) is the bilinear map
\begin{align}
& \Hess F |_p = \del (\mathrm{d} F) |_p \colon T_p M \times T_p M \to T_{F(p)} N,
\end{align}
and the \emph{Laplacian} or \emph{tension field} of \( F \) is its trace \( \Lap F = \tr_M (\Hess F) \in \Gamma(F^* TN) \) with respect to the metric on \( M \). \( F \) is \emph{harmonic} iff \( \Lap F = 0 \) everywhere. If \( (x^i) \) and \( (y^{\alpha}) \) are local charts near \( p \in M \) and \( F(p) \in N \) respectively,
\begin{align}
(\Hess F)^{\alpha}_{ij} & = \frac{\partial^2 F^{\alpha}}{\partial x^i \partial x^j} - \frac{\partial F^{\alpha}}{\partial x^k} (\Gamma_M)^k_{ij} + \frac{\partial F^{\beta}}{\partial x^i} \frac{\partial F^{\gamma}}{\partial x^j} (\Gamma_N)^{\alpha}_{\beta \gamma} \circ F
\end{align}
under the Einstein summation convention, where \( \Gamma_M \) and \( \Gamma_N \) are the Christoffel symbols of the connections on \( M \) and \( N \) respectively.

As a Riemann-Cartan manifold possesses two connections, there are four possible Hessians of a smooth map \( F \colon M \to N \) between Riemann-Cartan manifolds:
\begin{align}
& \Hess_{\del^M, \del^N} F, \qquad \Hess_{\bar{\del}^M, \del^N} F, \qquad \Hess_{\del^M, \bar{\del}^N} F, \qquad \Hess_{\bar{\del}^M, \bar{\del}^N} F.
\end{align}
Accordingly, there are four possible corresponding Laplacians. The difference between the first two is given as follows.

\begin{pprop} \label{P:Hessian-Laplacian}
Let \( F \colon M \to N \) be a smooth map between Riemann-Cartan manifolds. Then
\begin{align}
\Hess_{\del^M, \del^N} F - \Hess_{\bar{\del}^M, \del^N} F & = - \mathrm{d} F \circ C^M, \\
\Lap_{\del^M, \del^N} F - \Lap_{\bar{\del}^M, \del^N} F & = - \mathrm{d} F (\tr T_M)^{\sharp},
\end{align}
where \( C^M \) and \( T_M \) are the contorsion and the torsion of \( M \) respectively. Note that the right-hand sides do not depend on\/ \( \del^N \).
\end{pprop}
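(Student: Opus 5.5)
The plan is to compute directly from the definition of the Hessian as the covariant derivative of \( \mathrm{d} F \in \Gamma(T^* M \otimes F^* TN) \). Fix vector fields \( X, Y \in \Gamma(TM) \). The induced connection on the tensor product bundle obeys the Leibniz rule. With the same connection \( \del^N \) on \( F^* TN \), we have
\begin{align*}
(\Hess_{\del^M, \del^N} F)(X,Y) & = \del^N_X (\mathrm{d} F(Y)) - \mathrm{d} F(\del^M_X Y), \\
(\Hess_{\bar{\del}^M, \del^N} F)(X,Y) & = \del^N_X (\mathrm{d} F(Y)) - \mathrm{d} F(\bar{\del}^M_X Y),
\end{align*}
where \( \del^N_X \) here denotes the pullback connection on \( F^* TN \). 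The key observation is that the first term is literally the same in both expressions. Whichever connection is placed on \( T^* M \), the pullback connection on \( F^* TN \) is unchanged. This also explains why the right-hand sides do not depend on \( \del^N \).

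Subtracting the two lines, the \( \del^N \)-terms cancel. Using the defining relation \( \del^M_X Y = \bar{\del}^M_X Y + C^M(X,Y) \), the difference becomes
\begin{align*}
- \mathrm{d} F(\del^M_X Y - \bar{\del}^M_X Y) = - \mathrm{d} F(C^M(X,Y)).
\end{align*}
This is the first identity, read as an equality of \( F^* TN \)-valued bilinear forms. The only point needing care is the convention that \( \Hess F(X,Y) = (\del_X \mathrm{d} F)(Y) \). With this convention, the contorsion enters through its arguments in the order \( (X,Y) \), which matches \( C^M \) as defined.

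For the Laplacian identity, I would take the trace with respect to the metric on \( M \) using a local orthonormal frame \( (E_1, \dotsc, E_n) \). The two Laplacians are traces with respect to the same metric, so their difference is \( - \sum_i \mathrm{d} F(C^M(E_i,E_i)) = - \mathrm{d} F(\tr_M C^M) \), since \( \mathrm{d} F \) is linear on each fiber. Finally, \cref{L:contraction}\labelcref{L:contraction:b} (with \( A = B = I \)) gives \( \tr_M C^M = (\tr T_M)^{\sharp} \), which yields the stated formula.

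There is no real obstacle here. The only step deserving attention is the bookkeeping of the tensor-product connection, to confirm that the connection on the \( F^* TN \) factor contributes identically to both Hessians and so cancels.
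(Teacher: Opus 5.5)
Your proposal is correct and matches the paper's proof. The pullback-connection terms cancel, leaving \( -\mathrm{d} F(C^M(X,Y)) \), and tracing together with \cref{L:contraction}\labelcref{L:contraction:b} (giving \( \tr_M C^M = (\tr T_M)^{\sharp} \)) yields the Laplacian identity.
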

\begin{proof}
For the former equality, observe that
\begin{align*}
& {\left( (F^* \del^N)_X (\mathrm{d} F (Y)) - \mathrm{d} F (\del^M_X Y) \right)} - {\left( (F^* \del^N)_X (\mathrm{d} F (Y)) - \mathrm{d} F (\bar{\del}^M_X Y) \right)} \\
& \qquad = - \mathrm{d} F (\del^M_X Y - \bar{\del}^M_X Y) = - \mathrm{d} F (C^M(X,Y))
\end{align*}
for all \( X, Y \in \Gamma(TM) \), where \( F^* \del^N \) is the pullback connection on \( F^* TN \surjto M \). For the latter equality, recall from \cref{L:contraction}\labelcref{L:contraction:b} that \( \tr_M C^M = (\tr T_M)^{\sharp} \).
\end{proof}

In particular, consider a smooth map \( F \colon M \to \mathbb{S}^n \), where \( M \) is a Riemann-Cartan manifold. Let \( \hat{\iota} \colon \mathbb{S}^n \injto \mathbb{R}^{n+1} \) be the inclusion, and let \( \hat{F} = \hat{\iota} \circ F \colon M \to \mathbb{R}^{n+1} \). It is elementary that \( \del^{\mathbb{R}^{n+1}}_{\mathrm{d} \hat{\iota} (V)} (\mathrm{d} \hat{\iota} (W)) = \mathrm{d} \hat{\iota} \, \del^{\mathbb{S}^n}_V W - \langle V, W \rangle \nu \) for all \( V, W \in \Gamma(T \mathbb{S}^n) \), where \( \nu \) is the outward unit normal vector field along \( \mathbb{S}^n \). It follows that
\begin{align}
(\hat{F}^* \del^{\mathbb{R}^{n+1}})_X (\mathrm{d} \hat{F} (Y)) & = \mathrm{d} \hat{\iota} (F^* \del^{\mathbb{S}^n})_X (\mathrm{d} F (Y)) - \langle \mathrm{d} F (X), \mathrm{d} F (Y) \rangle \nu, \\
(\del (\mathrm{d} \hat{F}))_X Y & = \mathrm{d} \hat{\iota} \, (\del (\mathrm{d} F))_X Y - \langle \mathrm{d} F (X), \mathrm{d} F (Y) \rangle \nu
\end{align}
for all \( X, Y \in \Gamma(TM) \). Therefore, the Laplacians satisfy
\begin{align}
\Lap \hat{F} & = \mathrm{d} \hat{\iota} \, \Lap F - \tr_M \langle \mathrm{d} F, \mathrm{d} F \rangle \nu = \mathrm{d} \hat{\iota} \, \Lap F - \| \mathrm{d} F \|^2 \nu \label{E:Laplacian}
\end{align}
as global smooth sections of \( \hat{F}^* T \mathbb{R}^{n+1} \surjto M \).

\subsection{Riemann-Cartan hypersurfaces} \label{SS:Riemann-Cartan-hypersurfaces}

Even in the presence of nonzero torsion, the standard framework for submanifolds in Riemannian geometry naturally extends to the Riemann-Cartan setting as in~\cite{MR5011406}. For our purposes, it is sufficient to recall the codimension-one case. Let \( M \) be a smooth hypersurface embedded in a Riemann-Cartan manifold \( \tilde{M} \). Then the metric on \( \tilde{M} \) restricts to a metric on \( M \), and the metric-compatible connection on \( \tilde{M} \) projects to a metric-compatible connection on \( M \). In this manner, \( M \) inherits a Riemann-Cartan structure, being a \emph{Riemann-Cartan hypersurface}. However, even if \( \tilde{M} \) is a Weit\-zen\-b{\"o}ck manifold, the inherited Riemann-Cartan structure on \( M \) is not necessarily Weit\-zen\-b{\"o}ck (e.g., spheres in the Euclidean space). In the present paper, quantities associated with the ambient manifold will be denoted by a tilde. For example, \( \tilde{\del} \), \( \tilde{T} \), \( \tilde{R} \), and \( \widetilde{\Ric} \).

Suppose further that \( M \) and \( \tilde{M} \) are oriented. There is a unique unit normal vector field \( N \) along \( M \) such that a frame \( (E_1, \dotsc, E_n) \) for \( M \) is oriented iff the frame \( (N, E_1, \dotsc, E_n) \) for \( \tilde{M} \) is oriented. The \emph{second fundamental form} \( \II \) of \( M \) is the smooth \( (0,2) \)-tensor field on \( M \) defined by \( \II(X,Y) = \langle N, \tilde{\del}_X Y \rangle \), where \( X, Y \in \Gamma(TM) \) are extended arbitrarily smoothly. \( M \) is \emph{totally geodesic} iff \( \II = 0 \) everywhere. The \emph{Weingarten map} \( W \) of \( M \) is the unique smooth \( (1,1) \)-tensor field on \( M \) satisfying \( \II(X,Y) = \langle WX, Y \rangle \) for all \( X, Y \in \Gamma(TM) \). Then \( W = - \tilde{\del} N \) if \( N \) is extended arbitrarily smoothly. The \emph{extrinsic Gaussian curvature} and the \emph{mean curvature} of \( M \) are defined by \( K^{\mathrm{e}} = \det W \) and \( H = \tr W \) respectively. A crucial difference from Riemannian geometry is that the second fundamental form is not necessarily symmetric: For all \( X, Y \in \Gamma(TM) \),
\begin{align}
\II(X,Y) - \II(Y,X) & = \langle N, \tilde{\del}_X Y - \tilde{\del}_Y X \rangle = \langle N, \tilde{T}(X,Y) \rangle = \tau(X,Y), \label{E:tau1} \\
\langle 2W^{\mathsf{a}} X, Y \rangle & = \langle WX, Y \rangle - \langle X, WY \rangle = \tau(X,Y), \label{E:tau2}
\end{align}
where \( \tau = \langle N, \tilde{T} \rangle \in \Omega^2(M,\mathbb{R}) \) (Definition~3.3 of~\cite{MR5011406}). It is symmetric exactly when the differential \( 2 \)-form \( \tau \) vanishes.

Here we state and prove the Riemann-Cartan version of the Codazzi equation and derive a corollary that will be a key ingredient later.

\begin{pprop}[Codazzi equation, Riemann-Cartan version] \label{P:Codazzi-equation}
Let \( M \) be a smooth hypersurface embedded in an oriented Riemann-Cartan manifold \( \tilde{M} \), oriented by a unit normal vector field \( N \) along \( M \). Then, for all \( X, Y \in \Gamma(TM) \),
\begin{align}
(\del_X W) Y - (\del_Y W) X & = - \tilde{R}(X,Y) N - WT(X,Y).
\end{align}
\end{pprop}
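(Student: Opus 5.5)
We extend \( N \), \( X \), \( Y \) arbitrarily smoothly to a neighbourhood of a point and compute \( \tilde{R}(X,Y) N \) directly from its definition, decomposing every ambient derivative into tangential and normal parts. The basic identities are the Weingarten formula \( \tilde{\del}_X N = - WX \), which holds along \( M \) since \( W = - \tilde{\del} N \), and the Gauss formula \( \tilde{\del}_X Y = \del_X Y + \II(X,Y) N \). The Gauss formula is precisely the statement that the induced connection \( \del \) is the tangential projection of \( \tilde{\del} \). Note that \( WX \) is indeed tangent: metric compatibility and \( \langle N, N \rangle = 1 \) give \( \langle \tilde{\del}_X N, N \rangle = 0 \).

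The computation proceeds in three steps. First, write
\begin{align*}
\tilde{R}(X,Y) N & = \tilde{\del}_X \tilde{\del}_Y N - \tilde{\del}_Y \tilde{\del}_X N - \tilde{\del}_{[X,Y]} N \\
& = - \tilde{\del}_X (WY) + \tilde{\del}_Y (WX) + W[X,Y].
\end{align*}
This is legitimate along \( M \) because \( [X,Y] \) is tangent to \( M \) and \( \tilde{\del}_X \) of a field along \( M \) depends only on its values on \( M \). Second, apply the Gauss formula to \( \tilde{\del}_X (WY) \) and \( \tilde{\del}_Y (WX) \), and expand \( \del_X (WY) = (\del_X W) Y + W \del_X Y \). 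Third, collect the tangential part. It equals
\begin{align*}
- (\del_X W) Y + (\del_Y W) X - W (\del_X Y - \del_Y X - [X,Y]),
\end{align*}
and the last bracket is exactly the induced torsion \( T(X,Y) \).

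The normal part is \( - \II(X, WY) + \II(Y, WX) \), which in general does not vanish. This causes no trouble, however: the displayed identity in the statement should be read with \( \tilde{R}(X,Y)N \) meaning the full ambient vector, and metric compatibility forces it to be tangent. Indeed, \( \langle \tilde{R}(X,Y) N, N \rangle = 0 \) because \( \tilde{R}(X,Y) \) is skew-adjoint for a metric connection. Consequently \( \II(Y, WX) = \II(X, WY) \) automatically; equivalently, \( \langle WY, WX \rangle \) is symmetric, as it clearly is.

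So the only care needed is bookkeeping. Specifically, we must check that the induced connection's torsion \( T \) appears rather than the ambient \( \tilde{T} \). This works because the tangential projection of \( \tilde{\del}_X Y - \tilde{\del}_Y X - [X,Y] \) is \( T(X,Y) \), while its normal part \( \tau(X,Y) \) never enters, since \( W \) acts only on the tangential part. Rearranging then gives \( (\del_X W) Y - (\del_Y W) X = - \tilde{R}(X,Y) N - W T(X,Y) \).
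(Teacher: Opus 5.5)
Your proof is correct and is essentially the paper's argument: both expand \( \tilde{R}(X,Y)N \) using \( W = -\tilde{\nabla} N \) and take tangential parts. This produces \( -(\nabla_X W)Y + (\nabla_Y W)X - WT(X,Y) \). The normal component is then disposed of via \( \langle \tilde{R}(X,Y)N, N \rangle = 0 \) from metric compatibility; the paper merely runs the same computation starting from the left-hand side.

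One small slip: the normal part \( -\II(X,WY) + \II(Y,WX) = -\langle WX, WY \rangle + \langle WY, WX \rangle \) vanishes identically. Your phrase ``in general does not vanish'' therefore contradicts your own next lines. This is harmless, and in fact gives a direct check that needs no appeal to skew-adjointness of \( \tilde{R}(X,Y) \).
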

\begin{proof}
Fix \( X, Y \in \Gamma(TM) \). The left-hand side is equal to
\begin{align*}
& (\del_X (WY) - \del_Y (WX)) - W (\del_X Y - \del_Y X) \\
& \qquad = - (\del_X \tilde{\del}_Y N - \del_Y \tilde{\del}_X N) - WT(X,Y) + \tilde{\del}_{[X,Y]} N \\
& \qquad = - (\tilde{R}(X,Y) N)^{\top} - WT(X,Y).
\end{align*}
Since \( \langle \tilde{R}(X,Y) N, N \rangle = 0 \) by Proposition~2.1 of~\cite{MR5011406}, \( \tilde{R}(X,Y) N \in \Gamma(TM) \).
\end{proof}

\begin{pcor} \label{C:mean-curvature}
In the setting of \cref{P:Codazzi-equation},
\begin{align}
\mathrm{d} H & = (\Div W^{\mathsf{t}})^{\flat} + \widetilde{\Ric}({{}\cdot{}}, N) - \tr(WT).
\end{align}
\end{pcor}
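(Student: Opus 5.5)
Take the trace of the Codazzi equation of \cref{P:Codazzi-equation} against a local orthonormal frame \( (E_1, \dotsc, E_n) \) of \( M \). Set \( Y = E_i \), pair the identity with \( E_i \), and sum over \( i \). That is, for fixed \( X \in \Gamma(TM) \) we evaluate
\begin{align*}
\sum_{i=1}^n \langle (\del_X W) E_i, E_i \rangle - \sum_{i=1}^n \langle (\del_{E_i} W) X, E_i \rangle = - \sum_{i=1}^n \langle \tilde{R}(X,E_i) N, E_i \rangle - \sum_{i=1}^n \langle W T(X,E_i), E_i \rangle,
\end{align*}
and identify each of the four sums with a term in the claimed formula.

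\emph{The two sums on the left.}
\begin{itemize}
\item The first sum is \( \tr(\del_X W) \). By \cref{L:contraction}\labelcref{L:contraction:a} this equals \( \del_X(\tr W) = X(H) = \mathrm{d} H(X) \).
\item For the second sum, write \( \langle (\del_{E_i} W) X, E_i \rangle = \langle X, (\del_{E_i} W)^{\mathsf{t}} E_i \rangle \). By \cref{L:contraction}\labelcref{L:contraction:a} again, \( (\del_{E_i} W)^{\mathsf{t}} = \del_{E_i} W^{\mathsf{t}} \). Summing over \( i \) gives \( \langle X, \tr_M(\del W^{\mathsf{t}}) \rangle = (\Div W^{\mathsf{t}})^{\flat}(X) \).
\end{itemize}
Moving this second term to the right-hand side produces the \( (\Div W^{\mathsf{t}})^{\flat} \) term.

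\emph{The torsion sum.} Here \( \sum_i \langle W T(X,E_i), E_i \rangle = \tr(W T(X,{}\cdot{})) \). Interpreting \( WT \) as the \( (1,2) \)-tensor \( (X,Y) \mapsto W T(X,Y) \), \cref{D:contraction} identifies this with \( \tr(WT)(X) \), which gives the \( -\tr(WT) \) term.

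\emph{The curvature sum.} This is the step that needs care. We need
\begin{align*}
- \sum_{i=1}^n \langle \tilde{R}(X,E_i) N, E_i \rangle = \widetilde{\Ric}(X,N),
\end{align*}
where \( \widetilde{\Ric}(X,N) = \tr(\tilde{R}({}\cdot{},X)N) \) is computed over the full ambient frame \( (N, E_1, \dotsc, E_n) \).
\begin{itemize}
\item The \( N \)-slot of the ambient trace contributes \( \langle \tilde{R}(N,X)N, N \rangle \). This vanishes by the metric compatibility fact already used in the proof of \cref{P:Codazzi-equation}: \( \langle \tilde{R}(\cdot,\cdot)N, N \rangle = 0 \).
\item By antisymmetry of \( \tilde{R} \) in its first two arguments, which holds for any connection, \( \langle \tilde{R}(E_i,X)N, E_i \rangle = - \langle \tilde{R}(X,E_i)N, E_i \rangle \).
\end{itemize}
Hence the sum over the \( E_i \) gives exactly \( \widetilde{\Ric}(X,N) \), and the sign matches.

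No pair symmetry of \( \tilde{R} \) is used at any point, so the torsion causes no trouble here. The only real obstacle is tracking the sign and slot conventions in \( \widetilde{\Ric} \) and in \( \tr(WT) \). Collecting the four identifications gives the stated formula for \( \mathrm{d} H \).
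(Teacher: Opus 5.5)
Your proposal is correct and is essentially the paper's proof. The paper likewise sets \( Y = E_i \) in the Codazzi equation and pairs with \( E_i \). It uses \cref{L:contraction}\labelcref{L:contraction:a} both for \( \tr(\del_X W) = \mathrm{d} H(X) \) and for \( (\del_{E_i} W)^{\mathsf{t}} = \del_{E_i} W^{\mathsf{t}} \), and it discards the \( N \)-slot of the ambient Ricci trace via \( \langle N, \tilde{R}(X,N) N \rangle = 0 \). Your explicit sign check, which uses only antisymmetry of \( \tilde{R} \) in its first two arguments and not pair symmetry, is the step the paper leaves implicit when it sums over \( i \).
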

\begin{proof}
Let \( (E_1, \dotsc, E_n) \) be a local oriented orthonormal smooth frame for \( M \). Fix \( X \in \Gamma(TM) \). By \cref{L:contraction}\labelcref{L:contraction:a} and \cref{P:Codazzi-equation}, \( \mathrm{d} H (X) = \tr (\del_X W) \), and
\begin{align*}
\langle E_i, (\del_X W) E_i \rangle & = \langle (\del_{E_i} W^{\mathsf{t}}) E_i, X \rangle - \langle E_i, \tilde{R}(X,E_i) N \rangle - \langle E_i, W T(X,E_i) \rangle
\end{align*}
for all \( i \in \{ 1, \dotsc, n \} \). Note that \( \langle N, \tilde{R}(X,N) N \rangle = 0 \) by Proposition~2.1 of~\cite{MR5011406}. Summing over \( i \in \{ 1, \dotsc, n \} \) completes the proof.
\end{proof}

The Weit\-zen\-b{\"o}ck case has a powerful tool: the Gauss map. Let \( M \) be an oriented smooth hypersurface embedded in a Weit\-zen\-b{\"o}ck \( (n+1) \)-manifold \( \tilde{M} \) with global smooth frame \( \tilde{s} = (\tilde{E}_1, \dotsc, \tilde{E}_{n+1}) \), where \( M \) is oriented by the unit normal vector field \( N = N^1 \tilde{E}_1 + \dotsb + N^{n+1} \tilde{E}_{n+1} \) along \( M \). The \emph{Gauss map} of \( M \) is defined by the smooth map \( g = (N^1, \dotsc, N^{n+1}) \colon M \to \mathbb{S}^n \). As in Euclidean geometry, the Weingarten map is expressed in terms of the Gauss map: \( W = - \sum_{i = 1}^{n+1} \mathrm{d} N^i \otimes \tilde{E}_i \) (Proposition~3.12 of~\cite{MR5011406}). It depends only on the restriction \( \tilde{s} |_M \), not the whole \( \tilde{s} \). Also, it follows that \( M \) is totally geodesic iff the Gauss map is locally constant.

Lastly, consider surfaces. Let \( M \) be an oriented smooth surface embedded in an oriented Riemann-Cartan \( 3 \)-manifold. Let \( J \) be the almost complex structure on \( M \), and let \( \varphi = \II(\partial_z, \partial_z) \, \mathrm{d} z^2 \) be the \emph{Hopf differential} on \( M \), which is a quadratic differential. The Hodge dual of \( \tau \in \Omega^2(M,\mathbb{R}) \) produces a smooth scalar field \( {\star} \tau \) on \( M \). In~\cite{MR5011406}, it turned out that it plays the role of an ``imaginary counterpart'' to the mean curvature \( H \), so that the following definitions are introduced.
\begin{enumerate}
\item \( M \) is \emph{minimal} iff \( H + \boldsymbol{i} {\star} \tau = 0 \) everywhere.
\item \( M \) is \emph{totally umbilic} iff \( W = \frac{1}{2} (HI + ({\star} \tau) J) \), where \( I \) is the identity operator.
\end{enumerate}
These extend the usual definitions in Riemannian geometry. Since \( ({\star} \tau) J = 2W^{\mathsf{a}} \) by~\eqref{E:tau2}, they can be reformulated without \( {\star} \tau \), using ``\( HI + 2W^{\mathsf{a}} = 0 \)'' for~(a) and ``\( W^{\mathsf{s}} = \frac{1}{2} HI \)'' for~(b), which suggest definitions that make sense in higher dimensions as well. Notably, \( HI + 2W^{\mathsf{a}} \) will appear in \cref{T:A,C:A}.

It was observed that the complex-valued smooth scalar field \( H + \boldsymbol{i} {\star} \tau \) generalizes well-known fundamental results in minimal surface theory as follows.

\begin{pthrm}[Theorem~B of~\cite{MR5011406}] \label{T:Hopf-differential}
Let \( M \) be a smooth surface embedded in an oriented Riemann-Cartan\/ \( 3 \)-manifold, oriented by a unit normal vector field \( N \) along \( M \). Suppose that \( \tilde{R}(X,Y) N = JWJT(X,Y) \) for all \( X, Y \in \Gamma(TM) \). Then \( H + \boldsymbol{i} {\star} \tau \) is holomorphic on \( M \) iff the Hopf differential \( \varphi \) on \( M \) is holomorphic.
\end{pthrm}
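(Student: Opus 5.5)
The plan is to work in a local isothermal coordinate \( z = x + \boldsymbol{i} y \) on \( M \), with metric \( \lambda \lvert \mathrm{d} z \rvert^2 \). I would extend \( \langle {{}\cdot{}}, {{}\cdot{}} \rangle \), \( W \), \( \del \) and \( \tilde{R} \) complex-(bi)linearly, so that \( \langle \partial_z, \partial_z \rangle = 0 \), \( \langle \partial_z, \partial_{\bar z} \rangle = \lambda/2 \) and \( J \partial_z = -\boldsymbol{i} \partial_z \) (up to a fixed orientation convention). The goal is an identity of the form
\begin{align*}
\partial_{\bar z} \varphi_{zz} = \tfrac{\lambda}{4}\, \partial_z (H + \boldsymbol{i} {\star} \tau) + (\text{terms involving } \tilde{R}(\partial_{\bar z},\partial_z) N,\ W T(\partial_{\bar z},\partial_z) \text{ and } C),
\end{align*}
where \( \varphi_{zz} = \II(\partial_z, \partial_z) = \langle W \partial_z, \partial_z \rangle \). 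The strategy is then to show that the hypothesis \( \tilde{R}(X,Y) N = JWJT(X,Y) \) kills all the extra terms, so that \( \partial_{\bar z} \varphi_{zz} = 0 \) iff \( \partial_z (H + \boldsymbol{i} {\star} \tau) = 0 \). Since \( H + \boldsymbol{i} {\star} \tau \) is complex-valued, the latter is equivalent to antiholomorphicity; the orientation/sign convention for \( J \) and \( {\star} \) has to be chosen so that this matches ``holomorphic''. Equivalently, one can run the same computation with \( \partial_{\bar z} \) of \( H + \boldsymbol{i} {\star} \tau \) against \( \partial_z \overline{\varphi_{zz}} \).

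First I would decompose \( W = \tfrac12 H I + \tfrac12 ({\star}\tau) J + W_0 \), using~\eqref{E:tau2} and letting \( W_0 \) be trace-free symmetric. Since \( I \) and \( J \) are both complex multiples of the identity on \( \mathrm{span}\,\partial_z \), only \( W_0 \) contributes to \( \varphi_{zz} \). On the other hand, \( \langle W \partial_{\bar z}, \partial_z \rangle \) is a fixed multiple of \( \lambda (H \pm \boldsymbol{i} {\star}\tau) \), and \( W_0 \) contributes nothing there.

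Second, I would differentiate using the induced connection \( \del \), which is metric and hence commutes with \( J \) in dimension two. This gives \( \del_{\bar z} \partial_z = a \partial_z \) for some function \( a \), which equals the Levi-Civita value \( 0 \) plus a contorsion correction computable from~\eqref{E:contorsion-torsion}. Then
\begin{align*}
\partial_{\bar z} \varphi_{zz} = \langle (\del_{\bar z} W) \partial_z, \partial_z \rangle + 2a\, \varphi_{zz}.
\end{align*}
Next I would apply \cref{P:Codazzi-equation} with \( X = \partial_{\bar z} \), \( Y = \partial_z \) to replace \( (\del_{\bar z} W)\partial_z \) by \( (\del_z W)\partial_{\bar z} - \tilde{R}(\partial_{\bar z},\partial_z)N - W T(\partial_{\bar z},\partial_z) \). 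After that I would expand \( \langle (\del_z W)\partial_{\bar z}, \partial_z \rangle = \partial_z \langle W\partial_{\bar z},\partial_z\rangle - \langle W \del_z \partial_{\bar z}, \partial_z\rangle - \langle W\partial_{\bar z}, \del_z\partial_z\rangle \). This produces \( \partial_z \) of \( \lambda(H \pm \boldsymbol{i}{\star}\tau) \), together with \( \partial_z \lambda \)-terms and connection coefficients. The \( \partial_z\lambda \) terms cancel exactly as in the Riemannian computation, because \( \del_z \partial_z \) has Levi-Civita part \( (\partial_z \log\lambda)\partial_z \).

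Third, and this is the main obstacle, I have to handle what remains. One piece is the torsion corrections: \( 2a\varphi_{zz} \), the contorsion parts of \( \del_z\partial_z \) and \( \del_z\partial_{\bar z} \), and \( -\langle WT(\partial_{\bar z},\partial_z),\partial_z\rangle \). The other is the curvature term \( -\langle \tilde{R}(\partial_{\bar z},\partial_z)N, \partial_z\rangle \). Because \( T(\partial_{\bar z},\partial_z) \) is \( T(\partial_x,\partial_y) \) up to a constant factor, I expect the torsion contributions to assemble into \( \langle W T, \partial_z\rangle \)-type expressions. Substituting \( \tilde{R}N = JWJT \) and using \( J\partial_z = -\boldsymbol{i}\partial_z \), \( J^{\mathsf t} = -J \), the curvature term becomes \( \langle W JT, J\partial_z\rangle \)-type, and it should cancel them precisely. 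Concretely, I would write \( T(\partial_x,\partial_y) = \alpha \partial_x + \beta \partial_y \) (the full torsion of the surface is determined by \( \tr T \)) and check the cancellation componentwise. The \( \varphi_{zz} \)-proportional terms may not cancel but instead combine into \( b\,\varphi_{zz} \) for some function \( b \). In that case \( \partial_{\bar z}\varphi_{zz} - b\varphi_{zz} \) is a \( \bar\partial \)-operator for a holomorphic structure on \( K^2 \), and I would absorb it by replacing \( \varphi \) with \( e^{-\beta}\varphi \), where \( \partial_{\bar z}\beta = b \). This step needs comparing with the precise meaning of ``holomorphic'' for \( \varphi \) used in~\cite{MR5011406}, which is where I expect the bookkeeping to be delicate.
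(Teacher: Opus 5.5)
The paper gives no proof of this statement; it is quoted from \cite{MR5011406}, so there is no in-paper argument to compare with. Judged on its own, your route is the right one: the Codazzi equation (\cref{P:Codazzi-equation}) in isothermal coordinates, with \( \partial_{\bar z} \varphi_{zz} = \langle (\del_{\bar z} W) \partial_z, \partial_z \rangle + 2a \varphi_{zz} \). However, you leave the decisive step as an expectation, and your guess about how it resolves is not accurate. Here is how it actually closes.

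First, the convention is forced, not chosen. \( J \) is rotation by \( +\pi/2 \) and \( z \) is an oriented holomorphic coordinate, so \( J \partial_x = \partial_y \) and \( J \partial_z = \boldsymbol{i} \partial_z \). Hence \( \langle W \partial_{\bar z}, \partial_z \rangle = \tfrac{\lambda}{4} (H - \boldsymbol{i} {\star} \tau) \).

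The hypothesis enters only through the identity \( W + JWJ = 2W_0 \). This holds because \( JIJ = -I \), \( J^3 = -J \), and \( JW_0J = W_0 \) (since \( W_0 \) anticommutes with \( J \)). So Codazzi becomes \( (\del_X W) Y - (\del_Y W) X = -2 W_0 T(X,Y) \). The curvature term therefore does not cancel the torsion term. It cancels its \( (H, {\star}\tau) \)-part and doubles its \( W_0 \)-part.

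Now write \( \del_{\bar z} \partial_z = a \partial_z \), \( \del_z \partial_{\bar z} = c \partial_{\bar z} \), and \( \del_z \partial_z = e \partial_z \). Metric compatibility gives exactly \( c + e = \partial_z \log \lambda \). So the contorsion parts of \( \del_z \partial_z \) and \( \del_z \partial_{\bar z} \) cancel each other together with the \( \partial_z \lambda \)-terms, and \( \langle (\del_z W) \partial_{\bar z}, \partial_z \rangle = \tfrac{\lambda}{4} \partial_z (H - \boldsymbol{i} {\star} \tau) \).

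Also \( T(\partial_{\bar z}, \partial_z) = a \partial_z - c \partial_{\bar z} \), and \( W_0 \partial_{\bar z} \) is a multiple of \( \partial_z \). Hence \( \langle W_0 T(\partial_{\bar z}, \partial_z), \partial_z \rangle = a \varphi_{zz} \). Altogether
\begin{align*}
\partial_{\bar z} \varphi_{zz} = \tfrac{\lambda}{4} \partial_z (H - \boldsymbol{i} {\star} \tau) - 2a \varphi_{zz} + 2a \varphi_{zz} = \tfrac{\lambda}{4} \, \overline{\partial_{\bar z} (H + \boldsymbol{i} {\star} \tau)},
\end{align*}
which is the theorem. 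No componentwise expansion of \( T \) or use of \eqref{E:contorsion-torsion} is needed.

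In particular, no leftover \( b \varphi_{zz} \) appears, and this matters because your fallback would not have been legitimate. ``Holomorphic quadratic differential'' means \( \partial_{\bar z} \varphi_{zz} = 0 \) for the standard holomorphic structure, which is how it is used in \cref{C:C}. Showing that some gauge transform \( e^{-\beta} \varphi \) is holomorphic is a different statement. Similarly, your sign \( J \partial_z = -\boldsymbol{i} \partial_z \) would make the computation appear to pair holomorphicity of \( \varphi \) with antiholomorphicity of \( H + \boldsymbol{i} {\star} \tau \). That is an artifact of the wrong convention, not a freedom to be fixed by choice.
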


\begin{pthrm}[Theorem~D of~\cite{MR5011406}]
Let \( M \) be a connected oriented smooth surface embedded in a Weit\-zen\-b{\"o}ck\/ \( 3 \)-manifold. Then the Gauss map \( g \colon M \to \mathbb{S}^2 \) is conformal (i.e., angle-preserving) iff \( M \) is nowhere geodesic (i.e.,\/ \( \II \ne 0 \) everywhere) and either it is minimal or totally umbilic.
\end{pthrm}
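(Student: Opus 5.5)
The plan is to reduce conformality of \( g \) to a pointwise algebraic condition on the Weingarten map, and then classify the \( 2 \times 2 \) linear maps satisfying it.

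First I will express \( \mathrm{d} g \) through \( W \). By Proposition~3.12 of~\cite{MR5011406}, \( W = - \sum_{i=1}^{3} \mathrm{d} N^i \otimes \tilde{E}_i \). Since \( (\tilde{E}_1, \tilde{E}_2, \tilde{E}_3) \) is orthonormal, this gives
\begin{align*}
\langle \mathrm{d} g(X), \mathrm{d} g(Y) \rangle_{\mathbb{R}^3} = \sum_{i=1}^3 \mathrm{d} N^i(X) \, \mathrm{d} N^i(Y) = \langle WX, WY \rangle = \langle W^{\mathsf{t}} W X, Y \rangle
\end{align*}
for all \( X, Y \in \Gamma(TM) \). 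Thus \( g^* \langle {{}\cdot{}}, {{}\cdot{}} \rangle_{\mathbb{S}^2} \) is represented by \( W^{\mathsf{t}} W \). Angle-preserving means that \( \mathrm{d} g \) is injective and preserves angles at each point. This is equivalent to \( W^{\mathsf{t}} W = \lambda I \) with a smooth function \( \lambda > 0 \) everywhere. In particular, conformality forces \( W \neq 0 \), i.e.\ \( \II \neq 0 \), at every point, which is the nowhere-geodesic condition.

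Second, I will do the linear algebra on an oriented \( 2 \)-dimensional inner product space with complex structure \( J \). A linear map \( W \) with \( W^{\mathsf{t}} W = \lambda I \) and \( \lambda > 0 \) is \( \sqrt{\lambda} \) times an orthogonal map, so exactly one of two cases holds.
\begin{itemize}
\item If \( \det W > 0 \), then \( W = aI + bJ \). Taking the symmetric and skew parts gives \( W^{\mathsf{s}} = \frac{1}{2} HI \) and \( 2W^{\mathsf{a}} = ({\star}\tau) J \), so \( W = \frac{1}{2}(HI + ({\star}\tau)J) \). This is exactly the totally umbilic condition.
\item If \( \det W < 0 \), then \( W \) is a reflection times \( \sqrt{\lambda} \). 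Such a map is symmetric and traceless, so \( H = \tr W = 0 \) and \( {\star}\tau = 0 \) by~\eqref{E:tau2}. This is exactly \( H + \boldsymbol{i}{\star}\tau = 0 \), i.e.\ minimality.
\end{itemize}
Conversely, suppose \( W \neq 0 \) at a point. If \( W = \frac{1}{2}(HI + ({\star}\tau)J) \), then \( W^{\mathsf{t}} W = \frac{1}{4}(H^2 + ({\star}\tau)^2) I \). If \( W \) is symmetric and traceless, then \( W^2 = -(\det W) I \). In both cases \( W^{\mathsf{t}} W = \lambda I \) with \( \lambda > 0 \), which gives the reverse implication.

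Third, I will pass from pointwise to global using connectedness. Under conformality, \( \det W = K^{\mathrm{e}} \) is continuous and satisfies \( |\det W| = \lambda > 0 \), so it never vanishes. On the connected surface \( M \) it therefore has constant sign. Hence either the umbilic case holds everywhere or the minimal case holds everywhere, which gives the dichotomy in the statement. The two cases cannot coexist at a point: together they would force \( W = 0 \).

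The main obstacle I expect is the precise meaning of ``conformal (angle-preserving)''. One must check that angle preservation requires injectivity of \( \mathrm{d} g \), which rules out isolated geodesic points even where \( W \) is otherwise of the right shape. One must also get the orientation and sign conventions for \( J \) and \( {\star}\tau \) consistent with \( 2W^{\mathsf{a}} = ({\star}\tau)J \). The linear-algebra classification itself is routine.
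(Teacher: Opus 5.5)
Your argument is correct and complete. The paper gives no proof of this statement; it quotes it as Theorem~D of \cite{MR5011406} purely as background, so there is no in-paper argument to compare with.

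Your route uses exactly the facts the paper records. First, \cref{P:identification}, with \( \phi \) a fiberwise isometry, gives \( g^* \langle {{}\cdot{}}, {{}\cdot{}} \rangle_{\mathbb{S}^2} = \langle W {{}\cdot{}}, W {{}\cdot{}} \rangle \). Second, \( 2W^{\mathsf{a}} = ({\star} \tau) J \) from~\eqref{E:tau2} turns the rotation/reflection dichotomy into the totally-umbilic/minimal dichotomy.

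The one genuinely Riemann--Cartan adjustment to the classical Euclidean argument is working with \( W^{\mathsf{t}} W \) rather than \( W^2 \), because \( W \) is no longer self-adjoint. You handle that correctly. Your passage from the pointwise dichotomy to a global one, via the constant sign of \( \det W \ne 0 \) on the connected surface, is also correct.
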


\noindent These extend the following classical facts. For a connected oriented smooth surface \( M \) embedded in an oriented Riemannian \( 3 \)-manifold \( \tilde{M} \) of constant sectional curvature, \( M \) has constant mean curvature iff the Hopf differential \( \varphi \) on \( M \) is holomorphic. Also, if \( \tilde{M} = \mathbb{R}^3 \), the Gauss map \( g \colon M \to \mathbb{S}^2 \) is conformal iff \( M \) is nowhere geodesic and either it is minimal or totally umbilic.

\section{Laplacian of the Gauss map} \label{S:Laplacian}

The main concern of this section is the Laplacian of the Gauss map of an oriented smooth hypersurface embedded in a Weit\-zen\-b{\"o}ck manifold. \cref{SS:hypersurfaces} carries out the general computation for hypersurfaces, while \cref{SS:surfaces} focuses on surfaces. As the results highlight the significance of torsion-free hypersurfaces, \cref{SS:torsion-free-hypersurfaces} discusses their existence. Then \cref{SS:totally-skew-symmetric-torsion} addresses a condition on ambient manifolds. Lastly, \cref{SS:HOS-theorem} considers an application.

\subsection{Hypersurfaces} \label{SS:hypersurfaces}

Throughout this subsection, consider an oriented smooth hypersurface \( M \) embedded in a Weit\-zen\-b{\"o}ck \( (n+1) \)-manifold \( \tilde{M} \) with global smooth frame \( \tilde{s} = (\tilde{E}_1, \dotsc, \tilde{E}_{n+1}) \), where \( M \) is oriented by the unit normal vector field \( N = N^1 \tilde{E}_1 + \dotsb + N^{n+1} \tilde{E}_{n+1} \) along \( M \). The goal of this subsection is to compute the Laplacian of the Gauss map \( g = (N^1, \dotsc, N^{n+1}) \colon M \to \mathbb{S}^n \), and the result is \cref{T:A}. At each \( p \in M \), there is an orientation-preserving linear isometry \( \phi \colon T_p \tilde{M} \to \mathbb{R}^{n+1} \) sending each vector to its coefficient vector relative to the basis \( \tilde{s} |_p \), which satisfies \( \phi(N |_p) = g(p) \) and restricts to an orientation-preserving linear isometry \( \phi \colon T_p M \to T_{g(p)} \mathbb{S}^n \). This induces the following identification.

\begin{plem} \label{L:identification}
The map \( \phi \) above induces an isomorphism \( \phi \colon TM \isoto g^* T \mathbb{S}^n \) between smooth real vector bundles over \( M \) of rank \( n \). Moreover, it preserves the connection:\/ \( (g^* \del^{\mathbb{S}^n})_X (\phi(Y)) = \phi(\del_X Y) \) for all \( X, Y \in \Gamma(TM) \).
\end{plem}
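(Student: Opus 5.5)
The plan has two parts: first show that \( \phi \) is a well-defined smooth bundle isomorphism, then verify the connection compatibility by a direct computation in the global frame \( \tilde{s} \).

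For the first part, regard \( \phi \colon T\tilde{M}|_M \to M \times \mathbb{R}^{n+1} \) as the map \( V = \sum_i V^i \tilde{E}_i \mapsto (V^1, \dotsc, V^{n+1}) \). Since the frame \( \tilde{s} \) is global, orthonormal and smooth, this is a smooth isometric bundle isomorphism.

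At \( p \in M \), the tangent space \( T_{g(p)} \mathbb{S}^n \subset \mathbb{R}^{n+1} \) is the orthogonal complement of \( g(p) = \phi(N|_p) \). The space \( T_pM \) is the orthogonal complement of \( N|_p \). Because \( \phi \) is an isometry, it maps \( T_pM \) isomorphically onto \( T_{g(p)}\mathbb{S}^n \). Smoothness in \( p \) follows because \( g^* T\mathbb{S}^n \) is the smooth subbundle \( \{ (p,v) : v \perp g(p) \} \) of \( M \times \mathbb{R}^{n+1} \). Orientation preservation follows from the convention that \( (N, E_1, \dotsc, E_n) \) is oriented and from the standard orientation of the sphere by its outward normal.

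For the connection, fix \( X, Y \in \Gamma(TM) \) and write \( Y = \sum_i Y^i \tilde{E}_i \) along \( M \), with \( Y^i = \langle Y, \tilde{E}_i \rangle \), so \( \phi(Y) = (Y^1, \dotsc, Y^{n+1}) \). Since \( \tilde{\del} \tilde{E}_i = 0 \), we get \( \tilde{\del}_X Y = \sum_i X(Y^i) \tilde{E}_i \), that is, \( \phi(\tilde{\del}_X Y) = X(\phi(Y)) \), the componentwise derivative in \( \mathbb{R}^{n+1} \). The induced connection is the tangential projection, \( \del_X Y = \tilde{\del}_X Y - \langle N, \tilde{\del}_X Y \rangle N \). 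Applying \( \phi \) gives
\begin{align*}
\phi(\del_X Y) = X(\phi(Y)) - \langle g, X(\phi(Y)) \rangle g .
\end{align*}

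On the other side, \( \mathbb{S}^n \subset \mathbb{R}^{n+1} \) has the Levi-Civita connection of the round metric, which is the tangential projection of the flat connection. Hence for a section \( \sigma \) of \( g^* T\mathbb{S}^n \), viewed as an \( \mathbb{R}^{n+1} \)-valued function orthogonal to \( g \), we have \( (g^*\del^{\mathbb{S}^n})_X \sigma = X(\sigma) - \langle g, X(\sigma) \rangle g \). This is the pullback version of the identity displayed before \eqref{E:Laplacian}. Taking \( \sigma = \phi(Y) \) matches the previous display, which proves the connection statement.

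The only delicate step is justifying this pullback formula for sections of \( g^*T\mathbb{S}^n \) that are not of the form \( \mathrm{d}g(\cdot) \). I would handle it by writing \( \sigma \) locally as a combination \( \sigma = \sum_a f^a (V_a \circ g) \) of vector fields \( V_a \) on \( \mathbb{S}^n \), and then using the defining properties of the pullback connection together with the Gauss formula for \( \mathbb{S}^n \subset \mathbb{R}^{n+1} \).
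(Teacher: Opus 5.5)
Your proposal is correct and is essentially the paper's argument. Both sides of the connection identity reduce to the projection onto \( g^{\perp} \) of the componentwise derivative \( X(\phi(Y)) \): on the hypersurface side this uses \( \tilde{\del} \tilde{E}_i = 0 \) and the tangential-projection description of \( \del \), and on the sphere side the Gauss formula for \( \mathbb{S}^n \subseteq \mathbb{R}^{n+1} \). The paper simply runs the check on sections of the form \( V \circ g \) and extends by linearity, which is the same reduction you propose for justifying your formula for \( g^* \del^{\mathbb{S}^n} \).
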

\begin{proof}
The nontrivial part is the preservation of the connection. Fix \( X \in \Gamma(TM) \). Instead of \( Y \in \Gamma(TM) \), it is equivalent to show that \( (g^* \del^{\mathbb{S}^n})_X \xi = \phi(\del_X (\phi^{-1} \xi)) \) for all \( \xi \in \Gamma(g^* T \mathbb{S}^n) \). By linearity, it suffices to check the case \( \xi = g^* V = V \circ g \) for some \( V \in \Gamma(T \mathbb{S}^n) \). Let \( V = \sum_{i = 1}^{n+1} V^i e_i \in \Gamma(T \mathbb{S}^n) \), where \( V^1, \dotsc, V^{n+1} \colon \mathbb{S}^n \to \mathbb{R} \) are smooth coefficient functions and \( (e_1, \dotsc, e_{n+1}) \) is the standard basis for \( \mathbb{R}^{n+1} \). Then the two sides of \( (g^* \del^{\mathbb{S}^n})_X \xi = \phi(\del_X (\phi^{-1} \xi)) \) respectively reduce to
\begin{gather*}
\del^{\mathbb{S}^n}_{g_* X} V = (\del^{\mathbb{R}^{n+1}}_{g_* X} V)^{\top} = \sum_{i = 1}^{n+1} (g_* X)(V^i) e_i^{\top} = \sum_{i = 1}^{n+1} X(V^i \circ g) e_i^{\top}, \\
\sum_{i = 1}^{n+1} \phi(\del_X ((V^i \circ g) \tilde{E}_i)) = \sum_{i = 1}^{n+1} \phi(\tilde{\del}_X ((V^i \circ g) \tilde{E}_i))^{\top} = \sum_{i = 1}^{n+1} \phi(X(V^i \circ g) \tilde{E}_i^{\top}).
\end{gather*}
They coincide.
\end{proof}
\begin{plem} \label{L:identification-connection}
Let \( A \) be a smooth\/ \( (1,1) \)-tensor field on \( M \). Then\/ \( \del_X (\phi A) = \phi \del_X A \) for all \( X \in \Gamma(TM) \).
\end{plem}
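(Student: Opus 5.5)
The plan is to read \( \phi A \) as the composite \( \phi \circ A \), a global smooth section of \( T^* M \otimes g^* T \mathbb{S}^n \), that is, of \( \mathrm{Hom}(TM, g^* T \mathbb{S}^n) \). On the left-hand side, \( \del_X \) is the connection on this bundle induced by \( \del \) on \( TM \) (hence on \( T^* M \)) and by the pullback connection \( g^* \del^{\mathbb{S}^n} \) on \( g^* T \mathbb{S}^n \), as in \cref{SS:gradient-divergence-Laplacian}. On the right-hand side, \( \phi \) is applied pointwise to the \( (1,1) \)-tensor field \( \del_X A \). Since both sides are tensorial in their argument, it suffices to evaluate them on an arbitrary \( Y \in \Gamma(TM) \) and compare.

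First, expand the left-hand side with the Leibniz rule for the induced connection on \( \mathrm{Hom}(TM, g^* T \mathbb{S}^n) \):
\begin{align*}
(\del_X (\phi A))(Y) & = (g^* \del^{\mathbb{S}^n})_X (\phi(AY)) - \phi(A \del_X Y).
\end{align*}
Next, apply \cref{L:identification} to the vector field \( AY \in \Gamma(TM) \). This rewrites the first term as \( \phi(\del_X (AY)) \), so the right-hand side above becomes
\begin{align*}
\phi \bigl( \del_X (AY) - A \del_X Y \bigr).
\end{align*}
Finally, by the definition of the covariant derivative of a \( (1,1) \)-tensor field on \( M \), the expression inside \( \phi \) is \( (\del_X A) Y \). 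Hence \( (\del_X (\phi A))(Y) = \phi((\del_X A) Y) \) for every \( Y \), which is the claimed equality \( \del_X (\phi A) = \phi \del_X A \).

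There is no real obstacle here. The only point needing care is to be explicit about which connections are in play: on the \( TM \)-factor it must be the induced connection \( \del \), not the Levi-Civita connection \( \bar{\del} \), because \cref{L:identification} intertwines \( \del \) with \( g^* \del^{\mathbb{S}^n} \). With that convention fixed, the argument is a one-line consequence of \cref{L:identification}.
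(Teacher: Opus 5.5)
Your proposal is correct and matches the paper's approach: the paper only says the lemma follows from \cref{L:identification} by a straightforward computation, and you carry out that computation. You evaluate on \( Y \), expand with the Leibniz rule for the induced connection on \( \mathrm{Hom}(TM, g^* T \mathbb{S}^n) \), and apply \cref{L:identification} to \( AY \), correctly using the induced connection \( \del \) on \( TM \) as in \cref{T:Laplacian}.
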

\begin{proof}
This follows from \cref{L:identification} by a straightforward computation.
\end{proof}

The Weingarten map \( W \) is given by the differential of the Gauss map \( g \colon M \to \mathbb{S}^n \) under the identification \( \phi \) in \cref{L:identification} as follows.

\begin{pprop} \label{P:identification}
We have \( \phi W = - \mathrm{d} g \).
\end{pprop}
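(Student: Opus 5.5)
The plan is to verify the identity \( \phi W = - \mathrm{d} g \) pointwise, by comparing both sides as linear maps \( T_p M \to T_{g(p)} \mathbb{S}^n \subset \mathbb{R}^{n+1} \). The first step is to write out \( \mathrm{d} g \). Since \( g = (N^1, \dotsc, N^{n+1}) \), composing with the inclusion \( \hat{\iota} \colon \mathbb{S}^n \injto \mathbb{R}^{n+1} \) gives
\begin{align*}
\mathrm{d} \hat{\iota} \, \mathrm{d} g (X) = \sum_{i = 1}^{n+1} X(N^i) \, e_i
\end{align*}
for \( X \in T_p M \), where \( (e_1, \dotsc, e_{n+1}) \) is the standard basis.

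The second step is to compute \( WX \) using the Weitzenb{\"o}ck connection. Extend \( N \) arbitrarily and recall that \( W = - \tilde{\del} N \). Because \( \tilde{\del} \tilde{E}_i = 0 \), we get
\begin{align*}
\tilde{\del}_X N = \sum_{i = 1}^{n+1} X(N^i) \tilde{E}_i ,
\end{align*}
and hence \( WX = - \sum_i X(N^i) \tilde{E}_i \). This is exactly the formula \( W = - \sum_i \mathrm{d} N^i \otimes \tilde{E}_i \) quoted from Proposition~3.12 of~\cite{MR5011406}, which I may simply invoke. One should also note that this vector is tangent to \( M \). Indeed, \( \langle N, \tilde{\del}_X N \rangle = \tfrac12 X \langle N, N \rangle = 0 \) by metric compatibility, which is equivalent to \( \sum_i N^i X(N^i) = 0 \).

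The third step is to apply \( \phi \). By definition \( \phi \) sends \( \tilde{E}_i |_p \) to \( e_i \), so
\begin{align*}
\phi(WX) = - \sum_{i = 1}^{n+1} X(N^i) e_i = - \mathrm{d} \hat{\iota}\, \mathrm{d} g(X).
\end{align*}
The vector \( \phi(WX) \) is orthogonal to \( g(p) = \phi(N|_p) \), so it lies in \( T_{g(p)} \mathbb{S}^n \). This shows that both sides agree as elements of \( T_{g(p)} \mathbb{S}^n \), which proves \( \phi W = - \mathrm{d} g \).

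There is no real obstacle here. The only point requiring care is bookkeeping: one must consistently identify \( T_{g(p)} \mathbb{S}^n \) with \( g(p)^{\perp} \subset \mathbb{R}^{n+1} \), and check that the restricted \( \phi \) in \cref{L:identification} is the same map used to transport \( W \).
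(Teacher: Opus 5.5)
Your proposal is correct and follows essentially the same argument as the paper: both compose with \( \mathrm{d} \hat{\iota} \), use \( W = - \sum_i \mathrm{d} N^i \otimes \tilde{E}_i \) together with \( \phi(\tilde{E}_i |_p) = e_i \) to see that both sides become \( - \sum_i \mathrm{d} N^i |_p \otimes e_i \) in \( \mathbb{R}^{n+1} \), and conclude from the injectivity of \( \mathrm{d} \hat{\iota} |_{g(p)} \). Your extra check that \( WX \) is tangent to \( M \) is harmless, though it is not needed, since \( W \) is by definition a \( (1,1) \)-tensor field on \( M \).
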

\begin{proof}
Let \( (e_1, \dotsc, e_{n+1}) \) be the standard basis for \( \mathbb{R}^{n+1} \). Let \( \iota \colon M \injto \tilde{M} \) and \( \hat{\iota} \colon \mathbb{S}^n \injto \mathbb{R}^{n+1} \) be the inclusions. Fix \( p \in M \). Then \( \phi \circ \mathrm{d} \iota |_p = \mathrm{d} \hat{\iota} |_{g(p)} \circ \phi \). Therefore,
\begin{align*}
\mathrm{d} \hat{\iota} |_{g(p)} \circ \phi W & = \phi \circ \mathrm{d} \iota |_p W = - \phi \sum_{i = 1}^{n+1} \mathrm{d} N^i |_p \otimes \tilde{E}_i |_p = - \sum_{i = 1}^{n+1} \mathrm{d} N^i |_p \otimes e_i, \\
- \mathrm{d} \hat{\iota} |_{g(p)} \circ \mathrm{d} g |_p & = - \mathrm{d} (\hat{\iota} \circ g) |_p = - \mathrm{d} (N^1, \dotsc, N^{n+1}) |_p = - \sum_{i = 1}^{n+1} \mathrm{d} N^i |_p \otimes e_i.
\end{align*}
As \( \mathrm{d} \hat{\iota} |_{g(p)} \) is injective, this completes the proof.
\end{proof}

\begin{pthrm} \label{T:Laplacian}
Let \( \hat{g} = \hat{\iota} \circ g \colon M \to \mathbb{R}^{n+1} \), where \( \hat{\iota} \colon \mathbb{S}^n \injto \mathbb{R}^{n+1} \) is the inclusion. Then\/ \( - \phi^{-1} \Lap g = \Div W \) and\/ \( - \phi^{-1} \Lap \hat{g} = \Div W + \| W \|^2 N \).
\end{pthrm}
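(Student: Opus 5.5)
The plan is to compute the Hessian of \( g \) directly from \cref{P:identification}, which gives \( \mathrm{d} g = - \phi W \) as sections of \( T^* M \otimes g^* T \mathbb{S}^n \). Here \( \Lap g = \Lap_{\del^M, \del^{\mathbb{S}^n}} g \) is formed with the induced connection \( \del \) on \( M \) and the Levi-Civita connection of \( \mathbb{S}^n \). The connection on \( T^* M \otimes g^* T \mathbb{S}^n \) used to define \( \Hess g = \del (\mathrm{d} g) \) is induced by \( \del \) on \( T^*M \) and \( g^* \del^{\mathbb{S}^n} \) on \( g^* T \mathbb{S}^n \). By \cref{L:identification}, \( \phi \colon TM \to g^* T\mathbb{S}^n \) intertwines \( \del \) with \( g^*\del^{\mathbb{S}^n} \). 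Hence \cref{L:identification-connection} applies: for every \( X \in \Gamma(TM) \),
\begin{align*}
(\del_X (\mathrm{d} g)) = - \del_X (\phi W) = - \phi \, \del_X W .
\end{align*}
Thus \( \Hess g (X,Y) = - \phi (\del_X W) Y \).

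Taking the trace over a local orthonormal frame \( (E_1, \dotsc, E_n) \) of \( M \) gives
\begin{align*}
\Lap g = - \phi \sum_{i} (\del_{E_i} W) E_i = - \phi \, \Div W ,
\end{align*}
using the definition \( \Div W = \tr_M (\del W) \) from \cref{SS:gradient-divergence-Laplacian}. This is the first formula. Note that no symmetry of \( W \) is needed, so the nonsymmetric second fundamental form causes no trouble. The only care required is in the index convention for \( \tr_M(\del A) \): we need \( \sum_i (\del_{E_i} A) E_i \), which matches the trace of the Hessian with the derivative slot first.

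For the second formula, apply \eqref{E:Laplacian} with \( F = g \):
\begin{align*}
\Lap \hat{g} = \mathrm{d}\hat{\iota}\, \Lap g - \| \mathrm{d} g \|^2 \nu .
\end{align*}
We then identify each term through the pointwise isometry \( \phi \colon T_p \tilde{M} \to \mathbb{R}^{n+1} \):
\begin{itemize}
\item Since \( \phi \circ \mathrm{d}\iota = \mathrm{d}\hat{\iota} \circ \phi \) (as in the proof of \cref{P:identification}), we have \( \mathrm{d}\hat{\iota}\,\Lap g = -\phi(\Div W) \), with \( \Div W \) viewed in \( T\tilde M|_M \).
\item Since \( \nu \circ g = g = \phi(N) \), we have \( \nu = \phi(N) \).
\item Since \( \phi \) is an isometry, \( \| \mathrm{d} g \|^2 = \| \phi W \|^2 = \| W \|^2 \).
\end{itemize}
Hence \( \Lap \hat{g} = -\phi(\Div W + \|W\|^2 N) \), where \( \phi^{-1} \) on \( \hat{g}^*T\mathbb{R}^{n+1} \) is the fibrewise inverse of \( \phi \colon T\tilde M|_M \to \mathbb{R}^{n+1} \).

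The main (mild) obstacle is justifying that the Hessian computation commutes with \( \phi \). That is, the connection on \( T^*M\otimes g^*T\mathbb{S}^n \) applied to \( \phi W \) is \( \phi \) applied to \( \del W \), which is exactly \cref{L:identification-connection}. One must also make sure the identification of \( \nu \) and of the norms is stated at the level of \( \hat g^* T\mathbb{R}^{n+1} \). Everything else is bookkeeping.
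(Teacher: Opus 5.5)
Your proposal is correct and follows essentially the same route as the paper. You combine \cref{P:identification} with \cref{L:identification-connection} to get \( \Hess g(X,Y) = -\phi(\del_X W)Y \), trace to obtain \( -\phi^{-1}\Lap g = \Div W \), and then apply \eqref{E:Laplacian}; your explicit checks that \( \nu \circ g = \phi(N) \) and \( \| \mathrm{d} g \|^2 = \| W \|^2 \) just spell out what the paper leaves implicit in ``recall \eqref{E:Laplacian}''.
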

\begin{proof}
By \cref{P:identification,L:identification-connection}, for all \( X, Y \in \Gamma(TM) \),
\begin{align}
- \phi^{-1} (\del_X (\mathrm{d} g)) Y & = \phi^{-1} (\del_X (\phi W)) Y = \phi^{-1} (\phi \del_X W) Y = (\del_X W) Y.
\end{align}
The former equality follows. For the latter equality, recall~\eqref{E:Laplacian}.
\end{proof}

We are ready to present the main result of this subsection. Recall that there are two possible Laplacians of a smooth map defined on \( M \) and two possible divergences of a smooth \( (1,1) \)-tensor field on \( M \), depending on the connection on \( M \) (i.e., the induced one or the Levi-Civita one), unless \( M \) is torsion-free.

\begin{pthrm} \label{T:A}
Let \( M \) be an oriented smooth hypersurface embedded in a Weit\-zen\-b{\"o}ck\/ \( (n+1) \)-manifold. Let \( g \colon M \to \mathbb{S}^n \) be the Gauss map. Let \( \phi \colon TM \isoto g^* T \mathbb{S}^n \) be the identification in \cref{L:identification}. Then
\begin{align}
- \phi^{-1} \Lap g & = \Div (HI + 2W^{\mathsf{a}}) + \tr(WT)^{\sharp}, \\
- \phi^{-1} \Lap g & = \overline{\Div} (HI + 2W^{\mathsf{a}}) + \tr(WT)^{\sharp} - 2W^{\mathsf{a}} (\tr T)^{\sharp} + \tr_M T({{}\cdot{}}, W^{\mathsf{a}} {{}\cdot{}}), \\
- \phi^{-1} \bar{\Lap} g & = \Div (HI + 2W^{\mathsf{a}}) + \tr(WT)^{\sharp} + W (\tr T)^{\sharp}, \\
- \phi^{-1} \bar{\Lap} g & = \overline{\Div} (HI + 2W^{\mathsf{a}}) + \tr(WT)^{\sharp} + W^{\mathsf{t}} (\tr T)^{\sharp} + \tr_M T({{}\cdot{}}, W^{\mathsf{a}} {{}\cdot{}}),
\end{align}
where a bar denotes a quantity associated with the Levi-Civita connection on \( M \). In particular, if \( M \) is torsion-free,\/ \( - \phi^{-1} \Lap g = \Div (HI + 2W^{\mathsf{a}}) \).
\end{pthrm}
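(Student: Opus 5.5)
The plan is to begin from \cref{T:Laplacian}, which gives \( - \phi^{-1} \Lap g = \Div W \), and to rewrite \( \Div W \) using the Codazzi-type identity of \cref{C:mean-curvature}. The other three formulas then follow by converting between the two divergences with \cref{L:gradient-divergence}\labelcref{L:gradient-divergence:b} and between the two Laplacians with \cref{P:Hessian-Laplacian}.

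\emph{First formula.} I would split \( W = W^{\mathsf{t}} + 2W^{\mathsf{a}} \), so that
\[
\Div W = \Div W^{\mathsf{t}} + 2 \Div W^{\mathsf{a}} .
\]
Since the ambient manifold is Weit\-zen\-b{\"o}ck, \( \tilde{R} = 0 \), so \( \widetilde{\Ric} = 0 \). Then \cref{C:mean-curvature} reads \( \mathrm{d} H = (\Div W^{\mathsf{t}})^{\flat} - \tr(WT) \), which gives \( \Div W^{\mathsf{t}} = \Grad H + \tr(WT)^{\sharp} \). On the other hand, \( \del I = 0 \) gives \( \Div I = 0 \), so the Leibniz rule in \cref{L:gradient-divergence}\labelcref{L:gradient-divergence:a} yields \( \Div(HI) = \Grad H \). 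Combining these gives
\[
- \phi^{-1} \Lap g = \Div(HI + 2W^{\mathsf{a}}) + \tr(WT)^{\sharp},
\]
and the torsion-free statement follows immediately.

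\emph{Second formula.} I would apply \cref{L:gradient-divergence}\labelcref{L:gradient-divergence:b} to \( A = HI + 2W^{\mathsf{a}} \), whose symmetric part is \( A^{\mathsf{s}} = HI \). The terms work out as follows.
\begin{itemize}
\item \( \tfrac12 \tr_M T(\cdot, HI \cdot) = \tfrac{H}{2} \sum_i T(E_i, E_i) = 0 \), because \( T \) is skew.
\item \( \tfrac12 \tr_M T(\cdot, 2W^{\mathsf{a}} \cdot) = \tr_M T(\cdot, W^{\mathsf{a}} \cdot) \).
\item \( \tr(A^{\mathsf{s}} T)^{\sharp} = H (\tr T)^{\sharp} \).
\item \( -A(\tr T)^{\sharp} = -H(\tr T)^{\sharp} - 2W^{\mathsf{a}} (\tr T)^{\sharp} \).
\end{itemize}
The \( H(\tr T)^{\sharp} \) terms cancel, leaving
\[
\Div A = \overline{\Div}\, A + \tr_M T(\cdot, W^{\mathsf{a}} \cdot) - 2W^{\mathsf{a}} (\tr T)^{\sharp} .
\]
Substituting this into the first formula gives the second.

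\emph{Third and fourth formulas.} By \cref{P:Hessian-Laplacian} with \( N = \mathbb{S}^n \), we have \( \Lap g - \bar{\Lap} g = - \mathrm{d} g (\tr T)^{\sharp} \). By \cref{P:identification}, \( - \mathrm{d} g = \phi W \), so
\[
- \phi^{-1} \bar{\Lap} g = - \phi^{-1} \Lap g + W (\tr T)^{\sharp} .
\]
Adding \( W(\tr T)^{\sharp} \) to the first formula gives the third. Adding it to the second formula and using \( W - 2W^{\mathsf{a}} = W^{\mathsf{t}} \) gives the fourth.

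The main obstacle is bookkeeping rather than a new idea. I must make sure that \( \bar{\Lap} g \) in the statement means the Laplacian built from the Levi-Civita connection on \( M \) together with \( \del^{\mathbb{S}^n} \), so that it matches \( \Hess_{\bar{\del}^M, \del^N} \) in \cref{P:Hessian-Laplacian}. I also need to track the sign conventions in \cref{C:mean-curvature} and \cref{L:gradient-divergence} carefully, since each formula hinges on the exact cancellation of the \( H(\tr T)^{\sharp} \) terms.
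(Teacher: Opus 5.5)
Your proposal is correct and takes essentially the same route as the paper. The paper also gets the first identity from \cref{T:Laplacian}, \cref{C:mean-curvature} (with \( \widetilde{\Ric} = 0 \) by flatness) and the Leibniz rule; the second from \cref{L:gradient-divergence}\labelcref{L:gradient-divergence:b} applied to \( HI + 2W^{\mathsf{a}} \), where the \( H (\tr T)^{\sharp} \) terms cancel as you say; and the last two from \cref{P:Hessian-Laplacian} with \( \phi^{-1} \mathrm{d} g = -W \). Your worry about which target connection \( \bar{\Lap} g \) uses is moot, since \( \mathbb{S}^n \) carries only its Levi-Civita connection.
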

\begin{proof}
By \cref{L:gradient-divergence}\labelcref{L:gradient-divergence:a} and \cref{C:mean-curvature},
\begin{align}
\Div(HI) & = \Grad H = \Div W^{\mathsf{t}} - \tr(WT)^{\sharp} = \Div W - \Div(2W^{\mathsf{a}}) - \tr(WT)^{\sharp}.
\end{align}
By \cref{T:Laplacian}, the first equality follows. By \cref{L:gradient-divergence}\labelcref{L:gradient-divergence:b} and \cref{P:Hessian-Laplacian},
\begin{align}
\Div (HI + 2W^{\mathsf{a}}) & = \overline{\Div} (HI + 2W^{\mathsf{a}}) + \tr_M T({{}\cdot{}}, W^{\mathsf{a}} {{}\cdot{}}) - 2W^{\mathsf{a}} (\tr T)^{\sharp}, \\
- \phi^{-1} \Lap g & = - \phi^{-1} \bar{\Lap} g + \phi^{-1} \mathrm{d} g (\tr T)^{\sharp} = - \phi^{-1} \bar{\Lap} g - W (\tr T)^{\sharp},
\end{align}
where \( \phi^{-1} \mathrm{d} g = - W \) by \cref{P:identification}. The other equalities follow.
\end{proof}

\begin{prmrk}
The Laplacians \( \Lap \hat{g} \) and \( \bar{\Lap} \hat{g} \) of the smooth map \( \hat{g} = \hat{\iota} \circ g \colon M \to \mathbb{R}^{n+1} \) can also be obtained by \cref{T:A}, where \( \hat{\iota} \colon \mathbb{S}^n \injto \mathbb{R}^{n+1} \) is the inclusion, since \( - \phi^{-1} \Lap \hat{g} = - \phi^{-1} \Lap g + \| W \|^2 N \) and \( - \phi^{-1} \bar{\Lap} \hat{g} = - \phi^{-1} \bar{\Lap} g + \| W \|^2 N \) by~\eqref{E:Laplacian}.
\end{prmrk}

\begin{pcor} \label{C:A}
Let \( M \) be an oriented torsion-free smooth hypersurface embedded in a Weit\-zen\-b{\"o}ck\/ \( (n+1) \)-manifold. Then the Gauss map \( g \colon M \to \mathbb{S}^n \) is harmonic iff the smooth\/ \( (1,1) \)-tensor field \( HI + 2W^{\mathsf{a}} \) on \( M \) is divergence-free.
\end{pcor}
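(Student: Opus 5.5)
This is an immediate consequence of the last assertion of \cref{T:A}. For a torsion-free hypersurface, \cref{T:A} gives \( - \phi^{-1} \Lap g = \Div (HI + 2W^{\mathsf{a}}) \), where \( \phi \colon TM \isoto g^* T \mathbb{S}^n \) is the identification of \cref{L:identification}.

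The plan is first to confirm that the torsion-free case of \cref{T:A} really reduces as stated. When \( T = 0 \), the correction term \( \tr(WT)^{\sharp} \) in the first formula of \cref{T:A} vanishes identically. Moreover, the induced connection on \( M \) then coincides with the Levi-Civita connection, since the contorsion \( C \) vanishes by~\eqref{E:contorsion-torsion}. Consequently \( \Lap g = \bar{\Lap} g \) and \( \Div = \overline{\Div} \), so there is no ambiguity about which connection defines the Laplacian or the divergence in the statement.

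Next, harmonicity of \( g \) means \( \Lap g = 0 \) at every point. Since \( \phi \) is a vector bundle isomorphism, it is injective on each fiber. Hence \( \Lap g = 0 \) everywhere iff \( \phi^{-1} \Lap g = 0 \) everywhere, and by the displayed identity this holds iff \( \Div (HI + 2W^{\mathsf{a}}) = 0 \) everywhere. That is exactly the assertion that \( HI + 2W^{\mathsf{a}} \) is divergence-free.

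No step presents a real obstacle. The only point deserving care is the remark that \( \phi \) is fiberwise injective, so that the vanishing of the section \( \Lap g \in \Gamma(g^* T \mathbb{S}^n) \) transfers faithfully to the vanishing of \( \phi^{-1} \Lap g \in \Gamma(TM) \).
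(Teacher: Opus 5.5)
Your proposal is correct and matches the paper, which states \cref{C:A} without a separate proof, as an immediate consequence of the torsion-free case of \cref{T:A}, namely \( - \phi^{-1} \Lap g = \Div (HI + 2W^{\mathsf{a}}) \). Your additional checks are all valid: \( \tr(WT)^{\sharp} \) vanishes when \( T = 0 \), the induced and Levi-Civita connections coincide so the Laplacian and divergence are unambiguous, and \( \phi \) is fiberwise injective.
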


\begin{pcor}[\cite{MR0259768}] \label{C:RV}
Let \( M \) be a connected oriented smooth hypersurface embedded in\/ \( \mathbb{R}^{n+1} \). Let \( g \colon M \to \mathbb{S}^n \) be the Gauss map. Then\/ \( - \Lap g = \Grad H \). In particular, the Gauss map is harmonic iff \( M \) has constant mean curvature.
\end{pcor}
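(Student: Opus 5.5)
We regard \( \mathbb{R}^{n+1} \) as the Weit\-zen\-b{\"o}ck \( (n+1) \)-manifold given by the standard global frame \( (e_1, \dotsc, e_{n+1}) \), and then deduce the statement from \cref{T:A}. As noted after the definition of Weit\-zen\-b{\"o}ck manifolds, the Weit\-zen\-b{\"o}ck connection of this frame is the usual flat Levi-Civita connection of \( \mathbb{R}^{n+1} \). Hence the ambient torsion \( \tilde{T} \) vanishes. Moreover, the Gauss map in the sense of this paper is exactly the classical Gauss map: the coefficients of \( N \) relative to \( (e_i) \) are simply its components. Under this identification, the map \( \phi \) of \cref{L:identification} is the canonical identification of \( T_pM \subset \mathbb{R}^{n+1} \) with \( T_{g(p)}\mathbb{S}^n = g(p)^{\perp} = T_pM \). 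So the factor \( \phi^{-1} \) may be suppressed, and the claim reads \( -\Lap g = \Grad H \).

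Next, we check that the hypotheses of the torsion-free case of \cref{T:A} hold and that the term \( 2W^{\mathsf{a}} \) drops out. Since \( \tilde{T} = 0 \), we get \( \tau = \langle N, \tilde{T} \rangle = 0 \). By~\eqref{E:tau2}, this gives \( W^{\mathsf{a}} = 0 \), so \( W \) is self-adjoint. The torsion of \( M \) is the tangential part of \( \tilde{T} \), because
\[ T(X,Y) = (\tilde{\del}_X Y - \tilde{\del}_Y X)^{\top} - [X,Y] = \tilde{T}(X,Y)^{\top} = 0. \]
Thus \( M \) is torsion-free, and its induced connection is the Levi-Civita connection, so \( \Lap = \bar{\Lap} \). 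The last assertion of \cref{T:A} therefore gives \( -\phi^{-1}\Lap g = \Div(HI) \). By \cref{L:gradient-divergence}\labelcref{L:gradient-divergence:a} with \( A = I \) and \( \Div I = \tr_M(\del I) = 0 \), this equals \( \Grad H \).

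For the harmonicity statement, \( g \) is harmonic iff \( \Lap g = 0 \). Since \( \phi \) is a bundle isomorphism, this holds iff \( \Grad H = 0 \), that is, iff \( \mathrm{d}H = 0 \). Since \( M \) is connected, this is equivalent to \( H \) being constant.

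There is no real obstacle here. The only point needing a moment's care is the claim that the induced torsion on \( M \) vanishes, that is, that \( T \) is the tangential projection of \( \tilde{T} \). This follows directly from the definition of the induced connection as the tangential projection of \( \tilde{\del} \), as computed above.
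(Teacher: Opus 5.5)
Your proposal is correct and follows essentially the paper's route. You view \( \mathbb{R}^{n+1} \) as the Weit\-zen\-b{\"o}ck manifold of the standard frame, note that \( \tilde{T} = 0 \) forces \( T = 0 \) and \( W^{\mathsf{a}} = 0 \), and read off \( -\phi^{-1}\Lap g = \Div(HI) = \Grad H \) from the torsion-free case of \cref{T:A}, with connectedness giving the harmonicity equivalence.
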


\begin{prmrk} \label{R:S}
The Laplacian of the Gauss map in \cref{T:A} consists of two parts:
\begin{align}
- \phi^{-1} \bar{\Lap} g & = \underbrace{\overline{\Div} (HI + 2W^{\mathsf{a}})}_{\textnormal{differential part}} + \underbrace{\tr(WT)^{\sharp} + W^{\mathsf{t}} (\tr T)^{\sharp} + \tr_M T({{}\cdot{}}, W^{\mathsf{a}} {{}\cdot{}})}_{\textnormal{algebraic part}}.
\end{align}
Here, the algebraic part can be further simplified by introducing a skew-symmetric smooth \( (1,2) \)-tensor field \( S \) on \( M \) defined by \( S(X,Y) = T(WX,Y) + T(X,WY) \), where \( X, Y \in \Gamma(TM) \). Indeed, \( \tr(WT)^{\sharp} + W^{\mathsf{t}} (\tr T)^{\sharp} = (\tr S)^{\sharp} \), since
\begin{align*}
& \sum_{i, j = 1}^n (\langle W^{\mathsf{t}} E_i, T(E_j, E_i) \rangle E_j + \langle E_i, T(E_j, E_i) \rangle W^{\mathsf{t}} E_j) \\
& \qquad = \sum_{i, j = 1}^n (\langle E_i, T(E_j, WE_i) \rangle E_j + \langle E_i, T(WE_j, E_i) \rangle E_j),
\end{align*}
where \( (E_1, \dotsc, E_n) \) is a local oriented orthonormal smooth frame for \( M \).
\end{prmrk}

\subsection{Surfaces} \label{SS:surfaces}

This subsection focuses on the case \( n = 2 \) of \cref{SS:hypersurfaces}. The main result is \cref{T:B}. In dimension two, recall from \cref{SS:Riemann-Cartan-hypersurfaces} that \( 2W^{\mathsf{a}} = ({\star} \tau) J \), where \( J \) is the almost complex structure on the surface.

\begin{plem} \label{L:B}
Let \( M \) be an oriented smooth surface embedded in a Weit\-zen\-b{\"o}ck\/ \( 3 \)-manifold. Then
\begin{align}
\Div (HI + 2W^{\mathsf{a}}) & = \overline{\Div} (HI + 2W^{\mathsf{a}}) = \Grad H + J \Grad {\star} \tau, \label{E:B1} \\
\tr(WT)^{\sharp} + W^{\mathsf{t}} (\tr T)^{\sharp} & = H (\tr T)^{\sharp}, \label{E:B2} \\
\tr_M T({{}\cdot{}}, W^{\mathsf{a}} {{}\cdot{}}) & = ({\star} \tau) J (\tr T)^{\sharp}. \label{E:B3}
\end{align}
\end{plem}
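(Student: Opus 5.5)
The plan is to exploit the fact that on an oriented surface every skew-symmetric \( (1,2) \)-tensor field, and every \( (1,1) \)-tensor field commuting with \( J \), has a rigid normal form. Throughout, I write \( \omega \) for the area form, so that \( \omega(X,Y) = \langle JX, Y \rangle \).

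\emph{Normal form for the torsion.} Since \( T \) is skew-symmetric and \( \dim M = 2 \), there is a unique smooth vector field \( V \) on \( M \) with \( T(X,Y) = \omega(X,Y) V \) for all \( X, Y \in \Gamma(TM) \). Computing the trace of \( Y \mapsto \langle JX, Y \rangle V \) gives
\[ (\tr T)(X) = \langle JX, V \rangle = - \langle X, JV \rangle , \]
so that \( (\tr T)^{\sharp} = -JV \), equivalently \( V = J (\tr T)^{\sharp} \). All three identities will be checked by expressing both sides in terms of \( V \).

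\emph{Proof of \eqref{E:B1}.} By \eqref{E:tau2}, \( HI + 2W^{\mathsf{a}} = HI + ({\star}\tau) J \). I apply the Leibniz rule of \cref{L:gradient-divergence}\labelcref{L:gradient-divergence:a} to each of the two terms. It then suffices to show that \( \Div I = 0 \) and \( \Div J = 0 \), and the same for \( \overline{\Div} \). Both \( \del \) and \( \bar{\del} \) are metric-compatible, so parallel transport acts by orientation-preserving isometries of each tangent plane, that is, by rotations. Rotations commute with \( J \), hence \( \del J = \bar{\del} J = 0 \). Trivially \( \del I = \bar{\del} I = 0 \). Thus both divergences equal \( I \Grad H + J \Grad {\star}\tau \), as claimed.

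\emph{Proof of \eqref{E:B2}.} I compute the first term directly:
\[ \tr(WT)(X) = \tr\bigl(Y \mapsto \langle JX, Y \rangle WV\bigr) = \langle JX, WV \rangle = - \langle X, JWV \rangle , \]
so \( \tr(WT)^{\sharp} = -JWV \). For the second term, \( W^{\mathsf{t}} (\tr T)^{\sharp} = -W^{\mathsf{t}} J V \). The left-hand side of \eqref{E:B2} is therefore \( -(JW + W^{\mathsf{t}} J) V \). The key algebraic claim is that every endomorphism \( W \) of an oriented Euclidean plane satisfies
\[ JW + W^{\mathsf{t}} J = (\tr W) J . \]
The identity is linear in \( W \), so it suffices to check it on a basis: \( I \), \( J \), and the two traceless symmetric endomorphisms. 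The latter anticommute with \( J \). Granting the identity, the left-hand side equals \( -HJV = H (\tr T)^{\sharp} \).

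\emph{Proof of \eqref{E:B3}.} Using \( W^{\mathsf{a}} = \tfrac{1}{2} ({\star}\tau) J \) and a local oriented orthonormal frame \( (E_1, E_2) \), I compute
\[ \sum_{i = 1}^2 T(E_i, W^{\mathsf{a}} E_i) = \tfrac{1}{2} ({\star}\tau) \sum_{i = 1}^2 \langle JE_i, JE_i \rangle V = ({\star}\tau) V = ({\star}\tau) J (\tr T)^{\sharp} . \]

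I expect no genuine obstacle. The only points needing care are the sign conventions in \( (\tr T)^{\sharp} = -JV \) and the two-dimensional identity \( JW + W^{\mathsf{t}} J = (\tr W) J \), which together produce the correct sign in \eqref{E:B2}.
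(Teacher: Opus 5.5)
Your proof is correct, and for \eqref{E:B1} it is the paper's argument: the Leibniz rule together with \( \del J = \bar{\del} J = 0 \).

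For the other two identities your route differs.

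For \eqref{E:B2}, the paper invokes \cref{R:S}, which rewrites the left-hand side as \( (\tr S)^{\sharp} \) with \( S(X,Y) = T(WX,Y) + T(X,WY) \). It then observes that \( S = (\tr W) T \) in dimension two. Your identity \( JW + W^{\mathsf{t}} J = (\tr W) J \) is exactly this fact once \( T = \omega \otimes V \) is substituted, since \( \omega(WX,Y) + \omega(X,WY) = \langle (JW + W^{\mathsf{t}} J) X, Y \rangle \). So the underlying linear algebra is the same; you just bypass the auxiliary tensor \( S \).

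For \eqref{E:B3}, the paper does no computation at all. It applies \cref{L:gradient-divergence}\labelcref{L:gradient-divergence:b} to \( A = HI + 2W^{\mathsf{a}} \). Because the two divergences agree by \eqref{E:B1}, the algebraic difference terms must cancel, and this cancellation is precisely \( \tr_M T(\cdot, W^{\mathsf{a}} \cdot) = ({\star}\tau) J (\tr T)^{\sharp} \).

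The paper's approach is shorter given the machinery already in place. It also shows that \eqref{E:B3} is really a shadow of \( J \) being parallel for both connections.

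Your approach is self-contained and needs none of the earlier lemmas. It also exposes the structural fact behind all three identities: in dimension two the torsion is completely determined by its trace, \( T(X,Y) = \omega(X,Y)\, J (\tr T)^{\sharp} \). This explains why every algebraic term in \cref{T:B} is expressed through \( (\tr T)^{\sharp} \).
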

\begin{proof}
Since \( \del I = \del J = \bar{\del} I = \bar{\del} J = 0 \), \eqref{E:B1} follows from \cref{L:gradient-divergence}\labelcref{L:gradient-divergence:a}. The left-hand side of~\eqref{E:B2} is equal to \( (\tr S)^{\sharp} \) by \cref{R:S}, but \( S = (\tr W) T = HT \) in dimension two. Lastly, \eqref{E:B3} follows from~\eqref{E:B1} and \cref{L:gradient-divergence}\labelcref{L:gradient-divergence:b}.
\end{proof}

\begin{pthrm} \label{T:B}
Let \( M \) be an oriented smooth surface embedded in a Weit\-zen\-b{\"o}ck\/ \( 3 \)-manifold. Let \( g \colon M \to \mathbb{S}^2 \) be the Gauss map. Let \( \phi \colon TM \isoto g^* T \mathbb{S}^2 \) be the identification in \cref{L:identification}. Then
\begin{align}
- \phi^{-1} \Lap g & = \Grad H + J \Grad {\star} \tau + (HI + ({\star} \tau) J - W) (\tr T)^{\sharp}, \\
- \phi^{-1} \bar{\Lap} g & = \Grad H + J \Grad {\star} \tau + (HI + ({\star} \tau) J) (\tr T)^{\sharp},
\end{align}
where a bar denotes a quantity associated with the Levi-Civita connection on \( M \). In particular, if \( M \) is torsion-free,\/ \( - \phi^{-1} \Lap g = \Grad H + J \Grad {\star} \tau \).
\end{pthrm}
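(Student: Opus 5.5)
The plan is to specialize \cref{T:A} to \( n = 2 \) and substitute the three identities of \cref{L:B}. No new differential computation should be needed; the work is in matching terms correctly.

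For the Levi-Civita Laplacian \( \bar{\Lap} g \), I will use the fourth identity of \cref{T:A}:
\begin{align*}
- \phi^{-1} \bar{\Lap} g = \overline{\Div} (HI + 2W^{\mathsf{a}}) + \tr(WT)^{\sharp} + W^{\mathsf{t}} (\tr T)^{\sharp} + \tr_M T({{}\cdot{}}, W^{\mathsf{a}} {{}\cdot{}}).
\end{align*}
The three pieces on the right are handled by \cref{L:B} one at a time:
\begin{itemize}
\item \eqref{E:B1} replaces the divergence term by \( \Grad H + J \Grad {\star} \tau \);
\item \eqref{E:B2} replaces \( \tr(WT)^{\sharp} + W^{\mathsf{t}} (\tr T)^{\sharp} \) by \( H (\tr T)^{\sharp} \);
\item \eqref{E:B3} replaces the last term by \( ({\star} \tau) J (\tr T)^{\sharp} \).
\end{itemize}
Summing gives \( \Grad H + J \Grad {\star} \tau + (HI + ({\star} \tau) J)(\tr T)^{\sharp} \), which is the second formula.

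For \( \Lap g \), I will not return to \cref{T:A}. Instead I will use \cref{P:Hessian-Laplacian} together with \cref{P:identification}, exactly as in the proof of \cref{T:A}, which gives
\begin{align*}
- \phi^{-1} \Lap g = - \phi^{-1} \bar{\Lap} g - W (\tr T)^{\sharp}.
\end{align*}
Substituting the formula just obtained yields the coefficient \( HI + ({\star} \tau) J - W \), as claimed.

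As a consistency check, I could instead start from the third identity of \cref{T:A}. This uses \( \Div = \overline{\Div} \) on \( HI + 2W^{\mathsf{a}} \) from \eqref{E:B1}, and the relation \( W (\tr T)^{\sharp} = W^{\mathsf{t}} (\tr T)^{\sharp} + 2W^{\mathsf{a}} (\tr T)^{\sharp} \) with \( 2W^{\mathsf{a}} = ({\star} \tau) J \). It should reproduce the same result.

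The torsion-free statement follows by setting \( T = 0 \), which kills every \( (\tr T)^{\sharp} \) term. In that case \( \Lap = \bar{\Lap} \), since the induced connection is then the Levi-Civita one.

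The main obstacle is bookkeeping: keeping straight which divergence and which Laplacian appear in each identity of \cref{T:A}, and getting the sign in the \( \Lap \)–\( \bar{\Lap} \) conversion right. That sign comes from \( \phi^{-1} \mathrm{d} g = - W \).
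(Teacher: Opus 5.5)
Your proposal is correct and matches the paper's proof, which consists only of combining \cref{T:A} with \cref{L:B}. Your derivation of the \( \Lap g \) formula via \cref{P:Hessian-Laplacian} and \( \phi^{-1} \mathrm{d} g = - W \) is the same conversion already used inside the proof of \cref{T:A}, so the argument is essentially the paper's.
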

\begin{proof}
\cref{T:A,L:B}.
\end{proof}

\begin{pcor} \label{C:B}
Let \( M \) be an oriented smooth surface embedded in a Weit\-zen\-b{\"o}ck\/ \( 3 \)-manifold. If \( H + \boldsymbol{i} {\star} \tau = 0 \) everywhere, then the Gauss map \( g \colon M \to \mathbb{S}^2 \) is harmonic with respect to the Levi-Civita connection on \( M \).
\end{pcor}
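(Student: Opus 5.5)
The corollary should follow at once from the second formula of \cref{T:B}, namely
\begin{align*}
- \phi^{-1} \bar{\Lap} g & = \Grad H + J \Grad {\star} \tau + (HI + ({\star} \tau) J) (\tr T)^{\sharp}.
\end{align*}
We assume \( H + \boldsymbol{i} {\star} \tau = 0 \) everywhere on \( M \). Since \( H \) and \( {\star} \tau \) are real-valued smooth functions, this means \( H = 0 \) and \( {\star} \tau = 0 \) identically.

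The first step is to substitute these into the formula. Both functions vanish identically, so their gradients vanish, and hence \( \Grad H + J \Grad {\star} \tau = 0 \). The algebraic term also disappears, because the operator \( HI + ({\star} \tau) J \) is the zero endomorphism at every point, whatever the torsion contraction \( (\tr T)^{\sharp} \) happens to be. This gives \( \phi^{-1} \bar{\Lap} g = 0 \).

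The second step uses that \( \phi \colon TM \isoto g^* T \mathbb{S}^2 \) is a bundle isomorphism by \cref{L:identification}. Therefore \( \bar{\Lap} g = 0 \) everywhere, and \( g \) is harmonic when the Laplacian is computed with the Levi-Civita connection on \( M \).

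There is no real obstacle here. The only point that needs care is that this argument uses the barred Laplacian. The unbarred formula contains the extra term \( -W (\tr T)^{\sharp} \), and this term need not vanish under the hypothesis. That is why the corollary concerns harmonicity with respect to the Levi-Civita connection only.
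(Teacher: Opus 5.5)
Your proposal is correct and matches the paper: the corollary is stated as an immediate consequence of the second formula in \cref{T:B}. Setting \( H = {\star} \tau = 0 \) kills both the differential part and the algebraic part \( (HI + ({\star} \tau) J) (\tr T)^{\sharp} \), and \( \phi \) is a bundle isomorphism. Your remark that the leftover term \( -W (\tr T)^{\sharp} \) in the unbarred formula explains why the result concerns only the Levi-Civita Laplacian is also accurate.
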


In the torsion-free case, combining with a result of~\cite{MR5011406} yields the following equivalence of three statements.

\begin{pcor} \label{C:C}
Let \( M \) be an oriented torsion-free smooth surface embedded in a Weit\-zen\-b{\"o}ck\/ \( 3 \)-manifold. Then the following three are equivalent.
\begin{enumerate}
\item The Gauss map \( g \colon M \to \mathbb{S}^2 \) is harmonic.
\item \( H + \boldsymbol{i} {\star} \tau \) is a holomorphic function on \( M \).
\item The Hopf differential \( \varphi \) on \( M \) is a holomorphic quadratic differential.
\end{enumerate}
\end{pcor}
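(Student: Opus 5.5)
We plan to deduce \cref{C:C} by chaining two equivalences: (a)$\Leftrightarrow$(b) from \cref{T:B}, and (b)$\Leftrightarrow$(c) from \cref{T:Hopf-differential}.

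For (a)$\Leftrightarrow$(b), since $M$ is torsion-free, \cref{T:B} gives $-\phi^{-1}\Lap g = \Grad H + J\Grad{\star}\tau$, and $\phi$ is an isomorphism. So $g$ is harmonic iff $\Grad H + J\Grad {\star}\tau = 0$ everywhere. Applying $J$ (with $J^2=-I$), this is equivalent to $J\Grad H = \Grad {\star}\tau$. We then translate this into the Cauchy--Riemann equations for $f = H + \boldsymbol{i}{\star}\tau$. In an oriented isothermal chart $z = x+\boldsymbol{i}y$ with metric $\lambda^2(\mathrm{d}x^2+\mathrm{d}y^2)$, we have $J\partial_x = \partial_y$ and $\Grad u = \lambda^{-2}(u_x\partial_x + u_y\partial_y)$. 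Hence $J\Grad H = \lambda^{-2}(-H_y\partial_x + H_x\partial_y)$, and the condition $J\Grad H = \Grad{\star}\tau$ reads $({\star}\tau)_x = -H_y$ and $({\star}\tau)_y = H_x$. These are exactly the Cauchy--Riemann equations for $H + \boldsymbol{i}{\star}\tau$. The only point requiring care is sign conventions: $J$ must be rotation by $+\pi/2$ compatible with the orientation, and $\star$ must be taken with respect to that same orientation. These are the conventions under which \cref{T:B} was derived (from $2W^{\mathsf{a}} = ({\star}\tau)J$), so the signs should match. If the signs came out reversed, we would get antiholomorphicity, and we would recheck the identity $({\star}\tau)J = 2W^{\mathsf{a}}$ in \eqref{E:tau2} against $J$.

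For (b)$\Leftrightarrow$(c), we apply \cref{T:Hopf-differential}, whose hypothesis is $\tilde R(X,Y)N = JWJT(X,Y)$ for all $X,Y$. A Weitzenb\"ock manifold is flat, so $\tilde R = 0$, and $M$ is torsion-free, so $T = 0$. Both sides therefore vanish and the hypothesis holds. Since a Weitzenb\"ock manifold is oriented, the theorem applies directly and gives that $H+\boldsymbol{i}{\star}\tau$ is holomorphic iff $\varphi$ is holomorphic.

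The main obstacle is only the orientation and sign bookkeeping in the first step. Everything else reduces to citing \cref{T:B} and \cref{T:Hopf-differential}.
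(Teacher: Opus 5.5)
Your proposal is correct and follows the paper's proof. The paper likewise gets (b)$\Leftrightarrow$(c) from \cref{T:Hopf-differential}, and gets (a)$\Leftrightarrow$(b) by writing the torsion-free case of \cref{T:B}, $\Grad H + J \Grad {\star}\tau$, in an isothermal chart, where it vanishes exactly when $H + \boldsymbol{i}{\star}\tau$ satisfies the Cauchy--Riemann equations; your convention $J\partial_x = \partial_y$ matches the paper's, so the signs need no rechecking. Your explicit check that the hypothesis $\tilde{R}(X,Y)N = JWJT(X,Y)$ holds trivially because $\tilde{R} = 0$ and $T = 0$ is a detail the paper leaves implicit.
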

\begin{proof}
The equivalence of~(b) and~(c) is just \cref{T:Hopf-differential} (Theorem~B of~\cite{MR5011406}). For the equivalence of~(a) and~(b), let \( z = x + \boldsymbol{i} y \) be a local complex chart on \( M \) such that the metric on \( M \) is of the form \( \lambda^2(z) \, \mathrm{d} z \, \mathrm{d} \bar{z} \) for some positive \( \lambda \). Then \( {\big( \frac{1}{\lambda} \frac{\partial}{\partial x}, \frac{1}{\lambda} \frac{\partial}{\partial y} \big)} \) is a local oriented orthonormal smooth frame for \( M \), so that
\begin{align*}
- \phi^{-1} \Lap g & = \Grad H + J \Grad {\star} \tau \\
& = {\left( \frac{1}{\lambda} \frac{\partial H}{\partial x} \frac{1}{\lambda} \frac{\partial}{\partial x} + \frac{1}{\lambda} \frac{\partial H}{\partial y} \frac{1}{\lambda} \frac{\partial}{\partial y} \right)} + J {\left( \frac{1}{\lambda} \frac{\partial {\star} \tau}{\partial x} \frac{1}{\lambda} \frac{\partial}{\partial x} + \frac{1}{\lambda} \frac{\partial {\star} \tau}{\partial y} \frac{1}{\lambda} \frac{\partial}{\partial y} \right)} \\
& = \frac{1}{\lambda^2} {\left( \frac{\partial H}{\partial x} - \frac{\partial {\star} \tau}{\partial y} \right)} \frac{\partial}{\partial x} + \frac{1}{\lambda^2} {\left( \frac{\partial H}{\partial y} + \frac{\partial {\star} \tau}{\partial x} \right)} \frac{\partial}{\partial y}.
\end{align*}
This vanishes iff \( H + \boldsymbol{i} {\star} \tau \) satisfies the Cauchy-Riemann equations.
\end{proof}

\noindent This directly implies the following classical result.

\begin{pcor} \label{C:Hopf}
Let \( M \) be a connected oriented smooth surface embedded in\/ \( \mathbb{R}^3 \). Then the following three are equivalent.
\begin{enumerate}
\item The Gauss map \( g \colon M \to \mathbb{S}^2 \) is harmonic.
\item \( M \) has constant mean curvature.
\item The Hopf differential \( \varphi \) on \( M \) is a holomorphic quadratic differential.
\end{enumerate}
\end{pcor}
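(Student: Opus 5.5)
The plan is to specialize \cref{C:C} to the Euclidean case. Regard \( \mathbb{R}^3 \) as the Weit\-zen\-b{\"o}ck \( 3 \)-manifold given by the standard global frame \( (e_1, e_2, e_3) \). As remarked in \cref{SS:Riemann-Cartan-geometry}, its Weit\-zen\-b{\"o}ck connection is the flat Levi-Civita connection, so \( \tilde{T} = 0 \) and \( \tilde{R} = 0 \). Both vanishings will be used directly.

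The first step is to check that \( M \) satisfies the hypotheses of \cref{C:C}. The ambient torsion vanishes, so \( \tau = \langle N, \tilde{T} \rangle = 0 \) on \( M \). The induced connection on \( M \) is the tangential projection of the ambient Levi-Civita connection, which is the Levi-Civita connection of the induced metric; hence \( T = 0 \) and \( M \) is torsion-free. The Gauss map of \cref{SS:Riemann-Cartan-hypersurfaces} is then exactly the classical Gauss map \( g = (N^1, N^2, N^3) \), and the Hopf differential is the classical one. \cref{C:C} therefore applies: (a) \( g \) is harmonic iff (b') \( H + \boldsymbol{i} {\star} \tau = H \) is holomorphic iff (c) \( \varphi \) is holomorphic.

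The remaining step, which is the only non-formal point, is to show that (b') is equivalent to (b). Since \( {\star} \tau = 0 \), the function \( H \) is real-valued. A real-valued holomorphic function has vanishing imaginary part, so the Cauchy-Riemann equations force \( \partial H / \partial x = \partial H / \partial y = 0 \) in every local complex chart. Thus \( \mathrm{d} H = 0 \), and \( H \) is constant because \( M \) is connected. Conversely, a constant function is holomorphic.

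As a cross-check, one can avoid holomorphicity for (a)\( \Leftrightarrow \)(b). With \( T = 0 \) and \( \tau = 0 \), \cref{T:B} gives \( - \phi^{-1} \Lap g = \Grad H \), which is also \cref{C:RV}, and \( \phi \) is an isomorphism. So \( g \) is harmonic iff \( \Grad H = 0 \), which by connectedness holds iff \( H \) is constant.
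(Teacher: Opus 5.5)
Your proposal is correct and follows the paper's route: the paper deduces this directly from \cref{C:C} by viewing \( \mathbb{R}^3 \) with its standard frame as a flat, torsion-free Weit\-zen\-b{\"o}ck manifold, so \( M \) is torsion-free, \( \tau = 0 \), and holomorphicity of the real function \( H \) reduces to constancy on the connected \( M \). Your cross-check of (a)\( \Leftrightarrow \)(b) via \cref{T:B} (equivalently \cref{C:RV}) is also valid but not needed.
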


\subsection{Torsion-free hypersurfaces} \label{SS:torsion-free-hypersurfaces}

The significance of torsion-free hypersurfaces in Weit\-zen\-b{\"o}ck geometry is underscored by \cref{T:A,C:A} in \cref{SS:hypersurfaces} and \cref{T:B,C:C} in \cref{SS:surfaces}. Therefore, it is natural to consider whether such hypersurfaces exist. In fact, \cref{T:torsion-free} below guarantees the existence of such hypersurfaces in abundance.

\begin{pthrm} \label{T:torsion-free}
Let \( M \) be a closed orientable smooth hypersurface embedded in a parallelizable smooth manifold \( \tilde{M} \). Then there exists a Weit\-zen\-b{\"o}ck structure on \( \tilde{M} \) such that \( \tilde{T}(X,Y) = 0 \) for all \( X, Y \in \Gamma(TM) \) (in particular, \( M \) is torsion-free).
\end{pthrm}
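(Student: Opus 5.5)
We have to construct a global frame on \( \tilde{M} \) whose Weit\-zen\-b{\"o}ck torsion vanishes on pairs of vectors tangent to \( M \). For the Weit\-zen\-b{\"o}ck connection of a frame \( (\tilde{E}_1, \dotsc, \tilde{E}_{n+1}) \) we have \( \tilde{T}(\tilde{E}_i, \tilde{E}_j) = - [\tilde{E}_i, \tilde{E}_j] \). Hence \( \tilde{T}(X,Y) = 0 \) for \( X, Y \in \Gamma(TM) \) holds as soon as, near \( M \), the frame is coordinate-like in the directions of \( M \), i.e. commuting brackets. The plan is to take a frame that is parallel for a flat torsion-free structure on a tubular neighbourhood and constant in a product chart.

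\textbf{Reduction near \( M \).} Since \( M \) is closed and orientable and \( \tilde{M} \) is orientable (being parallelizable), \( M \) is two-sided. It therefore has a tubular neighbourhood \( U \cong M \times (-1,1) \) with trivial normal bundle. Because \( T\tilde{M} |_M \cong TM \oplus \underline{\mathbb{R}} \) is trivial, \( M \) is stably parallelizable. That alone does not make \( M \) flat, so a frame of commuting vector fields along \( M \) is impossible in general.

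\textbf{Only the restriction matters.} The condition \( \tilde{T}(X,Y)=0 \) for \( X,Y \) tangent to \( M \) involves only \( \tilde{\del}_X Y \) with \( X \) tangent to \( M \). This is determined by the frame restricted to \( M \), since \( \tilde{\del}_X (f^i \tilde{E}_i) = X(f^i) \tilde{E}_i \). So it suffices to find a frame \( (F_1,\dotsc,F_{n+1}) \) of \( T\tilde{M}|_M \) such that, writing \( X = \sum X^i F_i \) for tangent \( X \), the flat connection \( \del^F \) defined by \( F \) has torsion-free restriction to \( TM \). That is, the \( \mathbb{R}^{n+1} \)-valued 1-form \( \theta \colon TM \to \mathbb{R}^{n+1} \) given by the coefficients of the inclusion satisfies \( \mathrm{d}\theta = 0 \). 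The torsion is exactly \( \theta^{-1}(\mathrm{d}\theta) \)-type, because \( \tilde{T}(X,Y) = \sum \mathrm{d}\theta^i(X,Y) F_i \) on \( M \).

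\textbf{Finding an immersion.} We therefore need an injective bundle map \( TM \to M \times \mathbb{R}^{n+1} \) that is closed, hence locally exact, i.e. locally the differential of an immersion \( M \to \mathbb{R}^{n+1} \). I would take \( \theta = \mathrm{d}f \) for an immersion \( f \colon M \to \mathbb{R}^{n+1} \), and such an immersion exists by Hirsch–Smale. In codimension one, a stably parallelizable closed \( M \) immerses in \( \mathbb{R}^{n+1} \) because \( TM \oplus \underline{\mathbb{R}} \) is trivial. Then \( \mathrm{d}f \) together with the Gauss-normal gives a trivialization \( \Phi \colon T\tilde{M}|_M \to M\times\mathbb{R}^{n+1} \) sending \( TM \) to \( \mathrm{d}f(TM) \). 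Its inverse images of \( e_i \) form the frame \( F_i \) along \( M \).

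\textbf{Extension and the main obstacle.} The hard step is extending \( F \) from \( M \) to a global frame of \( \tilde{M} \). The given global frame \( \tilde{s} \) and \( F \) differ along \( M \) by a map \( h \colon M \to GL^+(n+1) \). We need \( h \) to extend over \( \tilde{M} \), which is a homotopy-theoretic obstruction and need not vanish for the Hirsch–Smale choice. The remedy I would try first is to exploit the freedom in the immersion: Hirsch–Smale says regular homotopy classes of immersions correspond to homotopy classes of bundle monomorphisms \( TM\oplus\underline{\mathbb{R}} \to \mathbb{R}^{n+1} \). So we may prescribe the trivialization to be homotopic to \( \tilde{s}|_M \), with the normal adjusted to match \( N \) up to orientation. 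Then \( h \) is null-homotopic. We extend \( h \) to the collar via the homotopy and by the identity outside, and set \( \tilde{E} = \tilde{s}\cdot h \). The tangential torsion along \( M \) then vanishes by the closedness computation above.
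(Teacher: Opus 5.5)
Your proposal is correct and follows essentially the same route as the paper. You reduce the tangential torsion to $\mathrm{d}\theta$ of the coefficient form along $M$ (the paper's use of Cartan's first equation), and use Smale--Hirsch to pick an immersion $f$ whose differential is homotopic through formal immersions to the restriction of the given coframe. Completing by unit normals then gives a null-homotopic change of basis $h$ (the paper's $G_1$), which you extend over a tubular neighbourhood by the null-homotopy and by the identity outside. As in the paper, the only point left implicit is smoothing this extension away from $M$, for example by Whitney approximation relative to a neighbourhood of $M$, which is routine.
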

\begin{proof}
Fix an orientation on \( \tilde{M} \) and a smooth vector field \( N \) along \( M \) that is nowhere tangent to \( M \), determining an orientation on \( M \). Fix any global oriented smooth coframe \( \hat{\varepsilon} \in \Omega^1(\tilde{M},\mathbb{R}^{n+1}) \). Its pullback \( \iota^* \hat{\varepsilon} \in \Omega^1(M,\mathbb{R}^{n+1}) \) by the inclusion \( \iota \colon M \injto \tilde{M} \) yields a \emph{formal immersion} \( \iota^* \hat{\varepsilon} \colon TM \to T \mathbb{R}^{n+1} \) (i.e., a smooth bundle map that is injective on each fiber) covering, e.g., the constant map \( 0 \colon M \to \mathbb{R}^{n+1} \). By the Smale-Hirsch theorem (see Theorem~3.9 of~\cite{MR1225100}, originally due to~\cites{MR0105117,MR0119214}), there exists a smooth immersion \( f \colon M \to \mathbb{R}^{n+1} \) such that both \( \mathrm{d} f \) and \( \iota^* \hat{\varepsilon} \) belong to the same path component of the space of formal immersions from \( TM \) to \( T \mathbb{R}^{n+1} \), equipped with the compact-open topology. Let \( \alpha_{{}\cdot{}} \) be a continuous path of formal immersions \( TM \to T \mathbb{R}^{n+1} \) from \( \alpha_0 = \iota^* \hat{\varepsilon} \) to \( \alpha_1 = \mathrm{d} f \). The map \( \alpha_{{}\cdot{}} \colon TM \times [0,1] \to T \mathbb{R}^{n+1} \) is continuous. Let \( v \colon T \mathbb{R}^{n+1} \surjto \mathbb{R}^{n+1} \) be the projection onto the vector part, so that \( v \alpha_0 = \iota^* \hat{\varepsilon} \) and \( v \alpha_1 = \mathrm{d} f \) as maps \( TM \to \mathbb{R}^{n+1} \).

For each \( p \in M \) and \( t \in [0,1] \), the image of the linear map \( v \alpha_t \colon T_p M \injto \mathbb{R}^{n+1} \) is an oriented hyperplane, determining a unique unit vector \( \nu_t(p) \in \mathbb{R}^{n+1} \) orthogonal to it. If \( (E_1, \dotsc, E_n) \) is a local oriented smooth frame for \( M \), \( \nu_t \) is locally given by
\begin{align}
\nu_t & = \frac{\mathrm{d} \vol_{\mathbb{R}^{n+1}} ({{}\cdot{}}, v \alpha_t E_1, \dotsc, v \alpha_t E_n)^{\sharp}}{|\mathrm{d} \vol_{\mathbb{R}^{n+1}} ({{}\cdot{}}, v \alpha_t E_1, \dotsc, v \alpha_t E_n)^{\sharp}|},
\end{align}
where \( t \in [0,1] \). Clearly, the map \( \nu_t \colon M \to \mathbb{R}^{n+1} \) is smooth for every \( t \in [0,1] \), and the map \( \nu_{{}\cdot{}} \colon M \times [0,1] \to \mathbb{R}^{n+1} \) is continuous. Let \( \tilde{\alpha}_{{}\cdot{}} \colon T \tilde{M} |_M \times [0,1] \to \mathbb{R}^{n+1} \) be the continuous map, linear in the first argument, determined by
\begin{align}
\tilde{\alpha}_t(N) & = \nu_t \quad \text{and} \quad \tilde{\alpha}_t(\iota_* X) = v \alpha_t(X) \quad \text{for all} \ X \in TM,
\end{align}
where \( t \in [0,1] \). For every \( t \in [0,1] \), the map \( \tilde{\alpha}_t \colon T \tilde{M} |_M \to \mathbb{R}^{n+1} \) is smooth, and the map \( \tilde{\alpha}_t \colon T_p \tilde{M} \to \mathbb{R}^{n+1} \) is an orientation-preserving linear bijection for all \( p \in M \). That is, for every \( t \in [0,1] \), \( \tilde{\alpha}_t \) is an oriented smooth coframe for \( \tilde{M} \) along \( M \).

Let \( G_{{}\cdot{}} \colon M \times [0,1] \to \mathrm{GL}^+_{n+1}(\mathbb{R}) \) be the continuous map given by the change-of-basis \( \tilde{\alpha}_t = G_t \tilde{\alpha}_0 \) for each \( t \in [0,1] \). Then \( G_0 = I \), and the map \( G_t \colon M \to \mathrm{GL}^+_{n+1}(\mathbb{R}) \) is smooth for every \( t \in [0,1] \). Using a uniform tubular neighborhood of \( M \) in \( \tilde{M} \), extend the smooth map \( G_1 \colon M \to \mathrm{GL}^+_{n+1}(\mathbb{R}) \) to a smooth map \( G_1 \colon \tilde{M} \to \mathrm{GL}^+_{n+1}(\mathbb{R}) \) (Theorem~5.25 of~\cite{MR3887684}, Corollary~6.27 of~\cite{MR2954043}). Finally, this yields a global oriented smooth coframe \( \tilde{\varepsilon} = G_1 \hat{\varepsilon} \) for \( \tilde{M} \). Its dual \( \tilde{s} = (\tilde{E}_1, \dotsc, \tilde{E}_{n+1}) \) is the desired parallelization for \( \tilde{M} \). Indeed, for all \( X, Y \in \Gamma(TM) \), by Cartan's first equation,
\begin{align}
\tilde{T}(X,Y) & = \sum_{i = 1}^{n+1} \mathrm{d} \tilde{\varepsilon}^i (X,Y) \tilde{E}_i = \sum_{i = 1}^{n+1} \mathrm{d} (G_1 \hat{\varepsilon})^i (X,Y) \tilde{E}_i,
\end{align}
but \( \iota^* G_1 \hat{\varepsilon} = G_1 \iota^* \hat{\varepsilon} = G_1 v \alpha_0 = G_1 \iota^* \tilde{\alpha}_0 = \iota^* \tilde{\alpha}_1 = v \alpha_1 = \mathrm{d} f \) by the construction.
\end{proof}

We now construct an explicit example of an oriented torsion-free smooth surface embedded in a Weit\-zen\-b{\"o}ck \( 3 \)-manifold, using Example~4.4 of~\cite{MR5011406}. Recall that every spatial rotation matrix in \( \mathrm{SO}(3) \) admits an axis-angle representation, called \emph{Rodrigues' rotation formula}, of the form \( \boldsymbol{e}^{\theta \hat{e}} = I + \sin(\theta) \hat{e} + (1 - \cos(\theta)) \hat{e}^2 \in \mathrm{SO}(3) \) for an angle \( \theta \in \mathbb{R} / 2 \boldsymbol{\pi} \mathbb{Z} \) and an axis \( e \in \mathbb{S}^2 \), where \( I \) is the identity matrix and
\begin{align}
\hat{e} & = \begin{pmatrix*}[c]
0 & - e^3 & e^2 \\
e^3 & 0 & - e^1 \\
- e^2 & e^1 & 0
\end{pmatrix*} \in \mathfrak{so}(3).
\end{align}
The linear bijection \( \mathbb{R}^3 \to \mathfrak{so}(3) \;\!;\, e \mapsto \hat{e} \) is called the \emph{hat map}, obtained by the cross product: \( \hat{e} x = e \times x \) for all \( e, x \in \mathbb{R}^3 \). For details, see, e.g., Subsection~4.1 of~\cite{MR5011406}.

Let \( \tilde{M} \) be the Weit\-zen\-b{\"o}ck \( 3 \)-manifold whose underlying smooth manifold is \( \mathbb{R}^3 \) and global smooth frame is \( (\tilde{E}_1, \tilde{E}_2, \tilde{E}_3) = (\partial_1, \partial_2, \partial_3) \cdot G \), where \( (\partial_1, \partial_2, \partial_3) \) is the standard global frame for \( \mathbb{R}^3 \) and \( G = \boldsymbol{e}^{\theta \hat{e}} \colon \mathbb{R}^3 \to \mathrm{SO}(3) \) is the smooth map for a smooth function \( \theta \colon \mathbb{R}^3 \to \mathbb{R} \) and a unit vector \( e \in \mathbb{S}^2 \). Let \( M = \{ z = 0 \} \subseteq \tilde{M} \) be the \( xy \)-plane with \( N = \partial_3 \). Then the torsions \( \tilde{T} \) on \( \tilde{M} \) and \( T \) on \( M \) are given by
\begin{gather}
\tilde{T}(\partial_i, \partial_j) = \theta_i e^k \partial_i + \theta_j e^k \partial_j - (\theta_i e^i + \theta_j e^j) \partial_k, \\
T(\partial_1, \partial_2) = e^3 (\theta_1 \partial_1 + \theta_2 \partial_2), \qquad (\tr T)^{\sharp} = e^3 (\theta_2 \partial_1 - \theta_1 \partial_2)
\end{gather}
for all \( (i,j,k) \in \{ (1,2,3), (2,3,1), (3,1,2) \} \), where \( \theta_i = \partial_i \theta \). Therefore, \( \tilde{M} \) has torsion unless \( \theta \) is constant, but \( M \) is torsion-free iff either \( e^3 = 0 \) or \( \theta |_M \) is constant. The case of nonconstant \( \theta |_M \colon M \to \mathbb{R} \) with \( e^3 = 0 \) is a desired example.

Now, let us compute the Laplacian of the Gauss map \( g = (G^3_1, G^3_2, G^3_3) \colon M \to \mathbb{S}^2 \). Let \( V \in \Gamma(T \mathbb{S}^2) \) be defined by \( V |_p = p \times e \in T_p \mathbb{S}^2 \) for all \( p \in \mathbb{S}^2 \). Then
\begin{align*}
\mathrm{d} g & = \sum_{i = 1}^3 (\mathrm{d} G^3_i \otimes \partial_i) = \sum_{i = 1}^3 ((\mathrm{d} \theta) (G \hat{e})^3_i \otimes \partial_i) = \sum_{i, j = 1}^3 G^3_j \hat{e}^j_i \partial_i \otimes \mathrm{d} \theta \\
& = - \sum_{i, j = 1}^3 \hat{e}^i_j g^j \partial_i \otimes \mathrm{d} \theta = - \sum_{i = 1}^3 (e \times g)^i \partial_i \otimes \mathrm{d} \theta = g^* V \otimes \mathrm{d} \theta,
\end{align*}
where \( g^* V = V \circ g = g \times e \in \Gamma(g^* T \mathbb{S}^2) \). Since \( \mathrm{d} g (\tr T)^{\sharp} = 0 \), the two Laplacians of the Gauss map \( g \colon M \to \mathbb{S}^2 \) coincide in this example by \cref{P:Hessian-Laplacian}. Meanwhile,
\begin{align*}
\sum_{i = 1}^2 \del_{\partial_i} \partial_i & = \sum_{i = 1}^2 \sum_{j = 1}^3 (\tilde{\del}_{\partial_i} (G^i_j \tilde{E}_j))^{\top} = \sum_{i = 1}^2 \sum_{j = 1}^3 (\partial_i G^i_j) \tilde{E}_j^{\top} = \sum_{i, k = 1}^2 \sum_{j = 1}^3 \theta_i (\hat{e} G)^i_j (G^k_j \partial_k) \\
& = \sum_{i, k = 1}^2 \sum_{j, \ell = 1}^3 \theta_i \hat{e}^i_{\ell} G^{\ell}_j G^k_j \partial_k = \sum_{i, k = 1}^2 \theta_i \hat{e}^i_k \partial_k = e^3 (\theta_2 \partial_1 - \theta_1 \partial_2) = (\tr T)^{\sharp}.
\end{align*}
Therefore, the Laplacian of the Gauss map \( g \colon M \to \mathbb{S}^2 \) is
\begin{align*}
\Lap g & = \sum_{i = 1}^2 (\del_{\partial_i} (\mathrm{d} g)) \partial_i = \sum_{i = 1}^2 ((g^* \del^{\mathbb{S}^2})_{\partial_i} (\mathrm{d} g (\partial_i)) - \mathrm{d} g (\del_{\partial_i} \partial_i)) \\
& = \sum_{i = 1}^2 (g^* \del^{\mathbb{S}^2})_{\partial_i} (\theta_i g^* V) = \sum_{i = 1}^2 (\theta_{ii} \, g^* V + \theta_i \del^{\mathbb{S}^2}_{\mathrm{d} g (\partial_i)} V) \\
& = \sum_{i = 1}^2 (\theta_{ii} V |_g + \theta_i^2 (\del^{\mathbb{R}^3}_{V |_g} V)^{\top}) = (\theta_{11} + \theta_{22}) (g \times e) + (\theta_1^2 + \theta_2^2) ((g \times e) \times e)^{\top} \\
& = (\theta_{11} + \theta_{22}) (g \times e) + (\theta_1^2 + \theta_2^2) e^3 (e - e^3 g),
\end{align*}
since \( g \cdot e = (Ge)^3 = e^3 \). Since \( \phi^{-1} e = e \) and \( \phi^{-1} g = N = \partial_3 \), it follows that
\begin{align}
- \phi^{-1} \Lap g & = \underbrace{(\theta_{11} + \theta_{22}) \begin{pmatrix*}[r]
e^2 \\
- e^1
\end{pmatrix*}}_{\textnormal{differential part}} + \underbrace{-(\theta_1^2 + \theta_2^2) e^3 \begin{pmatrix*}[r]
e^1 \\
e^2
\end{pmatrix*}}_{\textnormal{algebraic part}}
\end{align}
relative to the global oriented orthonormal smooth frame \( (\partial_1, \partial_2) \) for \( M \) (cf.~\cref{R:S}). Indeed, using the observation~(c) in Example~4.4 of~\cite{MR5011406}, we obtain
\begin{align}
\Grad H + J \Grad {\star} \tau & = \begin{pmatrix*}[c]
\partial_1 H - \partial_2 {\star} \tau \\
\partial_2 H + \partial_1 {\star} \tau
\end{pmatrix*} = (\theta_{11} + \theta_{22}) \begin{pmatrix*}[r]
e^2 \\ - e^1
\end{pmatrix*}, \\
(HI + ({\star} \tau) J) (\tr T)^{\sharp} & = \begin{pmatrix*}[r]
H & - {\star} \tau \\
{\star} \tau & H
\end{pmatrix*} \begin{pmatrix*}[r]
e^3 \theta_2 \\
- e^3 \theta_1
\end{pmatrix*} = -(\theta_1^2 + \theta_2^2) e^3 \begin{pmatrix*}[r]
e^1 \\
e^2
\end{pmatrix*}
\end{align}
relative to \( (\partial_1, \partial_2) \). The result agrees with \cref{T:B}.

\subsection{Totally skew-symmetric torsion} \label{SS:totally-skew-symmetric-torsion}

In \cref{SS:torsion-free-hypersurfaces}, we considered a condition imposed on hypersurfaces (namely, being torsion-free). In this subsection, we instead consider a condition on the ambient manifold. We begin by investigating a condition under which \emph{every} hypersurface is torsion-free. In fact, it will be observed that nontrivial examples of such ambient manifolds exist only in dimension three, and they are characterized by having \emph{totally skew-symmetric torsion}.

\begin{pdef} \label{D:totally-skew-symmetric}
Let \( V \) be a finite-dimensional real inner product space. A bilinear map \( A \colon V \times V \to V \) is \emph{totally skew-symmetric} iff it is a skew-symmetric bilinear map satisfying one of the following equivalent conditions.
\begin{enumerate}
\item (Orthogonality) \( A(X,Y) \perp X, Y \) for all \( X, Y \in V \).
\item (Alternation) The trilinear map \( A^{\flat} \colon V^3 \to \mathbb{R} \;\!;\, (X,Y,Z) \mapsto \langle X, A(Y,Z) \rangle \) is alternating.
\item (Cyclic symmetry) \( \langle X, A(Y,Z) \rangle = \langle Y, A(Z,X) \rangle \) for all \( X, Y, Z \in V \).
\item (Skew-adjointness) The linear endomorphism \( A(X, {{}\cdot{}}) \colon V \to V \) is skew-adjoint for every \( X \in V \).
\end{enumerate}
\end{pdef}

\begin{plem} \label{L:torsion-free}
Let \( \tilde{M} \) be a Riemann-Cartan manifold. Let \( \tilde{T} \) be the torsion of \( \tilde{M} \). Then the following two are equivalent.
\begin{enumerate}
\item Every smooth hypersurface embedded in \( \tilde{M} \) is torsion-free.
\item At every \( p \in \tilde{M} \), for any hyperplane\/ \( \Pi \subseteq T_p \tilde{M} \), we have \( \tilde{T}(X,Y) \in \Pi^{\perp} \) for all \( X, Y \in \Pi \).
\end{enumerate}
\end{plem}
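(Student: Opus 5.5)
The proof should come from relating the torsion of a hypersurface \( M \) to the tangential part of the ambient torsion. Let \( M \subseteq \tilde{M} \) be an embedded hypersurface with induced connection \( \del_X Y = (\tilde{\del}_X Y)^{\top} \) for \( X, Y \in \Gamma(TM) \). Since \( [X,Y] \) is tangent to \( M \), we get
\begin{align*}
T(X,Y) = (\tilde{\del}_X Y - \tilde{\del}_Y X)^{\top} - [X,Y] = (\tilde{T}(X,Y))^{\top}.
\end{align*}
Therefore \( M \) is torsion-free iff \( \tilde{T}(X,Y) \in (T_p M)^{\perp} \) for all \( p \in M \) and all \( X, Y \in T_p M \). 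With this identity, both directions reduce to local considerations.

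For (b) \(\Rightarrow\) (a), take any embedded hypersurface \( M \) and any \( p \in M \), and apply (b) with \( \Pi = T_p M \). Then \( \tilde{T}(X,Y) \perp T_p M \) for all \( X, Y \in T_p M \). By the identity above, \( T = 0 \) at \( p \).

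For (a) \(\Rightarrow\) (b), fix \( p \in \tilde{M} \), a hyperplane \( \Pi \subseteq T_p \tilde{M} \), and \( X, Y \in \Pi \). Construct an embedded hypersurface through \( p \) with tangent space \( \Pi \), for instance \( M = \exp_p(B_\varepsilon(0) \cap \Pi) \) for small \( \varepsilon \), using the Levi-Civita exponential map. Equivalently, take a coordinate hyperplane in a chart adapted to \( \Pi \). This is a smooth embedded hypersurface with \( T_p M = \Pi \). Extend \( X, Y \) to vector fields tangent to \( M \). By (a), \( M \) is torsion-free, so \( (\tilde{T}(X,Y))^{\top} = 0 \) at \( p \). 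Hence \( \tilde{T}(X,Y) \in \Pi^{\perp} \).

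No step here is a real obstacle. The only points to check carefully are that the induced torsion equals the tangential projection of the ambient torsion, which relies on \( [X,Y] \) being tangent, and that a local hypersurface with prescribed tangent hyperplane exists. Both are standard.
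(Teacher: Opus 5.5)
Your proposal is correct and follows essentially the same route as the paper: both directions rest on the identity \( T = \tilde{T}^{\top} \) for the induced torsion, and (a) \( \Rightarrow \) (b) is obtained by realizing \( \Pi \) as \( T_p M \) for a suitable local hypersurface \( M \). The only difference is the choice of that hypersurface. The paper takes \( M = \varphi^{-1}[S] \) for a small \( n \)-sphere \( S \) with \( T_0 S = \varphi_*[\Pi] \) in a chart, which is compact and hence properly embedded. You use the geodesic disk \( \exp_p(B_\varepsilon(0) \cap \Pi) \), which works equally well for the lemma as stated.
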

\begin{proof}
First, suppose~(a). Fix a point \( p \in \tilde{M} \) and a hyperplane \( \Pi \subseteq T_p \tilde{M} \). Let \( \varphi \colon U \to \mathbb{R}^{n+1} \) be a local chart near \( p = \varphi^{-1}(0) \). Take a sufficiently small \( n \)-sphere \( S \) in \( \mathbb{R}^{n+1} \) such that \( 0 \in S \subseteq \varphi[U] \) and \( T_0 S = \varphi_* [\Pi] \). Then \( M = \varphi^{-1}[S] \) is a smooth hypersurface embedded in \( \tilde{M} \) such that \( T_p M = \Pi \). Since \( M \) is torsion-free, \( \tilde{T}(X,Y)^{\top} = 0 \) for all \( X, Y \in T_p M \), from which~(b) follows. Conversely, suppose~(b). Let \( M \) be a smooth hypersurface embedded in \( \tilde{M} \). At every \( p \in M \), we have \( \tilde{T}(X,Y) \in (T_p M)^{\perp} \) for all \( X, Y \in T_p M \). This means that \( \tilde{T}(X,Y)^{\top} = 0 \) for all \( X, Y \in T_p M \). That is, \( M \) is torsion-free.
\end{proof}

\begin{pprop} \label{P:torsion-free}
Let \( \tilde{M} \) be a Riemann-Cartan manifold. Then every smooth hypersurface embedded in \( \tilde{M} \) is torsion-free iff one of the following three holds.
\begin{enumerate}
\item \( \dim \tilde{M} \le 2 \).
\item \( \dim \tilde{M} = 3 \) and the torsion of \( \tilde{M} \) is totally skew-symmetric everywhere.
\item \( \dim \tilde{M} \ge 4 \) and \( \tilde{M} \) is torsion-free.
\end{enumerate}
\end{pprop}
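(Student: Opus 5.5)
By \cref{L:torsion-free}, the proposition reduces to a pointwise linear-algebra question. Fix \( p \in \tilde{M} \), write \( V = T_p \tilde{M} \) with \( \dim V = m \), and let \( A = \tilde{T}|_p \colon V \times V \to V \), which is skew-symmetric. Consider the condition
\begin{equation*}
(\ast)\quad \text{for every hyperplane } \Pi \subseteq V, \ A(X,Y) \in \Pi^{\perp} \text{ for all } X, Y \in \Pi .
\end{equation*}
The plan is to show that \( (\ast) \) holds iff \( m \le 2 \), or \( m = 3 \) and \( A \) is totally skew-symmetric, or \( m \ge 4 \) and \( A = 0 \). Applying this at every point and invoking \cref{L:torsion-free} then gives the three cases (a)–(c).

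\emph{Case \( m \le 2 \).} For \( m = 1 \) the hyperplanes are zero, so \( (\ast) \) is vacuous. For \( m = 2 \) each hyperplane is a line spanned by some \( X \), and \( A(X,X) = 0 \) by skew-symmetry. So \( (\ast) \) always holds.

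\emph{Case \( m = 3 \).} Let \( A \) be totally skew-symmetric and \( X, Y \in \Pi \). By orthogonality (condition (a) of \cref{D:totally-skew-symmetric}), \( A(X,Y) \perp X, Y \). If \( X, Y \) are independent they span \( \Pi \), so \( A(X,Y) \in \Pi^{\perp} \); if they are dependent, \( A(X,Y) = 0 \). Conversely, assume \( (\ast) \). Given independent \( X, Y \), apply \( (\ast) \) to \( \Pi = \mathrm{span}(X,Y) \) to get \( A(X,Y) \perp X, Y \). In the dependent case \( A(X,Y) = 0 \). Hence condition (a) holds, and \( A \) is totally skew-symmetric.

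\emph{Case \( m \ge 4 \).} This is the main step. Fix independent \( X, Y \) and set \( Z = A(X,Y) \). Every hyperplane \( \Pi \) containing \( X \) and \( Y \) satisfies \( Z \in \Pi^{\perp} \), so \( Z \) is orthogonal to every such \( \Pi \). The hyperplanes containing \( \mathrm{span}(X,Y) \) are exactly \( w^{\perp} \) for unit vectors \( w \perp X, Y \), and \( \Pi^{\perp} = \mathrm{span}(w) \). Thus \( Z \) must be parallel to every such \( w \). Since \( m - 2 \ge 2 \), there are two linearly independent choices of \( w \), which forces \( Z = 0 \). So \( A(X,Y) = 0 \) for all independent \( X, Y \), and trivially for dependent ones, so \( A = 0 \). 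The converse in this case is immediate, since \( A = 0 \) makes \( (\ast) \) hold trivially.

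The only real content is this dimension count; the other cases are bookkeeping. The one point to check carefully is that, in the notation of (b), "every hyperplane" in \cref{L:torsion-free} ranges over all hyperplanes through \( X, Y \). This is exactly what the proof of \cref{L:torsion-free} provides, since there a hypersurface is produced with any prescribed tangent hyperplane.
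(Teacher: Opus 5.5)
Your proof is correct and is essentially the paper's argument. You reduce via \cref{L:torsion-free} to the pointwise condition, handle \( \dim \tilde{M} \le 3 \) directly from skew-symmetry and the orthogonality characterization, and for \( \dim \tilde{M} \ge 4 \) show that \( \tilde{T}(X,Y) \) lies in the normal lines of two different hyperplanes containing \( X, Y \), so it vanishes. The paper does the same dimension count with an orthonormal basis: it gets \( \langle \tilde{E}_i, \tilde{T}(\tilde{E}_1, \tilde{E}_2) \rangle = 0 \) from the hyperplane spanned by all basis vectors except some \( \tilde{E}_j \notin \{ \tilde{E}_1, \tilde{E}_2, \tilde{E}_i \} \).
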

\begin{proof}
The case \( \dim \tilde{M} \le 2 \) is trivial. The case \( \dim \tilde{M} = 3 \) is straightforward by \cref{L:torsion-free}. Consider the case \( \dim \tilde{M} \ge 4 \). The ``if'' direction is trivial. For the ``only if'' direction, it suffices to deduce from~(b) of \cref{L:torsion-free} that \( \tilde{M} \) is torsion-free. Fix \( p \in \tilde{M} \). Let \( B = \{ \tilde{E}_1, \dotsc, \tilde{E}_{n+1} \} \) be an orthonormal basis for \( T_p \tilde{M} \). The claim is that \( \tilde{T}(\tilde{E}_1, \tilde{E}_2) = 0 \). Let \( i \in \{ 1, \dotsc, n+1 \} \). As \( |B| \ge 4 \), one can pick \( \tilde{E}_j \in B \setminus \{ \tilde{E}_1, \tilde{E}_2, \tilde{E}_i \} \). If \( \Pi \subseteq T_p \tilde{M} \) is the hyperplane generated by \( B \setminus \{ \tilde{E}_j \} \), then \( \tilde{T}(\tilde{E}_1, \tilde{E}_2) \in \Pi^{\perp} \) but \( \tilde{E}_i \in \Pi \). Therefore, \( \langle \tilde{E}_i, \tilde{T}(\tilde{E}_1, \tilde{E}_2) \rangle = 0 \).
\end{proof}

The main concern of the rest of this subsection will be totally skew-symmetric torsions. The connection \( \del \) on a Riemann-Cartan manifold \( M \) with totally skew-symmetric torsion \( T \) is simply given by
\begin{align}
\del_X Y & = \bar{\del}_X Y + \tfrac{1}{2} T(X,Y) \qquad (X, Y \in \Gamma(TM)), \label{E:totally-skew-symmetric-torsion}
\end{align}
since the contorsion of \( M \) is \( C = \frac{1}{2} T \) by~\eqref{E:contorsion-torsion}, where \( \bar{\del} \) is the Levi-Civita connection on \( M \). This yields the following observations for hypersurfaces.

\begin{plem} \label{L:totally-skew-symmetric-torsion}
Let \( M \) be a smooth hypersurface embedded in an oriented Riemann-Cartan manifold \( \tilde{M} \) with totally skew-symmetric torsion, oriented by a unit normal vector field \( N \) along \( M \). Then the following five hold, where a bar denotes a quantity associated with the Levi-Civita connection.
\begin{enumerate}
\item \label{L:totally-skew-symmetric-torsion:a} The torsion \( T \) of \( M \) is totally skew-symmetric. The contorsion of \( M \) is\/ \( \frac{1}{2} T \).
\item \label{L:totally-skew-symmetric-torsion:b} For every smooth\/ \( (1,1) \)-tensor field \( A \) on \( M \),\/ \( \tr(AT)^{\sharp} = \tr_M T({{}\cdot{}}, A^{\mathsf{t}} {{}\cdot{}}) \).
\item \label{L:totally-skew-symmetric-torsion:c} The Weingarten map \( W = W^{\mathsf{s}} + W^{\mathsf{a}} \) satisfies \( W^{\mathsf{s}} = \bar{W} \) and \( W^{\mathsf{a}} = \frac{1}{2} \tilde{T}(N, {{}\cdot{}}) \).
\item \label{L:totally-skew-symmetric-torsion:d} The mean curvature satisfies \( H = \bar{H} \).
\item \label{L:totally-skew-symmetric-torsion:e} \( \Div(2W^{\mathsf{a}}) = \overline{\Div}(2W^{\mathsf{a}}) - \tr(W^{\mathsf{a}} T)^{\sharp} \) and\/ \( \overline{\Div}(2W^{\mathsf{a}}) = \tr_M ((\tilde{\del} \tilde{T}) (N, {{}\cdot{}})) \).
\end{enumerate}
\end{plem}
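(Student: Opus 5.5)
Parts (a), (b) and (d) are pointwise algebra from \cref{D:totally-skew-symmetric} and \eqref{E:contorsion-torsion}. Part (c) compares $\tilde{\del}$ with the ambient Levi-Civita connection via \eqref{E:totally-skew-symmetric-torsion}. Part (e) combines \cref{L:gradient-divergence}\labelcref{L:gradient-divergence:b} with a frame computation.

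For (a), I would first note that the torsion of $M$ is the tangential projection of $\tilde{T}$:
\[
T(X,Y) = (\tilde{\del}_X Y - \tilde{\del}_Y X - [X,Y])^{\top} = \tilde{T}(X,Y)^{\top},
\]
since $[X,Y]$ is tangent. By orthogonality, $\tilde{T}(X,Y) \perp X, Y$, so its tangential part is also orthogonal to $X$ and $Y$. Hence $T$ is totally skew-symmetric, and \eqref{E:contorsion-torsion} gives $C = \frac12 T$. For (b), take a local orthonormal frame $(E_i)$ for $M$. Then
\[
(\tr(AT))(X) = \sum_i \langle A^{\mathsf{t}} E_i, T(X,E_i) \rangle = \sum_i \langle X, T(E_i, A^{\mathsf{t}} E_i) \rangle,
\]
where the second equality uses alternation. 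This is exactly $\langle X, \tr_M T(\cdot, A^{\mathsf{t}}\cdot) \rangle$. In particular, taking $A = I$ gives $\tr T = 0$.

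For (c), apply \eqref{E:totally-skew-symmetric-torsion} in $\tilde{M}$:
\[
\II(X,Y) = \langle N, \bar{\tilde{\del}}_X Y \rangle + \tfrac12 \langle N, \tilde{T}(X,Y) \rangle = \bar{\II}(X,Y) + \tfrac12 \tau(X,Y).
\]
The first term is symmetric and the second is skew, so $W^{\mathsf{s}} = \bar{W}$. By cyclic symmetry, $\langle W^{\mathsf{a}} X, Y \rangle = \frac12 \langle N, \tilde{T}(X,Y) \rangle = \frac12 \langle \tilde{T}(N,X), Y \rangle$. Moreover $\tilde{T}(N,X) \perp N$, so $\tilde{T}(N,X)$ is tangent and $W^{\mathsf{a}} = \frac12 \tilde{T}(N,\cdot)$. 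Part (d) follows because $\tr W^{\mathsf{a}} = 0$.

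For the first identity in (e), I would apply \cref{L:gradient-divergence}\labelcref{L:gradient-divergence:b} with $C = \frac12 T$ and $\tr_M C = (\tr T)^{\sharp} = 0$. This gives $\Div A - \overline{\Div}\, A = \frac12 \tr_M T(\cdot, A\cdot)$. Setting $A = 2W^{\mathsf{a}}$ and using (b) with $(W^{\mathsf{a}})^{\mathsf{t}} = -W^{\mathsf{a}}$ yields $-\tr(W^{\mathsf{a}}T)^{\sharp}$.

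The second identity, $\overline{\Div}(2W^{\mathsf{a}}) = \tr_M((\tilde{\del}\tilde{T})(N,\cdot))$, is where I expect the real work. I would fix $p$, choose a local orthonormal frame $(E_i)$ with $\bar{\del} E_i|_p = 0$, extend $N$, and pair against a tangent $Y$ with $\bar{\del} Y|_p = 0$. Then I would expand
\[
\langle (\bar{\del}_{E_i}(2W^{\mathsf{a}}))E_i, Y \rangle = E_i \langle \tilde{T}(N,E_i), Y \rangle.
\]
Next I would rewrite this as $\langle (\tilde{\del}_{E_i}\tilde{T})(N,E_i), Y \rangle$ plus three kinds of correction terms:
\begin{enumerate}
\item terms with $\tilde{\del}_{E_i} N = -WE_i$;
\item terms with $\tilde{\del}_{E_i} E_i$ and $\tilde{\del}_{E_i} Y$, which by \eqref{E:totally-skew-symmetric-torsion} and the Gauss formula equal $\frac12 \tilde{T}$-terms plus normal $\II$-terms;
\item the resulting cross terms.
\end{enumerate}
I would then show that the correction terms cancel after summing over $i$. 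The $W^{\mathsf{s}}$ pieces should vanish because they are contracted against a skew expression. The remaining pieces are quadratic in $\tilde{T}$ and should cancel by cyclic symmetry, giving pairs such as $\langle \tilde{T}(N,E_i), \tilde{T}(E_i,Y) \rangle$ against their transposes. Keeping the normal components straight, namely $\langle N, \tilde{T}(E_i,Y) \rangle$ paired with $\tilde{T}(N,\cdot)$, is the delicate bookkeeping step.
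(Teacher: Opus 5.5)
Your proposal is correct. For (a)--(d) and the first identity in (e) it matches the paper's argument essentially line for line; for the second identity in (e) it takes a different route.

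\textbf{The paper's route.} The paper never computes $\overline{\Div}(2W^{\mathsf{a}})$ directly. It computes $(\del_{E_i} 2W^{\mathsf{a}})E_i$ with the \emph{induced} connection, in an arbitrary orthonormal frame, using the Gauss formula $\del_X Y = \tilde{\del}_X Y - \II(X,Y)N$ and $\tilde{\del}_{E_i}N = -WE_i$. This gives $\Div(2W^{\mathsf{a}}) = \tr_M((\tilde{\del}\tilde{T})(N,\cdot)) + \tr_M T(\cdot, W\cdot)$. By (b) and $\tr(W^{\mathsf{s}}T)^{\sharp} = 0$, the last term equals $-\tr(W^{\mathsf{a}}T)^{\sharp}$, which is exactly the correction in the first identity, so the second identity follows. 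The extra term is thus absorbed by the $\Div$-versus-$\overline{\Div}$ comparison already proved. No special frame and no ambient Levi-Civita connection are needed, and the formula for $\Div(2W^{\mathsf{a}})$ comes out as a byproduct.

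\textbf{Your route.} You compute $\overline{\Div}(2W^{\mathsf{a}})$ directly in a $\bar{\del}$-normal frame, and the cancellation you anticipate does occur. At $p$:
\begin{itemize}
\item the $\tilde{\del}_{E_i}E_i$ term is $\bar{\II}(E_i,E_i)\,\tilde{T}(N,N) = 0$;
\item the $\bar{\II}(E_i,Y)N$ part of $\tilde{\del}_{E_i}Y$ pairs to zero with $\tilde{T}(N,E_i) \perp N$;
\item the $W^{\mathsf{s}}$ part of $-\sum_i \langle \tilde{T}(WE_i,E_i), Y\rangle$ vanishes;
\item by cyclic symmetry its $W^{\mathsf{a}}$ part equals $-\frac{1}{2}\sum_i \langle \tilde{T}(N,E_i), \tilde{T}(E_i,Y)\rangle$, which cancels the $+\frac{1}{2}\sum_i \langle \tilde{T}(N,E_i), \tilde{T}(E_i,Y)\rangle$ coming from $\frac{1}{2}\tilde{T}(E_i,Y)$.
\end{itemize}
This makes the second identity independent of the first, at the cost of the normal frame and the quadratic bookkeeping.

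\textbf{One point to add.} Pairing with tangent $Y$ only identifies tangential components, so you also need $\tr_M((\tilde{\del}\tilde{T})(N,\cdot))$ to be tangent. This holds because $\tilde{\del}_X \tilde{T}$ is again totally skew-symmetric, by metric compatibility of $\tilde{\del}$. Hence $\langle N, (\tilde{\del}_{E_i}\tilde{T})(N,E_i)\rangle = 0$. In the paper this comes for free, since its computation is a full vector identity whose left-hand side is tangent.
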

\begin{proof}
Let \( (E_1, \dotsc, E_n) \) be a local oriented orthonormal smooth frame for \( M \). (a) is immediate from \( T = \tilde{T}^{\top} \). For a smooth \( (1,1) \)-tensor field \( A \) on \( M \),
\begin{align*}
\langle E_i, A T(E_j, E_i) \rangle E_j & = \langle A^{\mathsf{t}} E_i, T(E_j, E_i) \rangle E_j = \langle E_j, T(E_i, A^{\mathsf{t}} E_i) \rangle E_j
\end{align*}
for all \( i, j \in \{ 1, \dotsc, n \} \). Summing over \( i, j \in \{ 1, \dotsc, n \} \) yields~(b). \eqref{E:totally-skew-symmetric-torsion} implies~(c), and~(c) implies~(d). Since \( (\tr T)^{\sharp} = 0 \) and \( \tr_M T({{}\cdot{}}, W^{\mathsf{a}} {{}\cdot{}}) = - \tr(W^{\mathsf{a}} T)^{\sharp} \) by~(b), the former equality of~(e) follows from \cref{L:gradient-divergence}\labelcref{L:gradient-divergence:b}. By~(c) and~\eqref{E:tau2},
\begin{align*}
(\del_{E_i} 2W^{\mathsf{a}}) E_i & = \del_{E_i} (2W^{\mathsf{a}} E_i) - 2W^{\mathsf{a}} \del_{E_i} E_i = \del_{E_i} (\tilde{T}(N, E_i)) - \tilde{T}(N, \del_{E_i} E_i) \\
& = \tilde{\del}_{E_i} (\tilde{T}(N, E_i)) - \II(E_i, \tilde{T}(N, E_i)) N - \tilde{T}(N, \tilde{\del}_{E_i} E_i) \\
& = (\tilde{\del}_{E_i} \tilde{T}) (N, E_i) + \tilde{T}(\tilde{\del}_{E_i} N, E_i) - \langle WE_i, 2W^{\mathsf{a}} E_i \rangle N \\
& = (\tilde{\del}_{E_i} \tilde{T}) (N, E_i) + \tilde{T}(E_i, WE_i) - \tau(E_i, WE_i) N \\
& = (\tilde{\del}_{E_i} \tilde{T}) (N, E_i) + T(E_i, WE_i)
\end{align*}
for all \( i \in \{ 1, \dotsc, n \} \). Summing over \( i \in \{ 1, \dotsc, n \} \) yields
\begin{align*}
\Div(2W^{\mathsf{a}}) & = \tr_M ((\tilde{\del} \tilde{T}) (N, {{}\cdot{}})) + \tr_M T({{}\cdot{}}, W {{}\cdot{}}) = \tr_M ((\tilde{\del} \tilde{T}) (N, {{}\cdot{}})) + \tr(W^{\mathsf{t}} T)^{\sharp}
\end{align*}
by~(b), but \( \tr(W^{\mathsf{s}} T)^{\sharp} = 0 \) by \cref{L:contraction}\labelcref{L:contraction:b}. The latter equality of~(e) follows.
\end{proof}

Consequently, we obtain the following result.

\begin{pthrm} \label{T:D}
Let \( M \) be an oriented smooth hypersurface embedded in a Weit\-zen\-b{\"o}ck\/ \( (n+1) \)-manifold with parallel totally skew-symmetric torsion. Let \( g \colon M \to \mathbb{S}^n \) be the Gauss map, and let \( \hat{g} = \hat{\iota} \circ g \colon M \to \mathbb{R}^{n+1} \), where \( \hat{\iota} \colon \mathbb{S}^n \injto \mathbb{R}^{n+1} \) is the inclusion. Let \( \phi \colon TM \isoto g^* T \mathbb{S}^n \) be the identification in \cref{L:identification}. Then
\begin{align}
- \phi^{-1} \Lap g & = \Grad H, \qquad - \phi^{-1} \Lap \hat{g} = \Grad H + \| W \|^2 N.
\end{align}
Also,\/ \( \Lap g = \bar{\Lap} g \),\/ \( \Lap \hat{g} = \bar{\Lap} \hat{g} \), \( H = \bar{H} \), and\/ \( \| W \|^2 = \| \bar{W} \|^2 + \widetilde{\Ric}(N,N) \), where a bar denotes a quantity associated with the Levi-Civita connection and\/ \( \widetilde{\Ric} \) denotes the Ricci curvature of the ambient Levi-Civita connection.
\end{pthrm}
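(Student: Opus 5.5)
Start from the first identity of \cref{T:A}, $-\phi^{-1}\Lap g = \Div(HI+2W^{\mathsf{a}}) + \tr(WT)^{\sharp}$, and simplify each term using \cref{L:totally-skew-symmetric-torsion}.

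First, $\Div(HI) = \Grad H$ by \cref{L:gradient-divergence}\labelcref{L:gradient-divergence:a}. By \cref{L:totally-skew-symmetric-torsion}\labelcref{L:totally-skew-symmetric-torsion:e},
\[
\Div(2W^{\mathsf{a}}) = \tr_M((\tilde\del\tilde T)(N,\cdot)) - \tr(W^{\mathsf{a}}T)^{\sharp},
\]
and the first term vanishes because $\tilde T$ is parallel. The next step is to show $\tr(WT)^{\sharp} = \tr(W^{\mathsf{a}}T)^{\sharp}$, so that this contribution cancels. Since $\tr(AT)$ is linear in $A$, it suffices to show $\tr(W^{\mathsf{s}}T)=0$. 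By \cref{L:totally-skew-symmetric-torsion}\labelcref{L:totally-skew-symmetric-torsion:b}, $\tr(W^{\mathsf{s}}T)^{\sharp} = \tr_M T(\cdot, W^{\mathsf{s}}\cdot)$. In a frame diagonalizing $W^{\mathsf{s}}$ this is $\sum_i \lambda_i T(E_i,E_i)=0$; this is also the fact noted at the end of the proof of \cref{L:totally-skew-symmetric-torsion}. Hence $-\phi^{-1}\Lap g = \Grad H$. The formula for $\hat g$ then follows from \cref{T:Laplacian}, or from \eqref{E:Laplacian}.

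For $\Lap g = \bar\Lap g$, use \cref{P:Hessian-Laplacian}. The difference is $-\mathrm dg(\tr T)^{\sharp}$, and $\tr T = 0$ because $T$ is totally skew-symmetric by \cref{L:totally-skew-symmetric-torsion}\labelcref{L:totally-skew-symmetric-torsion:a}: each $T(X,\cdot)$ is skew-adjoint, so it is traceless. The same reasoning through \eqref{E:Laplacian} gives $\Lap\hat g=\bar\Lap\hat g$, since the $\|\mathrm dg\|^2\nu$ term does not depend on the connection on $M$. The equality $H=\bar H$ is \cref{L:totally-skew-symmetric-torsion}\labelcref{L:totally-skew-symmetric-torsion:d}.

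The main obstacle is the identity $\|W\|^2=\|\bar W\|^2+\widetilde{\Ric}(N,N)$, where $\widetilde{\Ric}$ is the Levi-Civita Ricci curvature of $\tilde M$. Since $W^{\mathsf{s}}=\bar W$ and the symmetric and skew parts are orthogonal, $\|W\|^2=\|\bar W\|^2+\|W^{\mathsf{a}}\|^2$. By \cref{L:totally-skew-symmetric-torsion}\labelcref{L:totally-skew-symmetric-torsion:c}, $\|W^{\mathsf{a}}\|^2=\frac14\sum_i|\tilde T(N,E_i)|^2$, and because $\tilde T(N,N)=0$ this equals $\frac14\sum_{\alpha}|\tilde T(N,\tilde E_\alpha)|^2$ over an ambient orthonormal frame.

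It therefore remains to show that the Levi-Civita Ricci curvature of $\tilde M$ satisfies $\overline{\widetilde{\Ric}}(X,X)=\frac14\sum_\alpha|\tilde T(X,\tilde E_\alpha)|^2$. For this, apply \eqref{E:curvature-contorsion} on $\tilde M$ with $\tilde R=0$, $C=\frac12\tilde T$ and $\tilde\del\tilde T=0$. First convert $\bar\del C$ to $\tilde\del C$: one has $\bar\del_X C=\tilde\del_X C-[\tfrac12\tilde T(X,\cdot),C]$-type correction terms. Then take the trace. With total skew-symmetry, the terms should combine to
\[
\bar R(X,Y)Z=-\tfrac14\tilde T(\tilde T(X,Y),Z),
\]
the standard formula for a flat connection with parallel skew torsion. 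Using the Jacobi-type identity that parallelism and flatness force via the first Bianchi identity, tracing gives $\overline{\widetilde{\Ric}}(X,X)=\frac14\sum_\alpha|\tilde T(X,\tilde E_\alpha)|^2$.

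I expect carrying these signs carefully through \eqref{E:curvature-contorsion} to be the hardest step. The first thing I would try is to compute directly in the parallel frame $\tilde s$, where $\tilde\del\tilde E_\alpha=0$ and $\tilde T$ has constant components. There, $\bar\del_X\tilde E_\alpha=-\frac12\tilde T(X,\tilde E_\alpha)$, so the Levi-Civita curvature can be read off from brackets.
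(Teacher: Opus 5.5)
Your proposal is correct and follows essentially the same route as the paper. It uses \cref{T:A} with \cref{L:totally-skew-symmetric-torsion} (via $\tilde{\del}\tilde{T}=0$ and $\tr(W^{\mathsf{s}}T)=0$) for the Laplacian, \cref{P:Hessian-Laplacian} with $\tr T=0$ for $\Lap g=\bar{\Lap} g$, and \eqref{E:curvature-contorsion} with contorsion $\frac{1}{2}\tilde{T}$ for the Ricci term. The only difference is in that last step: the paper computes just $\tilde{R}(X,N)N=\frac{1}{4}\tilde{T}(N,\tilde{T}(X,N))$, where the terms containing $\tilde{T}(N,N)=0$ drop out and no Jacobi identity is needed, whereas your full formula $\bar{R}(X,Y)Z=-\frac{1}{4}\tilde{T}(\tilde{T}(X,Y),Z)$ (which is correct) requires the additional first-Bianchi step.
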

\begin{proof}
By \cref{L:totally-skew-symmetric-torsion}\labelcref{L:totally-skew-symmetric-torsion:b}, \( (\tr T)^{\sharp} = 0 \) and \( \tr_M T({{}\cdot{}}, W^{\mathsf{a}} {{}\cdot{}}) = - \tr(W^{\mathsf{a}} T)^{\sharp} \). Also, by \cref{L:totally-skew-symmetric-torsion}\labelcref{L:totally-skew-symmetric-torsion:a} and \cref{L:contraction}\labelcref{L:contraction:b}, \( \tr(W^{\mathsf{s}} T)^{\sharp} = 0 \). These three equalities imply that \( - \phi^{-1} \Lap g = \overline{\Div} (HI + 2W^{\mathsf{a}}) \) by \cref{T:A}, but \( \overline{\Div}(2W^{\mathsf{a}}) = 0 \) by \cref{L:totally-skew-symmetric-torsion}\labelcref{L:totally-skew-symmetric-torsion:e}. The first equality follows, and hence the second equality follows from \cref{T:Laplacian}. Now, \( \Lap g = \bar{\Lap} g \) and \( \Lap \hat{g} = \bar{\Lap} \hat{g} \) by \cref{P:Hessian-Laplacian}, and \( H = \bar{H} \) by \cref{L:totally-skew-symmetric-torsion}\labelcref{L:totally-skew-symmetric-torsion:d}. Since \( \| W \|^2 = \| W^{\mathsf{s}} \|^2 + \| W^{\mathsf{a}} \|^2 \), it remains to show that \( \widetilde{\Ric}(N,N) = \tfrac{1}{4} \| \tilde{T}(N, {{}\cdot{}}) \|^2 \) by \cref{L:totally-skew-symmetric-torsion}\labelcref{L:totally-skew-symmetric-torsion:c}, where \( \tilde{T} \) is the ambient torsion. Using the ambient contorsion \( \frac{1}{2} \tilde{T} \), one can deduce from~\eqref{E:curvature-contorsion} that the (Levi-Civita) ambient curvature \( \tilde{R} \) satisfies \( \tilde{R}(X,N) N = \frac{1}{4} \tilde{T}(N, \tilde{T}(X,N)) \) for all \( X \in \Gamma(TM) \). Then \( \widetilde{\tr} (\tilde{R}({{}\cdot{}}, N) N) \) is equal to
\begin{align*}
\tr (\tilde{R}({{}\cdot{}}, N) N) & = \tfrac{1}{4} \tr_M \langle {{}\cdot{}}, \tilde{T}(N, \tilde{T}({{}\cdot{}}, N)) \rangle = \tfrac{1}{4} \tr_M \langle \tilde{T}(N, {{}\cdot{}}), \tilde{T}(N, {{}\cdot{}}) \rangle,
\end{align*}
where \( \widetilde{\tr} \) denotes the ambient trace.
\end{proof}

This recovers the result of~\cite{MR2002821} about ambient Lie groups as follows.

\begin{pcor}[Theorem~1 of~\cite{MR2002821}] \label{C:Lie-group}
Let \( M \) be a smooth hypersurface embedded in an oriented Lie group \( G \) with a bi-invariant metric, oriented by a unit normal vector field \( N \) along \( M \). Then the Laplacian of the smooth map \( \bar{g} \colon M \to \mathfrak{g} \) defined by \( \bar{g}(p) = \mathrm{d} L_p^{-1} (N |_p) \) is given by
\begin{align}
- \Lap \bar{g} |_p & = \mathrm{d} L_p^{-1} (\Grad \bar{H}) + {\big( \| \bar{W} \|^2 + \widetilde{\Ric}(N,N) \big)} \, \bar{g}(p) \qquad (p \in M).
\end{align}
\end{pcor}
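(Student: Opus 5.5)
The plan is to realize \( G \) as a Weit\-zen\-b{\"o}ck manifold satisfying the hypotheses of \cref{T:D}, and then translate that theorem's statement about \( \hat{g} \) into the statement about \( \bar{g} \).

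\textbf{Step 1: the Weit\-zen\-b{\"o}ck structure.} Choose an oriented orthonormal basis \( (e_1, \dotsc, e_{n+1}) \) of \( \mathfrak{g} \) with respect to the bi-invariant metric. Let \( \tilde{E}_i \) be the corresponding left-invariant vector fields. This frame is orthonormal for the given metric, so the induced Weit\-zen\-b{\"o}ck metric is the bi-invariant metric itself, and the orientation agrees.

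\textbf{Step 2: the torsion is parallel and totally skew-symmetric.} For left-invariant \( X, Y \) we have \( \tilde{\del} X = \tilde{\del} Y = 0 \), hence
\[
\tilde{T}(X,Y) = -[X,Y].
\]
Since this tensor sends left-invariant fields to left-invariant fields, it has constant coefficients in the parallel frame \( (\tilde{E}_i) \), so \( \tilde{\del} \tilde{T} = 0 \). Bi-invariance of the metric means that \( \mathrm{ad}_X \) is skew-adjoint, which is condition~(d) of \cref{D:totally-skew-symmetric}. So \( \tilde{T} \) is totally skew-symmetric, and \cref{T:D} applies to \( M \subseteq G \).

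\textbf{Step 3: identify the maps.} Let \( \psi \colon \mathbb{R}^{n+1} \to \mathfrak{g} \) be the linear isometry \( e_i \mapsto e_i \). Since \( \tilde{E}_i|_p = \mathrm{d} L_p (e_i) \), writing \( N = \sum_i N^i \tilde{E}_i \) gives
\[
\bar{g} = \psi \circ \hat{g}.
\]
The Laplacian of a map into a vector space with its flat connection commutes with the constant linear map \( \psi \), so \( \Lap \bar{g} = \psi \Lap \hat{g} \). I take the Laplacian on \( M \) with respect to the Levi-Civita connection, but \cref{T:D} gives \( \Lap \hat{g} = \bar{\Lap} \hat{g} \), so the choice does not matter.

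At \( p \), the map \( \phi \) sends a tangent vector to its coefficient vector in \( (\tilde{E}_i|_p) \). Hence \( \psi \circ \phi = \mathrm{d} L_p^{-1} \) on \( T_p G \). Here \( \phi \) is understood on all of \( T_p \tilde{M} \), as in the remark following \cref{T:A}, where \( - \phi^{-1} \Lap \hat{g} \) is read as an ambient vector.

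\textbf{Step 4: conclude.} \cref{T:D} gives \( - \phi^{-1} \Lap \hat{g} = \Grad H + \| W \|^2 N \). Applying \( \psi \circ \phi = \mathrm{d} L_p^{-1} \) yields
\[
- \Lap \bar{g}|_p = \mathrm{d} L_p^{-1} (\Grad H) + \| W \|^2 \, \bar{g}(p).
\]
Finally, substitute \( H = \bar{H} \) and \( \| W \|^2 = \| \bar{W} \|^2 + \widetilde{\Ric}(N,N) \), both from \cref{T:D}.

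The only point needing care is the bookkeeping in Step 3: checking that the several identifications (\( \phi \), \( \psi \), and the left translation) compose exactly to \( \mathrm{d} L_p^{-1} \), and that the sign convention \( \tilde{T} = -[\cdot,\cdot] \) is immaterial for total skew-symmetry. The rest follows directly from \cref{T:D}.
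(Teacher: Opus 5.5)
Your proposal is correct and follows the paper's proof. Both equip $G$ with the Weit\-zen\-b{\"o}ck structure of a left-invariant oriented orthonormal frame, apply \cref{T:D} to $\hat{g}$, and transport the identity by $\mathrm{d} L_p^{-1} \circ \phi^{-1}$ (your $\psi$), using $\bar{g} = \psi \circ \hat{g}$ and $\mathrm{d} L_p^{-1}(N|_p) = \bar{g}(p)$. You also give more detail than the paper: it simply asserts that the torsion is parallel and totally skew-symmetric, and it leaves implicit that $\Lap \hat{g} = \bar{\Lap} \hat{g}$ makes the choice of connection on $M$ irrelevant.
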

\begin{proof}
Let \( (\tilde{E}_1, \dotsc, \tilde{E}_{n+1}) \) be a left-invariant global oriented orthonormal smooth frame for \( G \). Then it defines a Weit\-zen\-b{\"o}ck structure on \( G \) with parallel totally skew-symmetric torsion. By \cref{T:D},
\begin{align}
- \phi^{-1} \Lap \hat{g} & = \Grad \bar{H} + {\big( \| \bar{W} \|^2 + \widetilde{\Ric}(N,N) \big)} N,
\end{align}
where \( \hat{g} \colon M \to \mathbb{R}^{n+1} \). Fix \( p \in M \). Since \( \bar{g} = \sum_{i = 1}^{n+1} \hat{g}^i \tilde{E}_i |_1 \) on \( M \), applying \( \mathrm{d} L_p^{-1} \) yields \( \mathrm{d} L_p^{-1} \phi^{-1} \Lap \hat{g} |_p = \Lap \bar{g} |_p \in \mathfrak{g} \) and \( \mathrm{d} L_p^{-1} (N |_p) = \bar{g}(p) \in \mathfrak{g} \).
\end{proof}

Now, consider surfaces. For an oriented Riemannian \( 3 \)-manifold, every totally skew-symmetric smooth \( (1,2) \)-tensor field is proportional to the \emph{cross product} by \cref{L:cross-product} below, which is the totally skew-symmetric smooth \( (1,2) \)-tensor field defined by \( X \times Y = \mathrm{d} \vol ({{}\cdot{}}, X, Y)^{\sharp} \) for all smooth vector fields \( X \) and \( Y \).

\begin{plem} \label{L:cross-product}
Let \( A \) be a smooth\/ \( (1,2) \)-tensor field on an oriented Riemannian\/ \( 3 \)-manifold \( M \). Then \( A \) is totally skew-symmetric iff there exists a smooth function \( f \colon M \to \mathbb{R} \) such that \( A(X,Y) = f X \times Y \) for all \( X, Y \in \Gamma(TM) \).
\end{plem}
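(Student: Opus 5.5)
The plan is to reduce to pointwise linear algebra on the oriented $3$-dimensional inner product space $V = T_p M$ and then handle smoothness separately. The ``if'' direction is immediate. Suppose $A(X,Y) = f\, X \times Y$. Skew-symmetry in $(X,Y)$ is clear. The trilinear form $\langle X, A(Y,Z) \rangle = f\, \mathrm{d}\vol(X,Y,Z)$ is alternating, so $A$ is totally skew-symmetric by condition~(b) of \cref{D:totally-skew-symmetric}.

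For the ``only if'' direction I will use the alternation characterization~(b) of \cref{D:totally-skew-symmetric}. If $A$ is totally skew-symmetric, then $A^{\flat}(X,Y,Z) = \langle X, A(Y,Z) \rangle$ is an alternating smooth $3$-form on $M$. Since $M$ is oriented of dimension three, $\Lambda^3 T^*M$ is a trivial line bundle spanned by the nowhere-vanishing form $\mathrm{d}\vol$. Hence there is a unique function $f$ with $A^{\flat} = f\, \mathrm{d}\vol$.

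Next I will check that $f$ is smooth. Locally, take an oriented orthonormal smooth frame $(E_1,E_2,E_3)$. Then $f = \langle E_1, A(E_2,E_3)\rangle$, which is smooth, and the value does not depend on the choice of oriented orthonormal frame because both $A^{\flat}$ and $\mathrm{d}\vol$ are global tensors. So $f$ is a globally defined smooth function.

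Finally, for all $X, Y, Z$ we have
\[
\langle Z, A(X,Y)\rangle = f\,\mathrm{d}\vol(Z,X,Y) = \langle Z, f\, X \times Y\rangle,
\]
by the definition $X \times Y = \mathrm{d}\vol(\cdot, X, Y)^{\sharp}$. Nondegeneracy of the metric then gives $A(X,Y) = f\, X\times Y$.

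There is no real obstacle here. The only points needing care are that the alternation condition is equivalent to total skew-symmetry, which the paper takes as given in \cref{D:totally-skew-symmetric}, and that $f$ is globally well defined and smooth. Both follow from $\dim \Lambda^3 V = 1$ and the existence of the global volume form.
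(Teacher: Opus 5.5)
Your proposal is correct and takes the same approach as the paper. The paper's one-line proof also uses the alternation condition to see that $A^{\flat}$ is a $3$-form, so $A^{\flat} = f\,\mathrm{d}\vol$ for a smooth $f$; your smoothness check in an oriented orthonormal frame, the nondegeneracy step, and the converse direction fill in details the paper leaves implicit.
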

\begin{proof}
If \( A \) is totally skew-symmetric, then \( A^{\flat} \in \Omega^3(M,\mathbb{R}) \) (\cref{D:totally-skew-symmetric}), and thus there is a smooth function \( f \colon M \to \mathbb{R} \) such that \( A^{\flat} = f \, \mathrm{d} \vol \).
\end{proof}

For an oriented Riemann-Cartan \( 3 \)-manifold with totally skew-symmetric torsion, the curvature is given as follows.

\begin{pprop} \label{P:curvature}
Let \( M \) be an oriented Riemann-Cartan\/ \( 3 \)-manifold with totally skew-symmetric torsion \( f {\times} \). For all \( X, Y, Z \in \Gamma(TM) \),
\begin{align}
R(X,Y) Z & = \bar{R}(X,Y) Z + \tfrac{1}{4} f^2 (X \times Y) \times Z + \tfrac{1}{2} (X(f) Y - Y(f) X) \times Z, \\
\Ric(X,Y) & = \overline{\Ric}(X,Y) - \tfrac{1}{2} f^2 \langle X, Y \rangle + \tfrac{1}{2} \, \mathrm{d} f (X \times Y).
\end{align}
\end{pprop}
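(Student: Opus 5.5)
The plan is to specialize the general contorsion formula \eqref{E:curvature-contorsion} to this situation. Since the torsion is totally skew-symmetric, \eqref{E:contorsion-torsion} gives contorsion \( C = \tfrac12 T \), so \( C(X,Y) = \tfrac12 f\, X \times Y \). Substituting into \eqref{E:curvature-contorsion} splits \( R(X,Y)Z - \bar R(X,Y)Z \) into two groups of terms:
\begin{itemize}
\item the derivative terms \( (\bar\del_X C)(Y,Z) - (\bar\del_Y C)(X,Z) \);
\item the quadratic terms \( C(X,C(Y,Z)) - C(Y,C(X,Z)) \).
\end{itemize}
I will treat each group separately.

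For the derivative terms, the key fact is that the cross product is \( \bar\del \)-parallel. This holds because \( X\times Y = \mathrm{d}\vol(\cdot,X,Y)^\sharp \), and both \( \mathrm{d}\vol \) and the musical isomorphism are parallel for the Levi-Civita connection. Hence \( (\bar\del_X C)(Y,Z) = \tfrac12 X(f)\, Y\times Z \). Antisymmetrizing in \( X,Y \) gives \( \tfrac12 (X(f)Y - Y(f)X)\times Z \).

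For the quadratic terms, we get
\[ \tfrac14 f^2 \bigl( X\times(Y\times Z) - Y\times(X\times Z) \bigr). \]
The Jacobi identity for the cross product (equivalently, the BAC-CAB rule) turns this into \( \tfrac14 f^2 (X\times Y)\times Z \). This proves the curvature formula.

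For the Ricci formula, I take the trace of \( Z \mapsto R(Z,X)Y \) term by term.
\begin{itemize}
\item The \( \bar R \) term gives \( \overline{\Ric}(X,Y) \).
\item Using \( (Z\times X)\times Y = \langle Z,Y\rangle X - \langle X,Y\rangle Z \), the trace of this map is \( \langle X,Y\rangle - 3\langle X,Y\rangle = -2\langle X,Y\rangle \). Multiplying by \( \tfrac14 f^2 \) yields \( -\tfrac12 f^2\langle X,Y\rangle \).
\item The last term is \( \tfrac12\bigl(Z(f)\,X\times Y - X(f)\, Z\times Y\bigr) \). The map \( Z\mapsto Z\times Y \) is skew-adjoint, so it is traceless. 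The map \( Z\mapsto \mathrm{d}f(Z)\,X\times Y \) has trace \( \mathrm{d}f(X\times Y) \). Together these give \( \tfrac12\,\mathrm{d}f(X\times Y) \).
\end{itemize}

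There is no real obstacle here; the computation is routine. The only point needing care is the sign and index conventions: the Jacobi identity must be applied in the right order, and the Ricci trace is taken in the first slot, \( \Ric(X,Y)=\tr(R(\cdot,X)Y) \). I would double-check these conventions against the stated formula.
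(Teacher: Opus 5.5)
Your proposal is correct and takes essentially the same route as the paper. You substitute \( C = \tfrac12 f{\times} \) into \eqref{E:curvature-contorsion}, use that the cross product is \( \bar{\del} \)-parallel for the derivative terms and the Jacobi identity for the quadratic terms, and then take the trace. Your explicit trace computation, with the \( -2\langle X,Y\rangle \) term and the tracelessness of the skew-adjoint map \( Z \mapsto Z \times Y \), correctly supplies the details the paper leaves implicit.
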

\begin{proof}
Fix \( X, Y, Z \in \Gamma(TM) \). We substitute the contorsion in~\eqref{E:curvature-contorsion} with \( C = \frac{1}{2} f {\times} \). Since the cross product is parallel, \( (\bar{\del}_X C) (Y,Z) = \tfrac{1}{2} X(f) Y \times Z \) and likewise with \( X \) and \( Y \) interchanged. Also, by the Jacobi identity, \( C(X, C(Y,Z)) - C(Y, C(X,Z)) \) is equal to \( \tfrac{1}{4} f^2 (X \times Y) \times Z \). The former equality follows. Taking the trace then yields the latter equality.
\end{proof}

\begin{pthrm} \label{T:Schur-lemma}
Let \( M \) be an oriented Riemann-Cartan\/ \( 3 \)-manifold with totally skew-symmetric torsion \( f {\times} \). Then the following three are equivalent.
\begin{enumerate}
\item The torsion of \( M \) is parallel.
\item The Ricci curvature of \( M \) is symmetric.
\item The smooth function \( f \colon M \to \mathbb{R} \) is locally constant.
\end{enumerate}
\end{pthrm}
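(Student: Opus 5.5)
I will prove the cycle (a)\,\(\Rightarrow\)\,(c)\,\(\Rightarrow\)\,(a) and (b)\,\(\Leftrightarrow\)\,(c), using \cref{P:curvature} for the Ricci part and a direct computation of \( \del T \) for the parallelism part.

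First, for (b)\,\(\Leftrightarrow\)\,(c), take the Ricci formula of \cref{P:curvature}. Both \( \overline{\Ric} \) and \( -\tfrac{1}{2} f^2 \langle {{}\cdot{}}, {{}\cdot{}} \rangle \) are symmetric. The remaining term \( \tfrac{1}{2}\,\mathrm{d} f (X \times Y) \) is skew-symmetric in \( (X,Y) \). Hence \( \Ric \) is symmetric iff \( \mathrm{d} f (X \times Y) = 0 \) for all \( X, Y \). Since every tangent vector at a point of an oriented \( 3 \)-manifold is a cross product of two tangent vectors, this holds iff \( \mathrm{d} f = 0 \), i.e., iff \( f \) is locally constant.

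Next, for (a)\,\(\Leftrightarrow\)\,(c), compute \( \del T \) where \( T = f {\times} \). By \eqref{E:totally-skew-symmetric-torsion}, \( \del_X = \bar{\del}_X + \tfrac{1}{2} f X \times {{}\cdot{}} \) on vector fields. Since the cross product is \( \bar{\del} \)-parallel, \( \bar{\del}_X T = X(f) {\times} \). The extra contorsion contribution is
\begin{align*}
\tfrac{1}{2} f^2 \bigl( X \times (Y \times Z) - (X \times Y) \times Z - Y \times (X \times Z) \bigr).
\end{align*}
This vanishes by the Jacobi identity for the cross product. The vanishing can also be seen conceptually: \( \del \) is metric and orientation compatible, so it preserves \( \mathrm{d}\vol \) and therefore the cross product. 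Hence \( (\del_X T)(Y,Z) = X(f)\, Y \times Z \). Thus \( \del T = 0 \) iff \( \mathrm{d} f = 0 \), which is (c). The same conclusion holds for \( \bar{\del} T \), so the statement is unambiguous whichever connection "parallel" refers to.

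The main care point is the claim that \( \del \) preserves the cross product. I would justify it through the Jacobi identity as above, or simply note that a metric connection has holonomy in \( \mathrm{SO}(3) \) on an oriented manifold. Everything else is a direct substitution into \cref{P:curvature}.
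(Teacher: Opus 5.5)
Your proposal is correct and follows the paper's route. The paper likewise uses that the cross product is parallel to get \( (\del_X T)(Y,Z) = X(f)\, Y \times Z \), and reads off the skew part \( \tfrac{1}{2}\,\mathrm{d} f (X \times Y) \) of the Ricci formula in \cref{P:curvature}; your Jacobi-identity check and your remark that the same holds for \( \bar{\del} \) just make explicit what the paper leaves implicit.
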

\begin{proof}
Since the cross product is parallel, the torsion is parallel iff \( \mathrm{d} f (X) Y \times Z = 0 \) for all \( X, Y, Z \in \Gamma(TM) \) iff \( \mathrm{d} f = 0 \). By \cref{P:curvature}, the Ricci curvature is symmetric iff \( \mathrm{d} f (X \times Y) = 0 \) for all \( X, Y \in \Gamma(TM) \) iff \( \mathrm{d} f = 0 \).
\end{proof}

This yields several corollaries.

\begin{pcor}[Schur's lemma for Riemann-Cartan \( 3 \)-manifolds] \label{C:Schur-lemma}
Let \( M \) be an oriented Riemann-Cartan\/ \( 3 \)-manifold. Suppose that there exist two smooth functions \( f_1, f_2 \colon M \to \mathbb{R} \) such that
\begin{align}
\Ric(X,Y) & = f_1 \langle X, Y \rangle, \qquad T(X,Y) = f_2 X \times Y \qquad (X, Y \in \Gamma(TM)). \label{E:Schur-lemma}
\end{align}
Then both \( f_1 \) and \( f_2 \) must be constant on each connected component of \( M \).
\end{pcor}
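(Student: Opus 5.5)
The hypothesis \( T = f_2 {\times} \) says the torsion is totally skew-symmetric with coefficient \( f_2 \), by \cref{L:cross-product}. So \cref{P:curvature,T:Schur-lemma} apply, and the plan is to show first that \( f_2 \) is locally constant, then that \( f_1 \) is.

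For \( f_2 \), I use the Ricci formula of \cref{P:curvature}:
\begin{align*}
\Ric(X,Y) & = \overline{\Ric}(X,Y) - \tfrac{1}{2} f_2^2 \langle X, Y \rangle + \tfrac{1}{2} \, \mathrm{d} f_2 (X \times Y).
\end{align*}
Since \( \Ric = f_1 \langle {{}\cdot{}}, {{}\cdot{}} \rangle \) is symmetric, (b) of \cref{T:Schur-lemma} holds. Its equivalence with (c) then gives \( \mathrm{d} f_2 = 0 \). Directly: the skew part of the right-hand side is \( \tfrac{1}{2} \mathrm{d} f_2 (X \times Y) \), and because the cross product is onto \( TM \) pointwise, its vanishing forces \( \mathrm{d} f_2 = 0 \). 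Hence \( f_2 \) is constant on each connected component.

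For \( f_1 \), with \( \mathrm{d} f_2 = 0 \) the same formula gives \( \overline{\Ric} = (f_1 + \tfrac{1}{2} f_2^2) \langle {{}\cdot{}}, {{}\cdot{}} \rangle \). So the Levi-Civita metric is Einstein with function \( f_1 + \tfrac{1}{2} f_2^2 \). I then invoke the classical Schur lemma for Riemannian manifolds of dimension \( \ge 3 \). Its proof is the contracted second Bianchi identity \( \overline{\Div}\, \overline{\Ric} = \tfrac{1}{2} \mathrm{d} \bar{s} \): with \( \overline{\Ric} = h \langle {{}\cdot{}}, {{}\cdot{}} \rangle \) and scalar curvature \( \bar{s} = 3h \), it gives \( \mathrm{d} h = \tfrac{3}{2} \mathrm{d} h \), so \( \mathrm{d} h = 0 \). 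Since \( f_2 \) is already locally constant, \( f_1 = h - \tfrac{1}{2} f_2^2 \) is locally constant as well.

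The main point to get right is the order of the two steps. Before \( f_2 \) is known to be locally constant, \( \overline{\Ric} \) carries the extra term \( \tfrac{1}{2} \mathrm{d} f_2 (X \times Y) \) and is not visibly pure trace, so the classical Schur lemma cannot be applied. Deriving \( \mathrm{d} f_2 = 0 \) from the symmetry of \( \Ric \) first removes this obstruction. The only other thing to check is that the classical Schur lemma needs dimension \( 3 \ge 3 \), which holds.
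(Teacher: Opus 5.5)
Your proposal is correct and follows the paper's proof: the symmetry of \( \Ric \) together with \cref{T:Schur-lemma} gives \( \mathrm{d} f_2 = 0 \), then \cref{P:curvature} yields \( \overline{\Ric} = (f_1 + \tfrac{1}{2} f_2^2) \langle {{}\cdot{}}, {{}\cdot{}} \rangle \), and the classical Schur lemma finishes. Your additional details (pointwise surjectivity of the cross product, and the contracted Bianchi identity \( \mathrm{d} h = \tfrac{3}{2} \mathrm{d} h \) in dimension three) are accurate expansions of steps the paper cites without proof.
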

\begin{proof}
By \cref{T:Schur-lemma}, \( f_2 \) is locally constant. Then, by \cref{P:curvature}, the Ricci curvature of the Levi-Civita connection is given by \( \overline{\Ric} = {\left( f_1 + \tfrac{1}{2} f_2^2 \right)} \langle {{}\cdot{}}, {{}\cdot{}} \rangle \). By Schur's lemma for Riemannian manifolds, \( f_1 + \frac{1}{2} f_2^2 \) (hence \( f_1 \)) is locally constant.
\end{proof}

\noindent It is worth mentioning that the condition~\eqref{E:Schur-lemma} in \cref{C:Schur-lemma} also appears in Proposition~4.18 of~\cite{MR5011406}.

\begin{pcor} \label{C:tau}
Let \( \tilde{M} \) be a Weit\-zen\-b{\"o}ck\/ \( 3 \)-manifold with totally skew-symmetric torsion \( \tilde{T} \). Then \( \tilde{T} = \lambda {\times} \) for some locally constant function \( \lambda \colon \tilde{M} \to \mathbb{R} \). Moreover, every oriented smooth surface embedded in \( \tilde{M} \) is torsion-free and satisfies\/ \( {\star} \tau = \lambda \).
\end{pcor}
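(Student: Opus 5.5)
The plan has three parts: show that \( \lambda \) is locally constant, show that every surface is torsion-free, and compute \( {\star} \tau \).

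\textbf{Local constancy of \( \lambda \).} By \cref{L:cross-product}, since \( \tilde{T} \) is totally skew-symmetric, there is a smooth function \( \lambda \colon \tilde{M} \to \mathbb{R} \) with \( \tilde{T}(X,Y) = \lambda X \times Y \). Here the cross product is taken with respect to the orientation induced by the frame. A Weit\-zen\-b{\"o}ck manifold is flat, so \( \tilde{R} = 0 \) and hence \( \widetilde{\Ric} = 0 \). In particular \( \widetilde{\Ric} \) is symmetric. Then \cref{T:Schur-lemma}, via the implication (b)\( \Rightarrow \)(c), shows that \( \lambda \) is locally constant. Alternatively, \cref{C:Schur-lemma} applies with \( f_1 = 0 \) and \( f_2 = \lambda \).

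\textbf{Torsion-freeness of surfaces.} Let \( M \) be an oriented surface in \( \tilde{M} \). Since \( \dim \tilde{M} = 3 \) and the torsion is totally skew-symmetric everywhere, \cref{P:torsion-free}(b) shows that \( M \) is torsion-free. This can also be seen directly: for \( X, Y \in \Gamma(TM) \), the vector \( X \times Y \) is normal to \( M \), so \( T(X,Y) = \tilde{T}(X,Y)^{\top} = 0 \).

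\textbf{Computing \( {\star} \tau \).} Take a local oriented orthonormal frame \( (E_1, E_2) \) for \( M \). By the orientation convention of \cref{SS:Riemann-Cartan-hypersurfaces}, \( (N, E_1, E_2) \) is an oriented orthonormal frame for \( \tilde{M} \). Hence \( E_1 \times E_2 = \mathrm{d} \vol(\cdot, E_1, E_2)^{\sharp} = N \). Therefore
\begin{align*}
\tau(E_1, E_2) = \langle N, \tilde{T}(E_1, E_2) \rangle = \lambda \langle N, E_1 \times E_2 \rangle = \lambda.
\end{align*}
Since \( {\star} \tau = \tau(E_1, E_2) \) for the induced metric and orientation on \( M \), this gives \( {\star} \tau = \lambda \).

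The main point needing care is this orientation bookkeeping. One must confirm that the ambient orientation used to define \( \times \) agrees with the one used to define \( N \), which holds because the Weit\-zen\-b{\"o}ck orientation is given by the frame. One must also confirm that the Hodge star convention gives \( {\star}(E^1 \wedge E^2) = 1 \). The other steps are direct applications of the earlier results.
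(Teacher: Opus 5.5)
Your proof is correct and follows essentially the argument the paper intends: the paper gives no written proof and lists this among the corollaries of \cref{T:Schur-lemma}, so flatness gives a symmetric $\widetilde{\Ric}=0$ and hence a locally constant $\lambda$, torsion-freeness comes from \cref{P:torsion-free}, and ${\star}\tau=\lambda\langle N, E_1\times E_2\rangle=\lambda$. Your orientation bookkeeping matches the paper's convention for $N$. As a cross-check, \cref{L:totally-skew-symmetric-torsion}(c) gives $2W^{\mathsf{a}}=\lambda N\times{\cdot}=\lambda J$, which agrees with $2W^{\mathsf{a}}=({\star}\tau)J$.
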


\noindent A consequence of \cref{C:tau} is that a connected Weit\-zen\-b{\"o}ck \( 3 \)-manifold with nonzero totally skew-symmetric torsion does not admit an oriented smooth surface satisfying \( H + \boldsymbol{i} {\star} \tau = 0 \), which is minimal in the sense of~\cite{MR5011406}.

\begin{pcor}
Let \( M \) be a connected oriented smooth surface embedded in a Weit\-zen\-b{\"o}ck\/ \( 3 \)-manifold with totally skew-symmetric torsion. Let \( g \colon M \to \mathbb{S}^2 \) be the Gauss map. Let \( \phi \colon TM \isoto g^* T \mathbb{S}^2 \) be the identification in \cref{L:identification}. Then
\begin{align}
- \phi^{-1} \Lap g & = \Grad H.
\end{align}
Moreover, the following three are equivalent.
\begin{enumerate}
\item The Gauss map \( g \colon M \to \mathbb{S}^2 \) is harmonic.
\item \( H \) (i.e., \( \bar{H} \)) is constant.
\item The Hopf differential \( \varphi \) on \( M \) is a holomorphic quadratic differential.
\end{enumerate}
\end{pcor}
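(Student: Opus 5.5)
The plan is to reduce to \cref{T:D}. By \cref{C:tau}, the ambient torsion is \( \tilde{T} = \lambda {\times} \) with \( \lambda \) locally constant. Since the cross product is parallel for the Levi-Civita connection and \( \mathrm{d}\lambda = 0 \), the torsion is parallel; this is (a) \( \Rightarrow \) (c) of \cref{T:Schur-lemma}, applied to the ambient Riemann-Cartan \( 3 \)-manifold. Hence the hypotheses of \cref{T:D} hold, and we obtain \( - \phi^{-1} \Lap g = \Grad H \) together with \( H = \bar{H} \). Because \( \phi \) is an isomorphism, \( \Lap g = 0 \) iff \( \Grad H = 0 \), and since \( M \) is connected this holds iff \( H \) is constant. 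This gives (a) \( \Leftrightarrow \) (b).

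For (b) \( \Leftrightarrow \) (c), I will use \cref{C:C}. Every surface in \( \tilde{M} \) is torsion-free by \cref{C:tau}, so \cref{C:C} states that (c) holds iff \( H + \boldsymbol{i} {\star} \tau \) is holomorphic. By \cref{C:tau}, \( {\star} \tau = \lambda \) is constant on the connected surface \( M \). A real-valued function plus a constant imaginary part is holomorphic iff the Cauchy-Riemann equations hold, and here they reduce to \( \partial_x H = \partial_y H = 0 \). So holomorphicity of \( H + \boldsymbol{i}\lambda \) is equivalent to \( H \) being constant, which closes the chain.

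As a cross-check, or as an alternative route, one can use \cref{T:B} directly. Since \( M \) is torsion-free, \( - \phi^{-1} \Lap g = \Grad H + J \Grad {\star} \tau \), and \( \Grad {\star} \tau = 0 \), so this again equals \( \Grad H \). This route avoids \cref{T:D} altogether and only needs the first half of \cref{C:tau}.

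The only step needing care is confirming that \cref{C:tau} applies as stated, namely that \( \lambda \) is locally constant. This comes from flatness of the Weitzenb\"ock connection: \( \Ric = 0 \) is symmetric, so \cref{T:Schur-lemma} gives \( f = \lambda \) locally constant. Since \cref{C:tau} is stated earlier, I will take it as given.
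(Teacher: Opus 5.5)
Your proposal is correct and follows essentially the route the paper intends: the paper gives no separate proof, and the corollary comes from \cref{C:tau} together with \cref{T:D} or \cref{T:B} for the Laplacian, and from \cref{C:C} with \( {\star}\tau = \lambda \) constant on the connected surface for the Hopf differential. One small slip: the implication you need from \cref{T:Schur-lemma} is (c) \( \Rightarrow \) (a), not (a) \( \Rightarrow \) (c), and \cref{T:D} needs parallelism with respect to the ambient Weit\-zen\-b{\"o}ck connection, which holds because the cross product is parallel for every orientation-preserving metric connection.
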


Not every parallelizable smooth manifold admits a Weit\-zen\-b{\"o}ck structure with totally skew-symmetric torsion (cf.~\cref{T:torsion-free}). Indeed, the classification of flat Riemann-Cartan manifolds with totally skew-symmetric torsion was established by Cartan and Schouten, Wolf~\cites{MR0312442,MR0312443}, and Agricola and Friedrich~\cite{MR2651537}. Due to the classification, up to universal cover, a connected complete flat Riemann-Cartan manifold with totally skew-symmetric torsion must be the product of Lie groups or the \( 7 \)-sphere \( \mathbb{S}^7 \).

\subsection{HOS' theorem} \label{SS:HOS-theorem}

One of the applications of Ruh and Vilms' theorem is a classification result of Hoffman, Osserman, and Schoen~\cite{MR0694604}. Corresponding analogues have also been studied in many works extending Ruh and Vilms' theorem. In this subsection, we establish two such analogues in the Weit\-zen\-b{\"o}ck setting, which generalize the Euclidean case.

\begin{pthrm} \label{T:HOS1}
Let \( M \) be a connected closed oriented torsion-free smooth hypersurface embedded in a Weit\-zen\-b{\"o}ck\/ \( (n+1) \)-manifold, such that the smooth\/ \( (1,1) \)-tensor field \( HI + 2W^{\mathsf{a}} \) is divergence-free. Then the following two are equivalent.
\begin{enumerate}
\item The image of the Gauss map \( g \colon M \to \mathbb{S}^n \) is contained in a closed hemisphere.
\item The image of the Gauss map \( g \colon M \to \mathbb{S}^n \) is contained in an equator.
\end{enumerate}
\end{pthrm}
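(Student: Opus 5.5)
The direction (b)\(\Rightarrow\)(a) is trivial, since an equator is contained in every closed hemisphere it bounds. For (a)\(\Rightarrow\)(b), the plan is to run the classical Hoffman--Osserman--Schoen argument with the scalar height function of the Gauss map. Assume \( g[M] \subseteq \{ x \in \mathbb{S}^n : \langle x, a \rangle \ge 0 \} \) for some fixed unit vector \( a \in \mathbb{R}^{n+1} \), and set \( u = \langle \hat{g}, a \rangle \colon M \to \mathbb{R} \), where \( \hat{g} = \hat{\iota} \circ g \). Then \( u \ge 0 \) on \( M \), and the goal is to show \( u \equiv 0 \), which says exactly that \( g[M] \) lies in the equator \( a^{\perp} \cap \mathbb{S}^n \).

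The key step is to compute \( \Lap u \). Since \( \langle {{}\cdot{}}, a \rangle \) is a linear map \( \mathbb{R}^{n+1} \to \mathbb{R} \), and the Hessian (hence Laplacian) commutes with composition by a constant linear map into flat \( \mathbb{R} \), we get \( \Lap u = \langle \Lap \hat{g}, a \rangle \). Here \( \Lap \) is the induced-connection Laplacian on \( M \); because \( M \) is torsion-free, it coincides with the Levi-Civita Laplacian (\cref{P:Hessian-Laplacian}). By the remark after \cref{T:A} (equivalently \cref{T:Laplacian} combined with \eqref{E:Laplacian}),
\begin{align*}
- \phi^{-1} \Lap \hat{g} = - \phi^{-1} \Lap g + \| W \|^2 N = \Div(HI + 2W^{\mathsf{a}}) + \| W \|^2 N = \| W \|^2 N,
\end{align*}
using \cref{T:A} for torsion-free \( M \) and the divergence-free hypothesis. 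Here \( \phi \) is extended to the full isometry \( T_p \tilde{M} \to \mathbb{R}^{n+1} \) with \( \phi(N) = g \). Hence \( \Lap \hat{g} = - \| W \|^2 \hat{g} \), and so
\begin{align*}
\Lap u = - \| W \|^2 u \le 0 .
\end{align*}
Thus \( u \) is a nonnegative superharmonic function for the Levi-Civita Laplace--Beltrami operator on the closed Riemannian manifold \( M \).

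To conclude, since \( M \) is closed, integrating gives \( \int_M \| W \|^2 u \, \mathrm{d}\vol = - \int_M \bar{\Lap} u \, \mathrm{d}\vol = 0 \). Alternatively, the strong maximum principle for \( \bar{\Lap} u + \| W \|^2 u = 0 \) with \( u \ge 0 \) shows that \( u \equiv 0 \) or \( u > 0 \) on the connected \( M \). In the latter case the integral identity forces \( W \equiv 0 \), so \( M \) is totally geodesic and \( g \) is constant. A closed hypersurface cannot have constant Gauss map, but that geometric fact is not available intrinsically here, and this is the step I expect to be the main obstacle.

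My plan for it is to use that \( \phi W = - \mathrm{d} g \) (\cref{P:identification}): if \( W = 0 \) then \( \mathrm{d}\hat{g} = 0 \), so \( g \) is constant, say \( g \equiv b \). The hemisphere condition by itself does not prevent this. So in this case I would show that the image still lies in some equator, namely any great sphere through \( b \), and interpret (b) as ``contained in \emph{an} equator'' rather than the boundary of the given hemisphere. If instead the intended statement needs the equator to bound the given hemisphere, I would check against the paper whether a constant Gauss map must be excluded, e.g. by an orientation or degree argument on the closed embedded \( M \). With that case handled, \( u \equiv 0 \) gives (b).
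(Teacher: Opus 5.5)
Your proposal is correct and is essentially the paper's proof. The paper likewise derives \( \Lap \langle V, \hat{g} \rangle = -\| W \|^2 \langle V, g \rangle \) from \cref{T:A} together with the torsion-free and divergence-free hypotheses, uses closedness to make this function constant, and splits into the cases \( \langle V, g \rangle \equiv 0 \) or \( W \equiv 0 \); in the latter case \cref{P:identification} makes the Gauss map constant, and a point lies in some equator.

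Your reading of (b) as ``some equator'' is the right one, and it cannot be strengthened, so no orientation or degree argument is needed or possible. The constant case really occurs: the flat torus \( T^n \times \{0\} \) in the Weitzenb\"ock torus \( T^{n+1} \) with its standard frame has constant Gauss map \( e_{n+1} \). That point does not lie on the boundary of the hemisphere \( \{ x_{n+1} \ge 0 \} \).
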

\begin{proof}
Suppose that there exists \( V \in \mathbb{S}^n \) such that \( \langle V, g(p) \rangle \le 0 \) for all \( p \in M \). Let \( \hat{g} = \hat{\iota} \circ g \colon M \to \mathbb{R}^{n+1} \), where \( \hat{\iota} \colon \mathbb{S}^n \injto \mathbb{R}^{n+1} \) is the inclusion. By \cref{T:A},
\begin{align}
\Lap \langle V, \hat{g} \rangle & = \langle V, \Lap \hat{g} \rangle = \langle V, - \| W \|^2 \phi N \rangle = - \| W \|^2 \langle V, g \rangle \ge 0,
\end{align}
so that \( \langle V, \hat{g} \rangle \colon M \to \mathbb{R} \) is subharmonic. Since \( M \) is closed, it must be constant. It follows that either \( \langle V, g \rangle = 0 \) everywhere or \( W = 0 \) everywhere. In the former case, the image \( g[M] \) is contained in the equator \( {\left\{ x \in \mathbb{S}^n \, \middle| \, \langle V, x \rangle = 0 \right\}} \). In the latter case, the Gauss map is constant by \cref{P:identification}.
\end{proof}

\begin{pthrm} \label{T:HOS2}
Let \( M \) be a connected complete oriented torsion-free smooth surface embedded in a Weit\-zen\-b{\"o}ck\/ \( 3 \)-manifold, such that \( H + \boldsymbol{i} {\star} \tau \) is a holomorphic function on \( M \). Then the following two are equivalent.
\begin{enumerate}
\item The image of the Gauss map \( g \colon M \to \mathbb{S}^2 \) is contained in a closed hemisphere.
\item The image of the Gauss map \( g \colon M \to \mathbb{S}^2 \) is contained in an equator.
\end{enumerate}
Furthermore, if either holds, \( M \) must be a flat Riemannian surface, and hence isometric to either the plane\/ \( \mathbb{R}^2 \), a cylinder\/ \( \mathbb{R}^2 / a \mathbb{Z} \) for some nonzero \( a \in \mathbb{R}^2 \), or a torus\/ \( \mathbb{R}^2 / (a \mathbb{Z} + b \mathbb{Z}) \) for some basis\/ \( \{ a, b \} \subseteq \mathbb{R}^2 \).
\end{pthrm}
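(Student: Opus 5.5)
The plan is to reduce to a maximum-principle argument for a height function of the Gauss map, as in the proof of \cref{T:HOS1}, and to replace compactness by a parabolicity statement coming from completeness. The implication (b)\(\Rightarrow\)(a) is trivial, so assume there is \( V \in \mathbb{S}^2 \) with \( u = \langle V, \hat{g} \rangle \le 0 \) on \( M \). Since \( M \) is torsion-free and \( H + \boldsymbol{i} {\star} \tau \) is holomorphic, \cref{C:C} shows that \( g \) is harmonic. Then \cref{T:Laplacian} together with~\eqref{E:Laplacian} gives
\begin{align}
\Lap u = - \| W \|^2 u .
\end{align}
Because \( M \) is torsion-free, \( \del \) is the Levi-Civita connection of the induced metric, so \( \Lap u \) is the ordinary Riemannian Laplacian.

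\emph{Curvature input.} The ambient manifold is flat and \( M \) is torsion-free. Hence the Gauss equation (the tangential part of \( \tilde{R} = 0 \), obtained exactly as in \cref{P:Codazzi-equation}) reads \( R(X,Y)Z = \langle WY, Z \rangle WX - \langle WX, Z \rangle WY \). It follows that the Gaussian curvature of \( M \) is \( K = \det W = K^{\mathrm{e}} \). For a \( 2 \times 2 \) matrix \( \begin{pmatrix} a & b \\ c & d \end{pmatrix} \) one has
\begin{align}
\| W \|^2 - 2 \det W = (a - d)^2 + (b + c)^2 \ge 0,
\end{align}
so \( \| W \|^2 = 2K + P \) with \( P \ge 0 \).

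\emph{Parabolicity via Fischer-Colbrie--Schoen (main step).} First suppose \( u < 0 \) everywhere. Then \( w = -u > 0 \) satisfies \( \Lap w + (2K + P) w = 0 \) on the complete surface \( M \). The key step, and the one I expect to need the most care, is to invoke the theorem of Fischer-Colbrie and Schoen: if a complete Riemannian surface carries a positive solution of \( \Lap w - aKw + Pw = 0 \) (their sign conventions to be checked against ours) with \( P \ge 0 \) and suitable \( a \), then \( M \) is conformally the plane, a cylinder, or a torus, and in particular parabolic. I must verify that the coefficient \( a \) that arises here, namely \( 2K \) appearing with this sign, falls inside their admissible range; if it does not, I would instead rewrite the equation for \( \log w \) or for a power \( w^{\alpha} \) so as to match their hypotheses. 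Granting parabolicity, \( w \) is a positive superharmonic function on a parabolic surface and is therefore constant. Then \( \| W \|^2 w = 0 \) forces \( W = 0 \), so \( g \) is constant by \cref{P:identification}, and its image lies in an equator.

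\emph{Boundary case and conclusion.} If instead \( u \) vanishes somewhere, then \( u \le 0 \) and \( \Lap u + \| W \|^2 u = 0 \), so the strong maximum principle (Hopf) applied to this equation with \( u \) attaining its maximum \( 0 \) gives \( u \equiv 0 \). In every case the image \( g[M] \) lies in an equator, which proves (a)\(\Rightarrow\)(b). For the final claim, if \( g[M] \) lies in a great circle, then \( \mathrm{d}g \) has rank at most \( 1 \). By \cref{P:identification}, \( \det W = 0 \), so \( K = 0 \) and \( M \) is a flat surface with torsion-free, hence Levi-Civita, connection, that is, a flat Riemannian surface. The classification of complete flat surfaces then gives the plane, a cylinder \( \mathbb{R}^2 / a \mathbb{Z} \), or a torus \( \mathbb{R}^2 / (a \mathbb{Z} + b \mathbb{Z}) \).
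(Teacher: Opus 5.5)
The step you flagged is where the proposal fails, and the problem is not bookkeeping. With the splitting \( \| W \|^2 = 2K + P \), your equation \( \Lap w + (2K + P) w = 0 \) is the Fischer-Colbrie--Schoen equation \( \Lap w - aKw + Pw = 0 \) with \( a = -2 \). Their conformal-type result is for a positive constant \( a \); the paper uses it with \( a = 2 \). For negative \( a \), the implication ``positive solution \( \Rightarrow \) parabolic'' is false. On the hyperbolic plane, in the upper half-plane model, \( K = -1 \) and \( w = y^2 \) satisfies \( \Lap w = y^2 \partial_y^2 (y^2) = 2w \), i.e.\ \( \Lap w + 2Kw = 0 \) with \( P = 0 \), yet the surface is complete and conformal to the disc. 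So your identity cannot yield parabolicity. Passing to \( \log w \) or \( w^{\alpha} \) does not fix this either: the issue is how the potential \( \| W \|^2 \) is split, not the choice of unknown.

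The missing idea is the companion identity \( \| W \|^2 + 2 \det W = (\tr W)^2 + ({\star} \tau)^2 \). You can verify it on your \( 2 \times 2 \) matrix just as you verified \( \| W \|^2 - 2 \det W \ge 0 \), using \( 2W^{\mathsf{a}} = ({\star} \tau) J \). It gives \( \| W \|^2 = |H + \boldsymbol{i} {\star} \tau|^2 - 2K \), hence \( \Lap w - 2Kw + |H + \boldsymbol{i} {\star} \tau|^2 w = 0 \). This is the admissible case \( a = 2 \) with \( P = |H + \boldsymbol{i} {\star} \tau|^2 \ge 0 \).

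This is the paper's route. The paper lifts \( u \) to the universal cover \( \hat{M} \) and uses uniformization. If \( \hat{M} \) is conformally the sphere or the plane, the lift of \( u \) is subharmonic and bounded above, hence constant. If \( \hat{M} \) is the disc, Corollary~3 of Fischer-Colbrie--Schoen applied to this equation shows that \( u \) cannot be negative everywhere, so it vanishes somewhere and hence identically. With the corrected identity, your ordering is the same argument: first parabolicity of \( \hat{M} \) from the positive solution, then Liouville for the positive superharmonic lift of \( w \). Working on \( \hat{M} \) rather than on \( M \) also spares you from checking which topological types their statement covers.

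The remaining steps are fine. The case where \( u \) vanishes somewhere follows from the maximum principle for the subharmonic function \( u \le 0 \). Your flatness argument is a valid alternative to the paper's construction of the parallel frame \( (E, JE) \): if \( g[M] \) lies in a great circle, then \( \mathrm{rank}\, W \le 1 \) by \cref{P:identification}, so \( K = \det W = 0 \) by the Gauss equation.
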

\begin{proof}
Suppose that there exists \( V \in \mathbb{S}^2 \) such that \( \langle V, g(p) \rangle \le 0 \) for all \( p \in M \). Let \( \hat{g} = \hat{\iota} \circ g \colon M \to \mathbb{R}^3 \), where \( \hat{\iota} \colon \mathbb{S}^2 \injto \mathbb{R}^3 \) is the inclusion. Consider the lift \( \langle V, \hat{g} \rangle \colon \hat{M} \to \mathbb{R} \) to the universal cover \( \hat{M} \) of \( M \). By \cref{T:B},
\begin{align}
\Lap \langle V, \hat{g} \rangle & = \langle V, \Lap \hat{g} \rangle = \langle V, - \| W \|^2 \phi N \rangle = - \| W \|^2 \langle V, g \rangle \ge 0,
\end{align}
so that \( \langle V, \hat{g} \rangle \colon \hat{M} \to \mathbb{R} \) is bounded and subharmonic. By the uniformization theorem, \( \hat{M} \) is conformally equivalent to either the sphere, the plane, or the unit disc.

In the case of the sphere or the plane, \( \langle V, \hat{g} \rangle \colon \hat{M} \to \mathbb{R} \) must be constant. It follows that either \( \langle V, g \rangle = 0 \) everywhere or \( W = 0 \) everywhere. In the former case, the image \( g[M] \) is contained in the equator \( {\left\{ x \in \mathbb{S}^2 \, \middle| \, \langle V, x \rangle = 0 \right\}} \). In the latter case, the Gauss map is constant by \cref{P:identification}. In either case, (b) follows. Consider the case of the unit disc. Since \( \det W \) is equal to the intrinsic Gaussian curvature \( K \) of \( M \) (Proposition~3.8 of~\cite{MR5011406}), a direct computation yields \( \| W \|^2 = |H + \boldsymbol{i} {\star} \tau|^2 - 2K \), and thus the nonpositive smooth function \( f = \langle V, \hat{g} \rangle \colon \hat{M} \to \mathbb{R} \) satisfies
\begin{align}
\Lap f - 2K f + |H + \boldsymbol{i} {\star} \tau|^2 f & = 0.
\end{align}
By Corollary~3 of~\cite{MR0562550}, \( f = 0 \) at some point in \( \hat{M} \), and hence \( f = 0 \) everywhere. Then the image \( g[M] \) is contained in the equator \( {\left\{ x \in \mathbb{S}^2 \, \middle| \, \langle V, x \rangle = 0 \right\}} \). (b) follows.

Now, to show that \( M \) is flat, suppose~(b). Fix \( V \in \mathbb{S}^2 \) such that \( \langle V, g(p) \rangle = 0 \) for all \( p \in M \). Let \( E = V^1 \tilde{E}_1 + V^2 \tilde{E}_2 + V^3 \tilde{E}_3 \) be the unit smooth vector field, where \( (\tilde{E}_1, \tilde{E}_2, \tilde{E}_3) \) is the ambient global smooth frame. Since \( \langle N, E \rangle = \langle g, V \rangle = 0 \), it is tangent to \( M \). Then \( (E, JE) \) is a parallel global oriented orthonormal smooth frame for \( M \), where \( J \) is the almost complex structure on \( M \). This implies that the curvature of \( M \) vanishes. The classification of a connected complete flat Riemannian surface is standard. See, e.g., Section~2.5 of~\cite{MR2742530}.
\end{proof}

\begin{pcor}[Theorem~1 of~\cite{MR0694604}] \label{C:HOS}
Let \( M \) be a connected complete oriented CMC smooth surface embedded in\/ \( \mathbb{R}^3 \). If the image of the Gauss map \( g \colon M \to \mathbb{S}^2 \) is contained in a closed hemisphere, then \( M \) is either a plane or a cylinder.
\end{pcor}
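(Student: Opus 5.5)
The plan is to obtain this as a direct specialization of \cref{T:HOS2}, using \cref{C:Hopf} to convert the CMC hypothesis into the hypothesis needed there. Regard \( \mathbb{R}^3 \) as the Weit\-zen\-b{\"o}ck \( 3 \)-manifold given by the standard global frame \( (\partial_1, \partial_2, \partial_3) \). Its Weit\-zen\-b{\"o}ck connection is the flat Levi-Civita connection, so the ambient torsion \( \tilde{T} \) vanishes identically. Hence \( \tau = \langle N, \tilde{T} \rangle = 0 \), which gives \( {\star} \tau = 0 \). Also \( T = \tilde{T}^{\top} = 0 \), so \( M \) is torsion-free, and the induced connection on \( M \) is the Levi-Civita connection. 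Therefore \( H + \boldsymbol{i} {\star} \tau = H \). Since \( M \) is CMC, this is a constant function, and in particular holomorphic. All hypotheses of \cref{T:HOS2} are then satisfied.

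Next, assume the image of \( g \) lies in a closed hemisphere. By \cref{T:HOS2}, the image lies in an equator \( \{ x \in \mathbb{S}^2 \mid \langle V, x \rangle = 0 \} \), and \( M \) is flat. It remains to identify \( M \) as a surface in \( \mathbb{R}^3 \), not merely up to intrinsic isometry. Since \( \langle N, V \rangle = 0 \), the constant vector field \( V \) is tangent to \( M \) everywhere. We have \( \tilde{\del} V = 0 \), and \( W = -\tilde{\del} N \) gives \( \langle WV, Y \rangle = \II(V,Y) = \langle N, \tilde{\del}_V Y \rangle \). The symmetry of \( \II \) (as \( \tau = 0 \)) yields \( \II(Y,V) = \langle N, \tilde{\del}_Y V \rangle = 0 \). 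Hence \( WV = 0 \), and \( V \) is a principal direction with principal curvature \( 0 \).

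Consequently \( K = \det W = 0 \). The remaining principal curvature, in the direction \( JV \), equals \( H \), which is constant. There are two cases.
\begin{enumerate}
\item If \( H = 0 \), then \( W = 0 \). The Gauss map is constant by \cref{P:identification}, and a connected complete surface with constant normal is a plane.
\item If \( H \ne 0 \), the integral curves of \( V \) are straight lines in \( \mathbb{R}^3 \), because \( V \) is a constant vector. So \( M \) is ruled by parallel lines in direction \( V \), and completeness makes each line a full line in \( M \). The cross-section of \( M \) by a plane orthogonal to \( V \) is a plane curve. Its curvature is the normal curvature in direction \( JV \), namely \( H \), which is constant. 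Being complete and connected, this cross-section is a circle of radius \( 1/|H| \), so \( M \) is a circular cylinder.
\end{enumerate}

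The main obstacle is this last step, the passage from the intrinsic conclusion of \cref{T:HOS2} to the extrinsic classification. The key points there are that \( V \) lies in the kernel of \( W \), which needs the symmetry of \( \II \) available since \( \tau = 0 \), and the argument that \( M \) is a cylinder over a constant-curvature plane curve. Everything before that reduces immediately to results already stated in the paper.
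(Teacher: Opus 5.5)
Your proposal is correct and follows the paper's route: the corollary comes from specializing \cref{T:HOS2} to $\mathbb{R}^3$ with its standard frame, where $\tilde{T} = 0$ gives $\tau = 0$, makes $M$ torsion-free, and turns $H + \boldsymbol{i} {\star} \tau = H$ into a constant, hence holomorphic, function. Your extrinsic finishing step is sound: $WV = 0$ for the constant tangent field $V$, so $K = 0$ and the principal curvature along $JV$ is the constant $H$, and the normal section orthogonal to $V$ then gives a plane or a round cylinder. This usefully makes explicit what the paper leaves implicit, since \cref{T:HOS2} only identifies $M$ up to intrinsic isometry and does not by itself rule out the flat torus or determine the embedding.
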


\subsubsection*{Acknowledgements}

The author has been supported by a KIAS Individual Grant (MG108901) at Korea Institute for Advanced Study.

\end{document}